\crefname{equation}{}{}
\crefname{lemma}{Lemma}{Lemmas}
\crefname{theorem}{Theorem}{Theorems}
\crefname{discr}{Discretization}{Discretizations}
\DeclareMathOperator{\D}{D}
\apptocmd{\sloppy}{\hbadness 10000\relax}{}{}
\newcommand{\dual}[1]{( {#1} )}
\newcommand{\nm}[1]{\lVert {#1} \rVert}
\newcommand{\snm}[1]{\lvert {#1} \rvert}
\newcommand{\ssnm}[1]
{
	\left\vert\kern-0.25ex
	\left\vert\kern-0.25ex
	\left\vert
	{#1}
	\right\vert\kern-0.25ex
	\right\vert\kern-0.25ex
	\right\vert
}
\def\spher@harm#1{%
	\vbox{\hbox{%
			\offinterlineskip
			\valign{&\hb@xt@2\p@{\hss$##$\hss}\vskip.2ex\cr#1\crcr}%
		}\vskip-.36ex}%
}
\def\gshone{\spher@harm{.}}
\def\gshtwo{\spher@harm{.&.}}
\def\gshthree{\spher@harm{.&.&.}}
\let\gsh\spher@harm
\newtheorem{lemma}{Lemma}[section]
\newtheorem{remark}{Remark}[section]
\newtheorem{theorem}{Theorem}[section]
\def\@captype{table}\makeatother
\begin{document}
	\title{
		\Large \bf Discontinuous Galerkin method for  a distributed optimal control problem governed by a
		time fractional diffusion equation
		\thanks
		{Tao Wang was supported in part by the China Postdoctoral Science Foundation (2019M66294),
			Binjie Li was supported in part by the National Natural Science
			Foundation of China (11901410), and Xiaoping Xie
			was supported in part by the National Natural Science Foundation of
			China  (11771312).
		}
	}
	\author[1]{Tao Wang\thanks{ tao\_wang@m.scnu.edu.cn; wangtao5233@hotmail.com}}
	\author[2]{Binjie Li\thanks{Corresponding author: libinjie@scu.edu.cn}}
	\author[2]{Xiaoping Xie\thanks{xpxie@scu.edu.cn}}
	\affil[1]{South China Research Center for Applied Mathematics and Interdisciplinary Studies, South China Normal University, Guangzhou 510631, China}
	\affil[2]{School of Mathematics, Sichuan University, Chengdu 610064, China}

	\date{}
	\maketitle
	\begin{abstract}
		This paper is devoted to  the numerical analysis of  a control constrained distributed optimal control problem subject to
		a time fractional diffusion equation with non-smooth initial data. The solutions of state and co-state  are decomposed into singular  and regular parts, and some growth estimates are obtained for the singular parts.  By following the variational discretization concept,   a full discretization is applied to the corresponding state and co-state equations by using linear conforming finite element method in space and piecewise constant discontinuous Galerkin method in time.  Error estimates are  derived by employing the  growth estimates. In particular, graded temporal grids are adopted
		to obtain the first-order temporal accuracy.  Finally, numerical experiments are performed to verify the theoretical results.

	\end{abstract}

	\medskip\noindent{\bf Keywords:} distributed optimal control,   time fractional diffusion equation, growth estimate, finite element, discontinuous Galerkin method, error estimate.

	\section{Introduction}
	Let $ \Omega \subset \mathbb R^d $ ($d=1,2,3$) be a convex polytope, and assume
	that $ 0 < \alpha < 1 $, $ -\infty < u_* < u^* < \infty $, and $ 0 < \nu, T <
	\infty $. We consider the following distributed optimal control problem:
	\begin{equation}
	\label{eq:model}
	\min\limits_{
		\substack{
			u \in U_{\text{ad}} \\
			y \in L^2(0,T;L^2(\Omega))
		}
	} J(y,u) = \frac12 \nm{y - y_d}_{L^2(0,T;L^2(\Omega))}^2 +
	\frac\nu2 \nm{u}_{L^2(0,T;L^2(\Omega))}^2,
	\end{equation}
	subject to the state equation
  \begin{equation}
    \label{eq:state}
    \begin{cases}
      \big( \D_{0+}^\alpha (y-y_0) \big)(t) - \Delta y(t) = u(t),
      & 0 < t \leqslant T, \\
      y(0) = y_0.
    \end{cases}
  \end{equation}
	Here, $ \Delta $ is the realization of the Laplace operator with homogeneous
	Dirichlet boundary condition in $ L^2(\Omega) $, $ \D_{0+}^\alpha $ is the
	Riemann-Liouville fractional differential operator of order $ \alpha $, $ y_0
	\in L^2(\Omega) $ and $ y_d \in L^2(0,T;L^2(\Omega)) $ are given, and
	\begin{align*}
	U_{\text{ad}} &:= \left\{
	v \in L^2(0,T;L^2(\Omega)):\
	u_* \leqslant v \leqslant u^*
	\text{ a.e.~in } \Omega \times (0,T)
	\right\}.
	\end{align*}

	The optimal control problem \cref{eq:model} subject to an elliptic or heat
	equation is a classic problem, which has been thoroughly studied both in
	theoretical and numerical aspects; see, e.g. \cite{M2008,lion71,
		troltzsch2010optimal} for theoretical analysis and \cite{
		Gongs16ada,Dmitriy2013Optimal,Li02ada, Wang-Y-X2019, Wang-Y-X2019a} for
	finite element analysis. In general, there are mainly two discretization
	concepts for this problem (\cite{R2006Optimal,M2008}): direct and variational discretizations. The
	difference between these two concepts is that in the variational discretization,
	the control is implicitly discretized by the $L^2$ projection of the discrete co-state into the admissible
	set $U_{ad}$. Since the control may has singularity near the boundary of active
	set, the variational discretization is easier to obtain high accuracy than the
	direct discretization. However, it should be pointed out that the resultant
	discrete system of variational discretization is generally more difficult to
	solve \cite{Hinze2009Variational,R2017Mass}.

	The state equation $\cref{eq:state}$ is a fractional diffusion equation, which is used to model some physical processes like subdiffusion \cite{bouchaud1990anomalous} and water movement in soils \cite{Schumer2009Fractional}. There are many   methods to solve this equation, including finite difference methods \cite{Gao2011,Jin2016,Langlands2005, Lin2007,Lubich1996,  
		Yuste2005,Zeng2013},   spectral methods \cite{Li2009,yang2016spectral},
	finite element methods
	\cite{Li2019SIAM,McLean2009Convergence,Mustapha2009Discontinuous,
		Mustapha2011Piecewise,
		Mustapha2012Uniform,Mustapha2014A} and so on.

	In recent years,  fractional optimal control problems have  attracted more and more research interest \cite{antil2015,antil2016a,Max2019,jin2017pointwise-in-time, fopt1,Ye2015A, Zhang;Liu;Zhou2019, Zhou2016Finite,Zhou;Zhang2018}.
	For problem \cref{eq:model},
	Zhou and Gong \cite{Zhou2016Finite} employed the conforming linear finite element method and L1 scheme for spatial and temporal discretizations, respectively, and    obtained optimal convergence results for the spatially semi-discrete approximation. By using the conforming linear finite element method in space and the L1 scheme/backward Euler convolution quadrature in time, Jin et al. \cite{jin2017pointwise-in-time} gave the first error estimate of the fully discrete scheme  with   $y_0=0$, which is nearly optimal with respect to the regularity.   
	In \cite{Max2019, Zhang;Liu;Zhou2019} error estimates were derived  for  fully discrete finite element approximations  for problem $\cref{eq:model}$ with a variant state equation like
	\begin{equation*}
	y'- \D_{0+}^{1-\alpha}\Delta y = f+u. 
	\end{equation*}
	%
	We note that the above works \cite{Max2019,jin2017pointwise-in-time,Zhang;Liu;Zhou2019,Zhou2016Finite} all focus  on the case $y_0=0$,  and their analyses are  based on uniform or quasi-uniform temporal grids. However, the situation will  be quite subtle when considering nonvanishing $y_0$. In fact, the nonvanishing initial value  may cause  essential singularities  (cf. \cref{thm:regu-y-p}), which can not be handled well by  using uniform or quasi-uniform temporal grids. On the other hand, the non-vanishing $y_d(T)$ may also cause singularities.  Fortunately,   all these  singularities can be dealt with by using  special graded temporal grids (cf. \cref{th:ape}).

  In this paper, for a full discretization using the conforming linear finite
  element method in space and the piecewise constant discontinuous Galerkin
  method in time, we provide the first numerical analysis of problem
  \cref{eq:model} with nonvanishing $ y_0 $. Moreover, for the case with $ y_0
  \in \dot H^{2r}(\Omega) $ with $0<r<\min\{1,\frac{1-\alpha}{\alpha}\}$ and $
  y_d \in H^1(0,T;L^2(\Omega)) $, 
  we have the following decompositions of the control
  $u$, the state $y$, and the co-state $p$:
	\begin{align*}
	u=u_1+u_2, \quad y = y_1 + y_2,  \quad
	p= p_1 + p_2,
	\end{align*}
	with the regularity estimates
	\begin{align*}
	&\nm{u_1}_{{}_0H^1(0,T;L^2(\Omega))} \leqslant C,\\
	&\nm{u_2'(t)}_{L^2(\Omega)} \leqslant
	C \big( t^{\alpha r +\alpha -1}+ (T-t)^{\alpha-1} \big),\\
	& \nm{y_1}_{{}_0H^{1+\alpha}(0,T;L^2(\Omega))} +
	\nm{y_1}_{{}_0H^1(0,T;\dot H^2(\Omega)}
	\leqslant C, \\
	& \nm{p_1}_{{}^0H^{1+\alpha}(0,T;L^2(\Omega))} +
	\nm{p_1}_{{}^0H^1(0,T;\dot H^2(\Omega))}
	\leqslant C, \\
	& \nm{y_2'(t)}_{L^2(\Omega)}
	\leqslant C \big( t^{\alpha r-1} + \omega_2(T-t) \big),
	\quad 0 < t < T, \\
	& \nm{y_2'(t)}_{\dot H^1(\Omega)}
	\leqslant C \big( t^{\alpha r - \alpha/2-1} + \omega_1(T-t) \big),
	\quad 0 < t < T, \\
	& \nm{p_2'(t)}_{L^2(\Omega)}
	\leqslant C \big( t^{\alpha r +\alpha -1} + (T-t)^{\alpha-1} \big),
	\quad 0 < t < T, \\
	& \nm{p_2'(t)}_{\dot H^1(\Omega)}
	\leqslant C \big(  t^{\alpha r +\alpha/2 -1} + (T-t)^{\alpha/2-1} \big),
	\quad 0 < t < T,
	\end{align*}
	where  \begin{align*}
	\omega_1(t) = \begin{cases}
	1+\frac{t^{3\alpha/2 -1}}{\snm{\alpha(2-3\alpha)}}
	& \text{ if }  \alpha \ne 2/3, \\
	\snm{\ln t} & \text{ if } \alpha = 2/3,
	\end{cases} \\
	\omega_2(t) = \begin{cases}
	1+\frac{t^{2\alpha-1}}{\snm{\alpha(1-2\alpha)}}
	& \text{ if } \alpha \ne 1/2, \\
	\snm{\ln t} & \text{ if } \alpha = 1/2,
	\end{cases}
	\end{align*} and $ C $ is a generic positive constant depending only on $ \alpha $, $r$, $ \nu $, $ u_* $,
	$ u^* $, $ y_0 $, $ y_d $, $ T $, and $ \Omega $.
	By the above estimates, we obtain first-order temporal accuracy and $\min\{2,1/\alpha+2r\}$-order spatial  accuracy   on graded temporal grids.

	The rest of this paper is organized as follows. \cref{sec:pre} introduces several Sobolev spaces and the Riemann-Liouville fractional calculus
	operators. \cref{sec:regu} investigates the regularity of problem
	\cref{eq:model}.   \cref{sec:conv}  carries out the convergence analysis for the discontinuous Galerkin method.  Finally, \cref{sec:numer} provides several numerical experiments to
	confirm the theoretical results.
\section{Preliminaries} \label{sec:pre}
In this paper, we introduce the following conventions: if $ D \subset \mathbb
R^l(l=1,2,3,4) $ is Lebesgue measurable, then define $ \dual{v,w}_D := \int_D v
\cdot w $ for scaler or vector valued functions $ v $ and $ w $, and if $X$ is a
Banach space, then $\dual{\cdot,\cdot}_X$ means the duality pairing between $
X^* $ (the dual space of $X$) and $X$; the notation $ C_\times $ means a
positive constant depending only on its subscript(s), and its value may differ
at each occurrence. Let $ H^\gamma(D) $ ($ -\infty < \gamma < \infty $) and $
H_0^\gamma(D) $ ($ 0<\gamma<\infty $) be the usual $\gamma$-th order Sobolev
spaces on $ D $ with norm $\nm{\cdot}_{H^\gamma(D)}$ and seminorm
$\snm{\cdot}_{H^\gamma(D)}$. In particular, $H^0(D)=L^2(D).$

	\medskip  \noindent \textbf{Sobolev Spaces.} Assume that $ -\infty < a < b < \infty $ and $ X $ is a Hilbert space. For each
	$ m \in \mathbb N $ and $ 1 \leqslant q \leqslant \infty $, define
	\begin{align*}
	{}_0W^{m,q}(a,b;X) & := \left\{
	v\in W^{m,q}(a,b;X):\
	v^{(k)}(a)=0, \,\, 0 \leqslant k < m
	\right\}, \\
	{}^0W^{m,q}(a,b;X) & := \left\{
	v\in W^{m,q}(a,b;X):\
	v^{(k)}(b)=0, \,\, 0 \leqslant k < m
	\right\},
	\end{align*}
	where $ W^{m,q}(a,b;X) $ is the usual vector valued Sobolev space and $ v^{(k)}
	$ is the $ k $-th weak derivative of $ v $. We equip the above two spaces with
	the norms
	\begin{align*}
	\nm{v}_{{}_0W^{m,q}(a,b;X)} &:= \nm{v^{(m)}}_{L^q(a,b;X)}
	\quad \forall v \in {}_0W^{m,q}(a,b;X), \\
	\nm{v}_{{}^0W^{m,q}(a,b;X)} &:= \nm{v^{(m)}}_{L^q(a,b;X)}
	\quad \forall v \in {}^0W^{m,q}(a,b;X),
	\end{align*}
	respectively. For any $ m \in \mathbb N_{>0} $ and $ 0 < \theta < 1 $, define
	\begin{align*}
	W^{m-1+\theta,q}(a,b;X) &:=
	\left(W^{m-1,q}(a,b;X), W^{m,q}(a,b;X) \right)_{\theta,q}, \\
	{}_0W^{m-1+\theta,q}(a,b;X) &:=
	\left({}_0W^{m-1,q}(a,b;X), \ {}_0W^{m,q}(a,b;X)\right)_{\theta,q}, \\
	{}^0W^{m-1+\theta,q}(a,b;X) &:=
	\left( {}^0W^{m-1,q}(a,b;X), \ {}^0W^{m,q}(a,b;X) \right)_{\theta,q},
	\end{align*}
	where $ (A,B)_{\theta,q} $ denotes the real interpolation space of  two Banach spaces,   $ A $ and $ B $, constructed by the  $ K $-method
	\cite{Tartar2007}.
	In addition, for $q=2$ and $ 0 \leqslant \beta < \infty $,
	denote
	$ H^\beta(a,b;X) :=W^{\beta,2}(a,b;X),$
	$ {}_0H^\beta(a,b;X) := {}_0W^{\beta,2}(a,b;X),$ and $  {}^0H^\beta(a,b;X) := {}^0W^{\beta,2}(a,b;X).$
	We also need the   space
	\[
	W_\text{loc}^{1,\infty}(a,b;X) :=
	\left\{
	v: (a,b) \to X:\
	v \in W^{1,\infty}(c,d;X)
	\,\text{ for all } a < c < d < b
	\right\}.
	\]
	\begin{remark}
		If $ 0 < \theta < 1 $ and $ 1 \leqslant q < \infty $ satisfy $ \theta q<1 $,
		then
		\[
		W^{\theta, q}(a,b;X) = {}_0W^{\theta, q}(a,b;X) =
		{}^0W^{\theta, q}(a,b;X)
		\]
		with equivalent norms.
	\end{remark}

	Let $ \Delta $ be the realization of the Laplace operator
	with homogeneous Dirichlet boundary condition in $ L^2(\Omega) $. For any $
	-\infty< r < \infty $, define
	\[
	\dot H^r(\Omega) := \{ (-\Delta)^{-r/2} v:\ v \in L^2(\Omega) \}
	\]
	and endow this space with the norm
	\[
	\nm{v}_{\dot H^r(\Omega)} := \nm{(-\Delta)^{r/2} v}_{L^2(\Omega)}
	\quad \forall v \in \dot H^\beta(\Omega).
	\]

	\begin{remark}
		For $r \in [0, 1]\setminus\{0.5\}$, $\dot H^{r}(\Omega) =
		H_0^r(\Omega) $ holds with equivalent norms, and for $1<r\leqslant 2$, the space
		$\dot H^{r}(\Omega)$ is continuously embedded into $H^{r}(\Omega)$.
	\end{remark}
	\medskip \noindent \textbf{Fractional calculus operators.}
	For $  \gamma > 0 $,  the left-sided and right-sided Riemann-Liouville fractional integral operators of order $\gamma$ are defined respectively by
	\begin{align*}
	\big(\D_{0+}^{-\gamma} v\big)(t) &:=
	\frac1{ \Gamma(\gamma) }
	\int_0^t (t-s)^{\gamma-1} v(s) \, \mathrm{d}s, \quad 0 < t < T, \\
	\big(\D_{T-}^{-\gamma} v\big)(t) &:=
	\frac1{ \Gamma(\gamma) }
	\int_t^T (s-t)^{\gamma-1} v(s) \, \mathrm{d}s, \quad 0 < t < T,
	\end{align*}
	for all $ v \in L^1(0,T;X) $, where $ \Gamma(\cdot) $ is the gamma function. In
	addition, let $ \D_{0+}^0 $ and $ \D_{T-}^0 $ be the identity operator on $
	L^1(0,T;X) $. Then for $0< \gamma \leqslant 1$, the left-sided and right-sided Riemann-Liouville fractional differential operators of order  $\gamma$ are defined respectively by
	\begin{align*}
	\D_{0+}^\gamma v & := \D \D_{0+}^{\gamma-1}v, \\
	\D_{T-}^\gamma v & := -\D\D_{T-}^{\gamma-1}v,
	\end{align*}
	for all $ v \in L^1(0,T;X) $, where $ \D $ is the first-order differential operator
	in the distribution sense.

	\begin{lemma}[\cite{ervin06}] 
		\label{lem:dual}
		If $v\in  H^{\alpha/2}(0,T),$ then
		\begin{align*}
		\dual{ \D_{0+}^{\alpha/2} v, \D_{T-}^{\alpha/2} v }_{(0,T)}  \geqslant C_{\alpha,T} \nm{v}_{ H^{\alpha/2}(0,T)}^2.
		\end{align*}
		Moreover, if $v,w \in  H^{\alpha/2}(0,T)$, then
		\begin{align*}
		&	\dual{\D_{0+}^{\alpha/2} v, \D_{T-}^{\alpha/2} w}_{(0,T)} \leqslant C_{\alpha,T} \nm{v}_{H^{\alpha/2}(0,T)} \nm{w}_{H^{\alpha/2}(0,T)},  \\
		&\dual{\D_{0+}^{\alpha} v,w}_{H^{\alpha/2}(0,T)} = 	\dual{\D_{0+}^{\alpha/2} v,\D_{T-}^{\alpha/2}w}_{(0,T)}= 	\dual{\D_{T-}^{\alpha}w,v}_{H^{\alpha/2}(0,T)}.
		\end{align*}
	\end{lemma}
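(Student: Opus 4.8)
The plan is to transfer the entire statement to the whole real line by zero extension and then read off everything from the Fourier representation of the Riemann--Liouville operators. Since $0<\alpha<1$ gives $\alpha/2<1/2$, the equivalence $H^{\alpha/2}(0,T) = {}_0H^{\alpha/2}(0,T) = {}^0H^{\alpha/2}(0,T)$ noted in the remark above shows that the zero extension $\tilde v$ of any $v\in H^{\alpha/2}(0,T)$ belongs to $H^{\alpha/2}(\mathbb R)$ and that $\nm{v}_{H^{\alpha/2}(0,T)}$ and $\nm{\tilde v}_{H^{\alpha/2}(\mathbb R)}$ are equivalent. On the line the left- and right-sided derivatives of order $\alpha/2$ are Fourier multipliers with symbols $(i\xi)^{\alpha/2}$ and $(-i\xi)^{\alpha/2}$, which for real $\xi$ satisfy $\overline{(i\xi)^{\alpha/2}} = (-i\xi)^{\alpha/2}$ and $(i\xi)^{\alpha/2}(i\xi)^{\alpha/2} = (i\xi)^{\alpha}$. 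Because $\tilde v$ vanishes outside $(0,T)$, the left-sided whole-line derivative of $\tilde v$ agrees with $\D_{0+}^{\alpha/2}v$ on $(0,T)$ and vanishes on $(-\infty,0)$, while the right-sided one agrees with $\D_{T-}^{\alpha/2}v$ on $(0,T)$ and vanishes on $(T,\infty)$; hence in any product the tails contribute nothing and the interval pairings equal the corresponding integrals over $\mathbb R$.

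With this dictionary the first two assertions follow from a single Parseval computation. For $v,w\in H^{\alpha/2}(0,T)$ I would obtain $\dual{\D_{0+}^{\alpha/2}v,\D_{T-}^{\alpha/2}w}_{(0,T)} = C\int_{\mathbb R}(i\xi)^{\alpha}\,\widehat{\tilde v}(\xi)\,\overline{\widehat{\tilde w}(\xi)}\,\mathrm d\xi$. Taking $w=v$ and writing $(i\xi)^{\alpha} = |\xi|^{\alpha}e^{i\alpha\pi\,\mathrm{sgn}(\xi)/2}$, the odd imaginary part integrates to zero against the even weight $|\widehat{\tilde v}|^2$, leaving $\dual{\D_{0+}^{\alpha/2}v,\D_{T-}^{\alpha/2}v}_{(0,T)} = C\cos(\alpha\pi/2)\,\snm{\tilde v}_{H^{\alpha/2}(\mathbb R)}^2$ with $\cos(\alpha\pi/2)>0$; the coercivity is then completed by dominating $\nm{v}_{H^{\alpha/2}(0,T)}$ by $\snm{\tilde v}_{H^{\alpha/2}(\mathbb R)}$, a fractional Poincar\'e inequality that follows from the compact support of $\tilde v$ together with the norm equivalence above. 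The boundedness estimate is immediate from the same identity by the Cauchy--Schwarz inequality in $\xi$, bounding $\int_{\mathbb R}|\xi|^{\alpha}|\widehat{\tilde v}||\widehat{\tilde w}|$ by $\snm{\tilde v}_{H^{\alpha/2}(\mathbb R)}\snm{\tilde w}_{H^{\alpha/2}(\mathbb R)}$ and then by the interval norms.

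For the adjoint identities I would use the symbol factorization $(i\xi)^{\alpha} = (i\xi)^{\alpha/2}(i\xi)^{\alpha/2}$. The boundedness just proved shows that $\D_{0+}^{\alpha}v$ and $\D_{T-}^{\alpha}w$ act as bounded functionals on $H^{\alpha/2}(0,T)$ through these pairings, so Parseval gives $\dual{\D_{0+}^{\alpha}v,w}_{H^{\alpha/2}(0,T)} = C\int_{\mathbb R}(i\xi)^{\alpha}\widehat{\tilde v}\,\overline{\widehat{\tilde w}}\,\mathrm d\xi = \dual{\D_{0+}^{\alpha/2}v,\D_{T-}^{\alpha/2}w}_{(0,T)}$, which is the first equality. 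For the second, $\dual{\D_{T-}^{\alpha}w,v}_{H^{\alpha/2}(0,T)} = C\int_{\mathbb R}(-i\xi)^{\alpha}\widehat{\tilde w}\,\overline{\widehat{\tilde v}}\,\mathrm d\xi$ is the complex conjugate of the middle integral, and since both quantities are real for real-valued $v,w$ the two coincide. To legitimize these manipulations at the regularity $H^{\alpha/2}$ I would first establish them for $v,w$ smooth and compactly supported in $(0,T)$, where the index law $\D_{0+}^{\alpha/2}\D_{0+}^{\alpha/2}=\D_{0+}^{\alpha}$ and the Fubini-type adjointness $\dual{\D_{0+}^{\alpha/2}v,\D_{T-}^{\alpha/2}w}_{(0,T)} = \dual{\D_{0+}^{\alpha}v,w}_{(0,T)}$ hold with no boundary terms, and then pass to the limit using density and the continuity of all three pairings.

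The main obstacle is not any single estimate but the careful bookkeeping behind the whole-line dictionary: verifying that zero extension preserves $H^{\alpha/2}$ membership, that the interval operators are genuinely the restrictions of the whole-line multipliers applied to $\tilde v$, and, crucially, that the contributions of the tails outside $(0,T)$ vanish so that the interval pairings equal the full-line Parseval integrals. The secondary delicate points are the fractional Poincar\'e step that upgrades the seminorm equality into coercivity against the full norm, and the density argument that justifies the symbol factorization for merely $H^{\alpha/2}$ data.
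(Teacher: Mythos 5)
Your proposal is correct, and it is essentially the argument of the cited source: the paper itself gives no proof of \cref{lem:dual}, quoting it from Ervin and Roop \cite{ervin06}, whose proof is exactly your route --- zero extension to $\mathbb{R}$ (legitimate since $\alpha/2<1/2$ makes extension by zero bounded with equivalent norms), the Fourier symbols $(i\xi)^{\alpha/2}$, $(-i\xi)^{\alpha/2}$, the Parseval identity yielding $\cos(\alpha\pi/2)\int_{\mathbb{R}}\snm{\xi}^{\alpha}\snm{\widehat{\tilde v}}^2\,\mathrm{d}\xi$ with the odd imaginary part cancelling, and a fractional Poincar\'e inequality upgrading the seminorm bound to coercivity. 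Your tail bookkeeping is also sound: the left-sided whole-line derivative vanishes on $(-\infty,0)$ and the right-sided one on $(T,\infty)$, so each product is supported in $[0,T]$ and the interval pairings coincide with the full-line integrals, and your density argument for the adjoint identities at $H^{\alpha/2}$ regularity works because $C_c^\infty(0,T)$ is dense in $H^{\alpha/2}(0,T)$ precisely when $\alpha/2\leqslant 1/2$.
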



\section{Regularity}
\label{sec:regu}
For any $ g \in L^q(0,T;L^2(\Omega)) $ with $ 1 < q < \infty $, define $Sg$
(cf.~\cref{sec:regu-S}) such that $(\D_{0+}^{\alpha}-\Delta)Sg=g.$ From
\cref{lem:regu-lp} we summarize several properties of $S$ as follows: %
(cf.~\cref{lem:regu-lp}):
\begin{small}
\begin{itemize}
  \item for any $ g \in {}_0W^{\beta,q}(0,T;L^2(\Omega)) $ with $ \beta \in (0,2]
    \setminus \{1-\alpha,2-\alpha\} $ and $ 1 < q < \infty $,
    \begin{equation}
      \label{eq:Sg-real}
      \nm{Sg}_{{}_0W^{\alpha+\beta,q}(0,T;L^2(\Omega))} +
      \nm{Sg}_{{}_0W^{\beta,q}(0,T;\dot H^2(\Omega))}
      \leqslant C_{\alpha,\beta,q} \nm{g}_{{}_0W^{\beta,q}(0,T;L^2(\Omega))};
    \end{equation}
  \item for any $ g \in {}_0H^\beta(0,T;L^2(\Omega)) $ with $ 0 \leqslant \beta < \infty
    $,
    \begin{equation}
      \label{eq:Sg-real-complex}
      \nm{Sg}_{{}_0H^{\alpha+\beta}(0,T;L^2(\Omega))} +
      \nm{Sg}_{{}_0H^\beta(0,T;\dot H^2(\Omega))}
      \leqslant C_{\alpha,\beta} \nm{g}_{{}_0H^\beta(0,T;L^2(\Omega))}.
    \end{equation}
\end{itemize}
\end{small}
Symmetrically, for any $ g \in L^q(0,T;L^2(\Omega)) $ with $ 1 < q < \infty $,
define $S^*g$ such that $(\D_{T-}^{\alpha}-\Delta)S^*g=g.$ Similar to $S$, there hold following properties of $S^*$:
\begin{small}
\begin{itemize}
  \item for any $ g \in {}^0W^{\beta,q}(0,T;L^2(\Omega)) $ with $ \beta \in (0,2] \setminus
    \{1-\alpha,2-\alpha\} $ and $ 1 < q < \infty $,
    \begin{equation}
      \label{eq:S^*g-real}
      \nm{S^*g}_{{}^0W^{\alpha+\beta,q}(0,T;L^2(\Omega))} +
      \nm{S^*g}_{{}^0W^{\beta,q}(0,T;\dot H^2(\Omega))}
      \leqslant C_{\alpha,\beta,q} \nm{g}_{{}^0W^{\beta,q}(0,T;L^2(\Omega))};
    \end{equation}
  \item for any $ g \in {}^0H^\beta(0,T;L^2(\Omega)) $ with $ 0 \leqslant \beta < \infty
    $,
    \begin{equation}
      \label{eq:S^*g-real-complex}
      \nm{S^*g}_{{}^0H^{\alpha+\beta}(0,T;L^2(\Omega))} +
      \nm{S^*g}_{{}^0H^\beta(0,T;\dot H^2(\Omega))}
      \leqslant C_{\alpha,\beta} \nm{g}_{{}^0H^{\beta}(0,T;L^2(\Omega))}.
    \end{equation}
\end{itemize}
\end{small}
In addition, by the definitions of $ S $ and $ S^* $, \cref{eq:Sg-real-complex},
\cref{eq:S^*g-real-complex} and \cref{lem:dual}, we obtain that, for any $ v, w
\in L^2(0,T;L^2(\Omega)) $,
\begin{align}
  (Sv,w)_{\Omega \times (0,T)} &=
  \big(Sv, (\D_{T-}^\alpha - \Delta)S^*w \big)_{\Omega \times (0,T)}
  \notag \\
  &=
  \big((\D_{0+}^\alpha - \Delta)Sv, S^*w\big)_{\Omega \times (0,T)}
  \notag \\
  &= (v, S^*w)_{\Omega \times (0,T)}.
  \label{eq:S-S*}
\end{align}

Assuming that $ y_0 \in L^2(\Omega) $ and $ y_d \in L^2(0,T;L^2(\Omega)) $, we
call $ u \in U_{\text{ad}} $ a solution to problem \cref{eq:model} if $ u $
solves the minimization problem
\begin{equation}
  \label{eq:weak_sol}
  \min\limits_{u \in U_{\text{ad}}} J(u) =
  \frac12 \nm{
    S(u+\D_{0+}^\alpha y_0)- y_d
  }_{ L^2(0,T;L^2(\Omega))}^2 +
  \frac\nu2 \nm{u}_{L^2(0,T;L^2(\Omega))}^2.
\end{equation}

By \cref{eq:S-S*}, a routine argument gives the following theorem
(cf.~\cite{M2008,troltzsch2010optimal}).
\begin{theorem}
  \label{thm:optim_cond}
  Problem \cref{eq:model} admits a unique solution $ u \in U_{\text{ad}} $, and
  \begin{equation}
    \label{eq:optim_cond}
    \dual{
      S^*\big( S(u+\D_{0+}^\alpha y_0) - y_d \big) + \nu u, v - u
    }_{\Omega \times (0,T)} \geqslant 0
  \end{equation}
  for all $ v \in U_\text{ad} $.
\end{theorem}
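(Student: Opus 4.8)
The plan is to read \cref{eq:weak_sol} as the minimization of the strictly convex quadratic functional $J$ over the admissible set $U_{\text{ad}}$, to obtain existence and uniqueness by the direct method, and to derive \cref{eq:optim_cond} as the associated first-order optimality condition using the adjoint identity \cref{eq:S-S*}. First I would verify that $J$ is well defined on $H := L^2(0,T;L^2(\Omega))$. The one point needing care is the affine offset generated by the non-smooth initial datum: a direct computation gives $(\D_{0+}^\alpha y_0)(t) = t^{-\alpha} y_0/\Gamma(1-\alpha)$, which belongs to $L^q(0,T;L^2(\Omega))$ for every $1 < q < 1/\alpha$ but fails to lie in $H$ when $\alpha \geqslant 1/2$. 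Invoking the $L^q$-regularity of $S$ (\cref{lem:regu-lp}), the fixed function $S(\D_{0+}^\alpha y_0)$ nonetheless lies in $H$, and since \cref{eq:Sg-real-complex} with $\beta=0$ gives $S : H \to H$ boundedly, the map $u \mapsto S(u+\D_{0+}^\alpha y_0) - y_d$ is a continuous affine map $H \to H$. Hence $J : H \to \mathbb{R}$ is a genuine quadratic functional.

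For existence and uniqueness I would apply the direct method. The set $U_{\text{ad}}$ is nonempty, convex, closed, and bounded in $H$, hence weakly sequentially compact; and $J$, being convex and continuous, is weakly lower semicontinuous, so it attains its infimum on $U_{\text{ad}}$. Because $\nu > 0$, the term $\frac\nu2 \nm{u}_H^2$ renders $J$ strictly convex, forcing the minimizer to be unique. To characterize it, note that a quadratic $J$ is Fréchet differentiable, and expanding $J(u+\epsilon(v-u))$ in $\epsilon$ yields
\[
  J'(u)(v-u) = \big( S(u+\D_{0+}^\alpha y_0) - y_d,\, S(v-u) \big)_{\Omega\times(0,T)} + \nu\,(u,\, v-u)_{\Omega\times(0,T)}.
\]
Applying \cref{eq:S-S*} to the two $H$-functions $v-u$ and $w := S(u+\D_{0+}^\alpha y_0) - y_d$ transfers $S$ onto $S^*$, giving $J'(u)(v-u) = ( S^*w + \nu u,\, v-u )_{\Omega\times(0,T)}$. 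The standard necessary condition for minimizing a differentiable convex functional over the convex set $U_{\text{ad}}$, namely $J'(u)(v-u) \geqslant 0$ for all $v \in U_{\text{ad}}$, is then exactly \cref{eq:optim_cond}; by convexity this condition is also sufficient, so it characterizes the unique solution.

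The main obstacle is the well-definedness step rather than the convex analysis: one must use the $L^q$ smoothing of $S$, not its $L^2$ theory, to absorb the singular inhomogeneity $S(\D_{0+}^\alpha y_0)$ into $H$. It is also worth checking that \cref{eq:S-S*} is applied only to the pair $v-u,\,w \in H$ (both honest $L^2$ functions), so that the identity is used exactly in the form stated and the singular term $\D_{0+}^\alpha y_0$ never appears inside a duality pairing on its own. Once these points are settled, the remainder is the textbook argument referenced after the statement.
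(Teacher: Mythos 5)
Your proposal is correct and follows essentially the same route the paper intends: the paper dispatches \cref{thm:optim_cond} as ``a routine argument'' based on \cref{eq:S-S*} with references to \cite{M2008,troltzsch2010optimal}, which is precisely the direct-method-plus-strict-convexity argument and first-order variational inequality you spell out. Your extra care about well-definedness --- observing that $\D_{0+}^\alpha y_0 = t^{-\alpha}y_0/\Gamma(1-\alpha)$ may fail to lie in $L^2(0,T;L^2(\Omega))$ and absorbing $S(\D_{0+}^\alpha y_0)$ into $L^2$ via the $L^q$-smoothing of \cref{lem:regu-lp} --- is a legitimate filling-in of a detail the paper leaves implicit, and your application of \cref{eq:S-S*} only to the genuine $L^2$ pair $v-u$ and $S(u+\D_{0+}^\alpha y_0)-y_d$ matches the identity exactly as stated.
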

In the rest of this paper, we use $ u $ to denote the solution of problem
\cref{eq:model} and use
\begin{equation}
  \label{eq:y-p-def}
  y:=S(u+\D_{0+}^\alpha y_0)\ \text{ and } \
  p:=S^*\big( y - y_d \big)
\end{equation}
to denote the corresponding state and co-state, respectively.

The main task of this section is to prove the following two theorems.
\begin{theorem}
  \label{thm:regu-u}
  Assume that $ y_0 \in \dot H^{2r}(\Omega) $ with
  $0<r<\min\{1,\frac{1-\alpha}{\alpha} \}$, and $ y_d \in H^1(0,T;L^2(\Omega))
  $. There exists a decomposition
  \[
    u= u_1+ u_2,
  \]
  where
  \begin{equation}
    \label{eq:u1}
    \nm{u_1}_{{}_0H^1(0,T;L^2(\Omega))} \leqslant C
  \end{equation}
  and $ u_2 \in W_\text{loc}^{1,\infty}(0,T;L^2(\Omega)) $ satisfies that
  \begin{equation}
    \label{eq:u2}
    \nm{u_2'(t)}_{L^2(\Omega)} \leqslant
    C \big( t^{\alpha r +\alpha -1}+ (T-t)^{\alpha-1} \big),
    \quad\text{a.e.}~0 < t < T,
  \end{equation}
  where  $ C $ is a positive constant depending only on $ \alpha $, $r$, $ \nu $,
  $ u_* $, $ u^* $, $ y_0 $, $ y_d $, $ T $ and $ \Omega $.
\end{theorem}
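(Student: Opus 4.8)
The plan is to derive a pointwise regularity estimate for $u'$ directly from the variational inequality \cref{eq:optim_cond}, exploiting the standard projection formula that characterizes the optimal control. The key structural fact is that the first-order optimality condition \cref{eq:optim_cond}, together with the pointwise box constraints defining $U_{\text{ad}}$, implies the projection representation
\begin{equation*}
  u(t) = \Pi_{[u_*,u^*]}\!\left( -\tfrac{1}{\nu}\, p(t) \right)
  \quad\text{a.e.~in } (0,T),
\end{equation*}
where $\Pi_{[u_*,u^*]}$ denotes the pointwise projection onto the interval $[u_*,u^*]$ and $p = S^*(y-y_d)$ is the co-state from \cref{eq:y-p-def}. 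Since the projection $\Pi_{[u_*,u^*]}$ is globally Lipschitz with constant $1$, any regularity of $p$ transfers directly to $u$: pointwise-in-time bounds on $p'(t)$ in $L^2(\Omega)$ yield the same bounds on $u'(t)$, and Sobolev regularity of $p$ yields the corresponding regularity of $u$.

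Granting this, the proof reduces to establishing the regularity of the co-state $p$ and then splitting it into a smooth and a singular part. First I would invoke the (forthcoming) decomposition of the state and co-state asserted in the introduction, or reprove its co-state component here: write $p = p_1 + p_2$, where $p_1$ carries the smooth contribution satisfying $\nm{p_1}_{{}^0H^{1+\alpha}(0,T;L^2(\Omega))} \leqslant C$ (hence in particular $\nm{p_1}_{{}^0H^1(0,T;L^2(\Omega))} \leqslant C$), and $p_2$ carries the singular contribution with the pointwise bound
\begin{equation*}
  \nm{p_2'(t)}_{L^2(\Omega)}
  \leqslant C\big( t^{\alpha r + \alpha - 1} + (T-t)^{\alpha - 1} \big),
  \quad 0 < t < T.
\end{equation*}
This co-state decomposition is itself obtained by feeding the source $y - y_d$ through the smoothing operator $S^*$ and using its mapping properties \cref{eq:S^*g-real}--\cref{eq:S^*g-real-complex}: the nonsmooth initial datum $y_0 \in \dot H^{2r}(\Omega)$ propagates through $y = S(u + \D_{0+}^\alpha y_0)$ to produce a singularity in $y$ near $t=0$ of order governed by $r$, while the endpoint term $y_d(T)$ and the time-reversed nature of $\D_{T-}^\alpha$ produce the $(T-t)^{\alpha-1}$ singularity near $t=T$. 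The exponents in the bound on $p_2'$ match exactly the regularizing gain $\alpha$ of $S^*$ applied to the singular parts of $y$.

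With the co-state decomposition in hand, I would set $u_1 := -\tfrac{1}{\nu} p_1$ composed appropriately with the projection and $u_2 := u - u_1$, so that the Lipschitz property of $\Pi_{[u_*,u^*]}$ transfers the $H^1$ bound \cref{eq:u1} from $p_1$ and the pointwise singular bound \cref{eq:u2} from $p_2'$. More carefully, because the projection is only Lipschitz and not linear, the clean split $u = \Pi(-p_1/\nu) + [\,u - \Pi(-p_1/\nu)\,]$ must be handled so that the remainder inherits the $p_2$ bound via $|\Pi(a) - \Pi(b)| \leqslant |a-b|$; this gives $\nm{u_2'(t)}_{L^2(\Omega)} = \tfrac{1}{\nu}\nm{p_2'(t)}_{L^2(\Omega)}$ up to the Lipschitz factor, producing exactly the stated exponents $t^{\alpha r + \alpha - 1}$ and $(T-t)^{\alpha-1}$. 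The local Lipschitz regularity $u_2 \in W^{1,\infty}_{\text{loc}}(0,T;L^2(\Omega))$ follows since the singular exponents are integrable but blow up only at the endpoints $0$ and $T$.

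\textbf{The main obstacle} I anticipate is establishing the co-state decomposition with the sharp exponents, rather than the transfer to $u$, which is essentially soft once the projection identity is available. The delicate point is tracking how the initial singularity of $y$ of order $r$ (coming from $\D_{0+}^\alpha y_0$ acting on $y_0 \in \dot H^{2r}$) is further smoothed by $S^*$: one must carefully justify that $S^*$ applied to a source behaving like $t^{\alpha r - 1}$ near $t=0$ yields a co-state derivative behaving like $t^{\alpha r + \alpha - 1}$, and simultaneously that the time-reversed operator generates the $(T-t)^{\alpha-1}$ behaviour near $T$ from the nonvanishing $y_d(T)$. This requires either a direct kernel/convolution estimate on the Mittag-Leffler representation of $S^*$, or a careful interpolation argument using \cref{eq:S^*g-real} at the borderline smoothness indices $\beta$ near $1-\alpha$, which are precisely the excluded values in that estimate; handling these borderline cases, together with verifying that $p_1 \in {}^0H^{1+\alpha}$ so that the regular part has the full claimed regularity, is where the real work lies.
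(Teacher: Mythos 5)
Your proposal correctly identifies the projection formula $u=f(p)$ (the paper's $f(v)=\Pi_{[u_*,u^*]}(-v/\nu)$, \cref{eq:u}) as the transfer mechanism, but it has a genuine structural gap: it treats the co-state decomposition $p=p_1+p_2$ with the sharp bounds \cref{eq:regu-p1} and \cref{eq:regu-p2} as something that can be established \emph{before} and \emph{independently of} the control decomposition. In the paper the logical order is the reverse: \cref{thm:regu-y-p} is proved \emph{from} \cref{thm:regu-u}, by setting $y_1:=Su_1$, $y_2:=S(u_2+\D_{0+}^\alpha y_0)$ and then feeding these through $S^*$ via \cref{lem:SgS*g}. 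The pointwise bound on $p_2'$ with exponent $t^{\alpha r+\alpha-1}$ needs the bound \cref{eq:u2} on $u_2'$ as input, because the singular structure of $y$ near $t=0$ is not only the explicit $S\D_{0+}^\alpha y_0$ term but also the contribution of $u$ itself, which is exactly the unknown. The optimality system couples $u$, $y$, $p$ in a closed loop, and the paper breaks it with a two-stage bootstrap that your outline lacks: first \cref{lem:u-to-u} is iterated to climb the Sobolev ladder by $2\alpha$ per step from $u\in L^2$ up to $u\in W^{1,q_0}(0,T;L^2(\Omega))$ with $q_0=\frac12\big(1+1/(1-\alpha r)\big)$; then \cref{lem:u-to-u-p} is iterated $m$ times, each pass re-splitting $p=\mathbb I_1+\mathbb I_2$ using the \emph{current} decomposition of $u$ and raising the integrability exponent by $\alpha/(1-\alpha)$ until it reaches $2$. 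Note that under the standing assumption $\alpha r<\min\{\alpha,1-\alpha\}\leqslant 1/2$, one always has $q_0<2$, so no single pass can yield the ${}_0H^1$ bound \cref{eq:u1}; some iteration of this kind is unavoidable, and your "main obstacle" paragraph, while rightly locating the difficulty in the co-state estimates, does not supply it.

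There is a second, more local error in how you handle the nonlinearity. You propose $u_1:=\Pi(-p_1/\nu)$, $u_2:=u-u_1$ and claim the Lipschitz property $\snm{\Pi(a)-\Pi(b)}\leqslant\snm{a-b}$ gives $\nm{u_2'(t)}_{L^2(\Omega)}\lesssim\nm{p_2'(t)}_{L^2(\Omega)}$. Lipschitz continuity controls values, not derivatives: writing $f(v)=\Pi(-v/\nu)$, the chain rule gives $u_2'=f'(p)p'-f'(p_1)p_1' = \big[f'(p)-f'(p_1)\big]p_1'+f'(p)p_2'$, and since $f'$ is piecewise constant with jumps of size $1/\nu$, the first term is of size $\snm{p_1'}$ on the set where $p$ and $p_1$ fall on different linearity pieces of $f$; as $p_1'$ is only in $L^2$ in time, no pointwise-in-time bound on $u_2'$ follows. (Your $u_1=f(p_1)$ also need not vanish at $t=0$, as membership in ${}_0W^{1,q}$ requires.) The paper circumvents precisely this by never composing $f$ with a single summand: in \cref{lem:u-to-u-p} it defines $\widetilde u_1(t):=\int_0^t f'(p(s))\,\mathbb I_1'(s)\,\mathrm{d}s$ and $\widetilde u_2(t):=u(0)+\int_0^t f'(p(s))\,\mathbb I_2'(s)\,\mathrm{d}s$, with $f'$ always evaluated at the full co-state, so that $\widetilde u_1+\widetilde u_2=u$ exactly while each derivative inherits the bound of the corresponding $\mathbb I_i'$ from $\snm{f'}\leqslant 1/\nu$. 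This integrated chain-rule splitting, together with the bootstrap, is the substance of the paper's proof; without both, your outline does not close.
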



	\begin{theorem}
		\label{thm:regu-y-p}
		Assume that $ y_0 \in \dot H^{2r}(\Omega) $  with $0<r<\min\{1,\frac{1-\alpha}{\alpha} \}$, and $ y_d \in H^{1}(0,T;L^2(\Omega))
		$.  There exist decompositions
		\begin{align*}
		y= y_1 + y_2, \quad  p= p_1 + p_2,
		\end{align*}
		where
		\begin{align}
		& y_1 \in {}_0H^{1+\alpha}(0,T;L^2(\Omega))
		\bigcap {}_0H^1(0,T;\dot H^2(\Omega)), \\
		& p_1 \in {}^0H^{1+\alpha}(0,T;L^2(\Omega))
		\bigcap {}^0H^1(0,T;\dot H^2(\Omega)) \\
		& \text{and} \quad y_2, p_2 \in
	 C^1((0,T);\dot H^1(\Omega)).
		\end{align}
		Moreover,
		\begin{align}
		& \nm{y_1}_{{}_0H^{1+\alpha}(0,T;L^2(\Omega))} +
		\nm{y_1}_{{}_0H^1(0,T;\dot H^2(\Omega)}
		\leqslant C, \\
		& \nm{p_1}_{{}^0H^{1+\alpha}(0,T;L^2(\Omega))} +
		\nm{p_1}_{{}^0H^1(0,T;\dot H^2(\Omega))}
    \leqslant C, \label{eq:regu-p1} \\
		& \nm{y_2'(t)}_{\dot H^1(\Omega)}
		\leqslant C \big( t^{\alpha r - \alpha/2-1} + \omega_1(T-t) \big),
		\quad 0 < t < T, \\
		& \nm{y_2'(t)}_{L^2(\Omega)}
		\leqslant C \big( t^{\alpha r-1} + \omega_2(T-t) \big),
		\quad 0 < t < T, \\
		& \nm{p_2'(t)}_{\dot H^1(\Omega)}
		\leqslant C \big( t^{\alpha r + \alpha/2-1} + (T-t)^{\alpha/2-1} \big),
    \quad 0 < t < T, \label{eq:regu-p2-h1} \\
		& \nm{p_2'(t)}_{L^2(\Omega)}
		\leqslant C \big(t^{\alpha r +\alpha -1}+ (T-t)^{\alpha-1} \big),
    \quad 0 < t < T. \label{eq:regu-p2}
		\end{align}
		The above $ C $ is a positive constant depending only on $ \alpha $, $r$, $ \nu $,
		$ u_* $, $ u^* $, $ y_0 $, $ y_d $, $ T $ and $ \Omega $, and for any $ t > 0 $,
		\begin{align}
		\omega_1(t) := \begin{cases}
		1+\frac{t^{3\alpha/2 -1}}{\snm{\alpha(2-3\alpha)}}
		& \text{ if }  \alpha \ne 2/3, \\
		\snm{\ln t} & \text{ if } \alpha = 2/3,
		\end{cases} \label{eq:omega1} \\
		\omega_2(t) := \begin{cases}
		1+\frac{t^{2\alpha-1}}{\snm{\alpha(1-2\alpha)}}
		& \text{ if } \alpha \ne 1/2, \\
		\snm{\ln t} & \text{ if } \alpha = 1/2.
		\end{cases} \label{eq:omega2}
		\end{align}

	\end{theorem}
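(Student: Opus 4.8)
The plan is to push the singular/regular splitting of the data through the solution operators $S$ and $S^*$, treating regular pieces with the Sobolev smoothing bounds \cref{eq:Sg-real-complex}, \cref{eq:S^*g-real-complex} and singular pieces with pointwise growth estimates obtained from the spectral representation. Throughout I write $\{(\lambda_j,\phi_j)\}$ for the Dirichlet eigenpairs of $-\Delta$ and denote by $E_\alpha$, $E_{\alpha,\alpha}$ the Mittag--Leffler functions, using $\frac{\mathrm d}{\mathrm dt}E_\alpha(-\lambda t^\alpha) = -\lambda t^{\alpha-1}E_{\alpha,\alpha}(-\lambda t^\alpha)$ and the decay $\snm{E_{\alpha,\alpha}(-x)} \leqslant C(1+x)^{-2}$ for $x \geqslant 0$ (the $x^{-1}$ term of the asymptotic expansion drops out because $1/\Gamma(0)=0$).

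\textbf{State.} From $y = Su + S\D_{0+}^\alpha y_0$ in \cref{eq:y-p-def} and the control splitting $u=u_1+u_2$ of \cref{thm:regu-u}, I set $y_1:=Su_1$ and $y_2:=Su_2 + S\D_{0+}^\alpha y_0$. Since $\nm{u_1}_{{}_0H^1(0,T;L^2(\Omega))} \leqslant C$ by \cref{eq:u1}, applying \cref{eq:Sg-real-complex} with $\beta=1$ gives $y_1 \in {}_0H^{1+\alpha}(0,T;L^2(\Omega)) \cap {}_0H^1(0,T;\dot H^2(\Omega))$ with the claimed bound. For the singular part I treat the two summands separately. Because $\D_{0+}^\alpha y_0(t) = t^{-\alpha}y_0/\Gamma(1-\alpha)$, the term $S\D_{0+}^\alpha y_0$ equals the homogeneous propagator $\sum_j E_\alpha(-\lambda_j t^\alpha)(y_0,\phi_j)\phi_j$; differentiating termwise, the per-mode factor $\lambda_j t^{\alpha-1}E_{\alpha,\alpha}(-\lambda_j t^\alpha)$ is bounded by $C\lambda_j^r t^{\alpha r-1}$ in $L^2(\Omega)$ and by $C\lambda_j^r t^{\alpha r-\alpha/2-1}$ in $\dot H^1(\Omega)$ (via $x^{1-r}(1+x)^{-2}\leqslant C$ and $x^{3/2-r}(1+x)^{-2}\leqslant C$ with $x=\lambda_j t^\alpha$), and since $\sum_j\lambda_j^{2r}(y_0,\phi_j)^2 = \nm{y_0}_{\dot H^{2r}(\Omega)}^2$ this produces the $t$-power parts of the $y_2'$ estimates. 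The remaining summand $Su_2$ is handled through the convolution kernel $\tau^{\alpha-1}E_{\alpha,\alpha}(-\lambda_j\tau^\alpha)$: the left singularity $u_2'(t)\sim t^{\alpha r+\alpha-1}$ yields a milder contribution near $t=0$, whereas the right singularity $u_2'(t)\sim(T-t)^{\alpha-1}$ generates the terminal growth $\omega_2(T-t)$ in $L^2(\Omega)$ and $\omega_1(T-t)$ in $\dot H^1(\Omega)$.

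\textbf{Co-state.} Writing $p=S^*(y-y_d)$, I split $y-y_d = g_1 + g_2$, choosing $g_2$ to collect the time-independent terminal value $z:=(y-y_d)(T)\in L^2(\Omega)$ together with the near-endpoint singular parts inherited from $y_2$, so that the remainder $g_1:=y-y_d-g_2$ lies in ${}^0H^1(0,T;L^2(\Omega))$. I set $p_1:=S^*g_1$, bounded by \cref{eq:S^*g-real-complex} with $\beta=1$ to give \cref{eq:regu-p1}, and $p_2:=S^*g_2$. Using $\int_0^{T-t}\tau^{\alpha-1}E_{\alpha,\alpha}(-\lambda_j\tau^\alpha)\,\mathrm d\tau = \lambda_j^{-1}\big(1-E_\alpha(-\lambda_j(T-t)^\alpha)\big)$, differentiation of $S^*z$ cancels one factor $\lambda_j$ and leaves $(T-t)^{\alpha-1}E_{\alpha,\alpha}(-\lambda_j(T-t)^\alpha)$, which gives the clean terminal rates $(T-t)^{\alpha-1}$ in $L^2(\Omega)$ and $(T-t)^{\alpha/2-1}$ in $\dot H^1(\Omega)$ for $z\in L^2(\Omega)$, matching \cref{eq:regu-p2} and \cref{eq:regu-p2-h1}; the near-$0$ rates $t^{\alpha r+\alpha-1}$ and $t^{\alpha r+\alpha/2-1}$ arise from the order-$\alpha$ smoothing of $S^*$ acting on the near-$0$ singularities of $y_2$. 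Finally, $y_2,p_2\in C^1((0,T);\dot H^1(\Omega))$ follows once the differentiated spectral series are shown to converge locally uniformly on compact subsets of $(0,T)$.

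\textbf{Main obstacle.} The crux is the pointwise growth analysis of the singular parts, above all the $Su_2$ contribution near $t=T$. These reduce to resonant convolution integrals of the type $\int_0^t(t-s)^{\alpha-1}(T-s)^{c}\,\mathrm ds$ and their $\dot H^1$ analogues; the profiles $\omega_1,\omega_2$ and their critical exponents $\alpha=2/3$ and $\alpha=1/2$ (cf. \cref{eq:omega1}, \cref{eq:omega2}) are exactly the logarithmic resonances at which a Gamma factor in the Beta-type evaluation develops a pole. The $\dot H^1$ bounds, which cost an extra half order of smoothing (a factor $\lambda_j^{1/2}$, i.e. $t^{\alpha/2}$ or $(T-t)^{\alpha/2}$), are the most delicate and require the sharp $O(x^{-2})$ decay of $E_{\alpha,\alpha}$ rather than the crude $O(x^{-1})$ bound.
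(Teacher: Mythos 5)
Your proposal is correct and essentially reproduces the paper's own argument: the identical splittings $y_1=Su_1$, $y_2=S(u_2+\D_{0+}^{\alpha}y_0)$ from \cref{thm:regu-u}, and for the co-state the same decomposition $p_1=S^*\bigl(y_1-y_d-(y_1-y_d)(T)\bigr)$, $p_2=S^*\bigl(y_2+(y_1-y_d)(T)\bigr)$ (your regrouping of $y_2(T)$ into $z=(y-y_d)(T)$ gives the same pair), with the regular parts controlled by \cref{eq:Sg-real-complex,eq:S^*g-real-complex} at $\beta=1$ and the singular parts by the growth estimates of \cref{lem:SgS*g} and \cref{eq:SDalpha_v_h1}, including the same Beta-integral resonance analysis producing $\omega_1,\omega_2$ at the critical exponents $\alpha=2/3$ and $\alpha=1/2$. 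The only cosmetic difference is that you derive the kernel bounds mode-by-mode from the Mittag--Leffler asymptotics (correctly noting the sharp $O(x^{-2})$ decay of $E_{\alpha,\alpha}$ needed for the $\dot H^1$ rates), which is precisely how the paper establishes the operator estimates of \cref{lem:E} and \cref{lem:grow} in its appendix.
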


	\begin{remark} The results of \cref{thm:regu-u,thm:regu-y-p} can be easily extended to the case $y_0 \in \dot H^{2r}(\Omega)$ with $r\geqslant \min\{1,\frac{1-\alpha}{\alpha}\}$.
	\end{remark}


\subsection{Proofs of \texorpdfstring{\cref{thm:regu-u,thm:regu-y-p}}{}} 

For $ g \in L^1(0,T;L^2(\Omega)) $, we have that \cite{Jin2018}
\begin{align}
  (Sg)(t) &= \int_0^t E(s) g(t-s) \, \mathrm{d}s,
  \qquad \, \text{a.e.}~0 < t < T, \label{eq:Sg} \\
  (S^*g)(t) &= \int_t^T E(s-t) g(s) \, \mathrm{d}s,
  \qquad \, \text{a.e.}~0 < t < T, \label{eq:S*g}
\end{align}
where, for each $ 0 < s \leqslant T $,
\begin{equation*}
  E(s) := \frac1{2\pi i}
  \int_0^\infty e^{-rs} \big(
    (r^\alpha e^{-i\alpha\pi} - \Delta)^{-1} -
    (r^\alpha e^{i\alpha\pi} - \Delta)^{-1}
  \big) \, \mathrm{d}r.
\end{equation*}
\begin{lemma}[\cite{Jin2018}]
  \label{lem:E}
  The function $ E $ is an $ \mathcal L(L^2(\Omega),\dot H^1(\Omega)) $-valued
  analytic function on $ (0,\infty) $, and
  \begin{equation*}
    \nm{E(t)}_{\mathcal L(L^2(\Omega))} +
    t^{\alpha/2} \nm{E(t)}_{\mathcal L(L^2(\Omega), \dot H^1(\Omega))}
    \leqslant C_\alpha t^{\alpha-1}, \quad t > 0.
  \end{equation*}
\end{lemma}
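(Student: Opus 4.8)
The plan is to recognize $E$ as a Dunford--Hankel contour integral and to read off both the bounds and the analyticity from the resolvent estimates for the self-adjoint operator $-\Delta$. First I would rewrite the defining integral: the two resolvents evaluated at $r^\alpha e^{\mp i\alpha\pi}$ are precisely the boundary values of $z \mapsto (z^\alpha - \Delta)^{-1}$ on the two sides of the branch cut of $z \mapsto z^\alpha$ along $(-\infty,0)$, so the given formula is a Bromwich integral collapsed onto that cut. Using Cauchy's theorem together with the decay of the resolvent as $\snm{z} \to \infty$, I would deform it to a Hankel contour
\[
  E(t) = \frac{1}{2\pi i} \int_{\Gamma_\theta} e^{zt} (z^\alpha - \Delta)^{-1} \, \mathrm{d}z,
\]
where $\Gamma_\theta$ consists of the two rays $\{\rho e^{\pm i\theta} : \rho \geqslant \delta\}$ and the arc $\{\delta e^{i\phi} : \snm{\phi} \leqslant \theta\}$, with a fixed $\theta \in (\pi/2,\pi)$ chosen so that $\alpha\theta < \pi$. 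This angle condition guarantees that $z^\alpha$ never meets the negative real axis, hence stays in the resolvent set of $\Delta$.

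Next I would establish the two resolvent bounds. Since $-\Delta$ is self-adjoint and positive with eigenvalues $\lambda_j > 0$, for $w = z^\alpha$ with $\snm{\arg w} \leqslant \alpha\theta < \pi$ one has $\snm{w + \lambda_j} \geqslant c_\theta (\snm{w} + \lambda_j)$ uniformly in $j$, which by the spectral calculus gives $\nm{(z^\alpha - \Delta)^{-1}}_{\mathcal L(L^2(\Omega))} \leqslant C_\theta \snm{z}^{-\alpha}$. For the second bound I would use the elementary inequality $\sqrt{\lambda_j}/(\snm{w} + \lambda_j) \leqslant 1/(2\sqrt{\snm{w}})$ (AM--GM), which yields $\nm{(z^\alpha - \Delta)^{-1}}_{\mathcal L(L^2(\Omega), \dot H^1(\Omega))} \leqslant C_\theta \snm{z}^{-\alpha/2}$.

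The stated estimates then follow by choosing the contour radius $\delta = 1/t$ and splitting $\Gamma_\theta$ into rays and arc. On a ray, $\mathrm{Re}(zt) = t\rho\cos\theta < 0$ provides exponential decay; substituting $\rho = \tau/t$ converts the ray integral of $e^{zt}\snm{z}^{-\alpha}$ into $t^{\alpha-1}$ times a convergent $\tau$-integral, and the arc contributes $\delta^{1-\alpha} = t^{\alpha-1}$. Together these give $\nm{E(t)}_{\mathcal L(L^2(\Omega))} \leqslant C_\alpha t^{\alpha-1}$; repeating the argument verbatim with the exponent $\alpha/2$ in place of $\alpha$ gives $\nm{E(t)}_{\mathcal L(L^2(\Omega), \dot H^1(\Omega))} \leqslant C_\alpha t^{\alpha/2-1}$, which is the claimed estimate after multiplying by $t^{\alpha/2}$. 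As a cross-check, the spectral representation identifies the scalar symbol of $E(t)$ as $t^{\alpha-1} E_{\alpha,\alpha}(-\lambda_j t^\alpha)$, and the classical Mittag--Leffler decay $\snm{E_{\alpha,\alpha}(-x)} \leqslant C_\alpha/(1+x)$ reproduces both bounds, the supremum $\sup_{x \geqslant 0} x^{1/2}/(1+x) = 1/2$ handling the $\dot H^1$ case.

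For analyticity I would keep the contour fixed and let $t$ range over the complex sector $\snm{\arg t} < \theta - \pi/2$, on which $\mathrm{Re}(zt) < 0$ still holds on both rays; the integrand $e^{zt}(z^\alpha - \Delta)^{-1}$ is holomorphic in $t$, and the integral together with every term obtained by differentiating under the integral sign converges locally uniformly in operator norm, so $E$ is $\mathcal L(L^2(\Omega), \dot H^1(\Omega))$-valued analytic on $(0,\infty)$. The main obstacle I anticipate is not the estimation, which is routine once the contour is in place, but the careful justification of the deformation from the branch cut to $\Gamma_\theta$: one must control the resolvent uniformly near the cut and on the large arcs that close the contour, and keep the angle constraint $\alpha\theta < \pi$ consistent with the requirement $\cos\theta < 0$ needed for convergence, which is exactly where the hypothesis $0 < \alpha < 1$ is genuinely used.
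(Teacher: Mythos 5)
Your proof is correct and follows essentially the same route as the source the paper cites for this lemma (the paper quotes \cref{lem:E} from \cite{Jin2018} without reproducing the proof): there, too, $E(t)$ is represented as a Hankel-contour integral of $e^{zt}(z^\alpha-\Delta)^{-1}$, the sectorial resolvent bounds $\nm{(z^\alpha-\Delta)^{-1}}_{\mathcal L(L^2(\Omega))}\leqslant C_\theta\snm{z}^{-\alpha}$ and $\nm{(z^\alpha-\Delta)^{-1}}_{\mathcal L(L^2(\Omega),\dot H^1(\Omega))}\leqslant C_\theta\snm{z}^{-\alpha/2}$ are established via the spectral calculus, and the choice $\delta=1/t$ of the contour radius yields the $t^{\alpha-1}$ and $t^{\alpha/2-1}$ decay, with analyticity obtained exactly as you describe by letting $t$ range over a complex sector and differentiating under the integral sign. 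Your Mittag--Leffler cross-check via $E(t)=t^{\alpha-1}\sum_k E_{\alpha,\alpha}(-\lambda_k t^\alpha)\dual{\cdot,\phi_k}_\Omega\,\phi_k$ is also consistent with the spectral representations \cref{eq:Sv,eq:SDalpha_v} used in the paper's appendix.
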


In the rest of this subsection, for convenience we will always assume that $ y_d
\in H^{1}(0,T;L^2(\Omega)) $ and $ y_0 \in \dot H^{2r}(\Omega) $, where
$0<r<\min\{1,\frac{1-\alpha}{\alpha}\}$.

\begin{lemma} 
  \label{lem:SgS*g}
  Assume that $ g \in W_\text{loc}^{1,\infty}(0,T;L^2(\Omega)) $ and $ A $ is a
  positive constant. If
  \begin{equation}
    \label{eq:SgS*g-g}
    \nm{g'(t)}_{L^2(\Omega)} \leqslant
    A \big( t^{\alpha r + \alpha -1}+ (T-t)^{\alpha-1} \big),
    \quad\text{a.e.}~ 0 < t < T,
  \end{equation}
  then $ Sg \in C^1((0,T);\dot H^1(\Omega)) $ and, for any $ 0 < t < T $,
  \begin{align}
    \nm{(Sg)'(t)}_{L^2(\Omega)} & \leqslant
    C_{\alpha,T} \big( A + \nm{g(0)}_{L^2(\Omega)} \big)
    \big( t^{ \alpha -1} + \omega_2(T-t) \big),
    \label{eq:90} \\
    \nm{(Sg)'(t)}_{\dot H^1(\Omega)} & \leqslant
    C_{\alpha,T} \big( A + \nm{g(0)}_{L^2(\Omega)} \big)
    \big( t^{\alpha/2-1} + \omega_1(T-t) \big).
    \label{eq:91}
  \end{align}
  If
  \begin{equation*}
    \nm{g'(t)}_{L^2(\Omega)} \leqslant
    A\big( t^{\alpha r-1} + \omega_2(T-t) \big),
    \quad\text{a.e.}~0 < t < T,
  \end{equation*}
  then $ S^*g \in C^1((0,T);\dot H^1(\Omega)) $ and, for any $ 0 < t < T $,
  \begin{align*}
    \nm{(S^*g)'(t)}_{L^2(\Omega)} & \leqslant
    C_{\alpha,T} \big( A + \nm{g(T)}_{L^2(\Omega)} \big)
    \big( t^{\alpha r +\alpha -1} + (T-t)^{\alpha -1} \big), \\
    \nm{(S^*g)'(t)}_{\dot H^1(\Omega)} & \leqslant
    C_{\alpha,T} \big( A + \nm{g(T)}_{L^2(\Omega)} \big)
    \big( t^{ \alpha r+\alpha/2-1} + (T-t)^{\alpha/2 -1} \big).
  \end{align*}
  The above $\omega_1$ and $\omega_2$ are defined by \cref{eq:omega1,eq:omega2},
  respectively.
\end{lemma}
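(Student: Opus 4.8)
The plan is to differentiate the convolution representations \cref{eq:Sg,eq:S*g} directly, arranging the derivative to fall on the smooth factor $g$ rather than on the kernel $E$, and then to insert the operator-norm bounds of \cref{lem:E} together with the hypothesized pointwise bounds on $g'$. First, for $Sg$ I would write $(Sg)(t)=\int_0^t E(s)g(t-s)\,\mathrm ds$ and differentiate in $t$ by the Leibniz rule, obtaining
\[
  (Sg)'(t) = E(t)g(0) + \int_0^t E(s)\,g'(t-s)\,\mathrm ds.
\]
This form is crucial: since $E$ is only weakly singular at the origin, with $\nm{E(s)}_{\mathcal L(L^2(\Omega))}\le C_\alpha s^{\alpha-1}$ and $\nm{E(s)}_{\mathcal L(L^2(\Omega),\dot H^1(\Omega))}\le C_\alpha s^{\alpha/2-1}$ by \cref{lem:E}, placing the derivative on $g$ avoids differentiating the kernel and keeps every integral convergent. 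Taking $L^2(\Omega)$- and $\dot H^1(\Omega)$-norms and applying \cref{lem:E} reduces \cref{eq:90,eq:91} to estimating the scalar convolution integrals
\[
  \int_0^t s^{\gamma-1}\big( (t-s)^{\alpha r+\alpha-1} + (T-t+s)^{\alpha-1} \big)\,\mathrm ds,
\]
with $\gamma=\alpha$ for \cref{eq:90} and $\gamma=\alpha/2$ for \cref{eq:91}, the boundary term contributing $\nm{g(0)}_{L^2(\Omega)}$ times $t^{\gamma-1}$.

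The estimate of these scalar integrals is the technical heart. The first summand is a Beta integral $\int_0^t s^{\gamma-1}(t-s)^{\alpha r+\alpha-1}\,\mathrm ds = t^{\gamma+\alpha r+\alpha-1}B(\gamma,\alpha r+\alpha)$, whose exponent exceeds $\gamma-1$, so on $(0,T)$ it is dominated by $C_{\alpha,T}t^{\gamma-1}$. The second summand is genuinely two-sided: writing $\sigma=T-t$ I would split $\int_0^t s^{\gamma-1}(\sigma+s)^{\alpha-1}\,\mathrm ds$ at $s=\sigma$. On $(0,\sigma)$ one bounds $(\sigma+s)^{\alpha-1}\le\sigma^{\alpha-1}$ to produce $\sigma^{\gamma+\alpha-1}$, while on $(\sigma,t)$ one bounds $(\sigma+s)^{\alpha-1}\le s^{\alpha-1}$ and integrates $s^{\gamma+\alpha-2}$, whose outcome is governed by the sign of the resulting exponent $\gamma+\alpha-1$. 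For $\gamma=\alpha$ this yields the $(T-t)^{2\alpha-1}$ term and the logarithm at $\alpha=1/2$, that is $\omega_2(T-t)$ of \cref{eq:omega2}; for $\gamma=\alpha/2$ it yields $(T-t)^{3\alpha/2-1}$ and the logarithm at $\alpha=2/3$, that is $\omega_1(T-t)$ of \cref{eq:omega1}. The complementary regime $t\le T/2$, where $\sigma$ is bounded below, is handled separately and contributes only a constant.

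The claims for $S^*g$ follow by the same scheme applied to $(S^*g)(t)=\int_0^{T-t}E(\tau)g(t+\tau)\,\mathrm d\tau$, which after differentiation gives
\[
  (S^*g)'(t) = -E(T-t)g(T) + \int_0^{T-t}E(\tau)\,g'(t+\tau)\,\mathrm d\tau.
\]
Here the boundary term supplies the $(T-t)^{\gamma-1}$ singularity, and the convolution of $\tau^{\gamma-1}$ against the hypothesized bound $(t+\tau)^{\alpha r-1}+\omega_2(T-t-\tau)$ supplies the interior singularity $t^{\alpha r+\gamma-1}$: splitting the domain at the natural scale $\tau=t$ isolates the $(t+\tau)^{\alpha r-1}$ contribution (using $\alpha+\alpha r<1$, which holds since $r<(1-\alpha)/\alpha$), while the $\omega_2$ contribution reduces to the Beta-type bound $\int_0^\sigma\tau^{\gamma-1}(\sigma-\tau)^{2\alpha-1}\,\mathrm d\tau=C\sigma^{\gamma+2\alpha-1}$, whose exponent exceeds $\gamma-1$ and is therefore absorbed into $(T-t)^{\gamma-1}$. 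Finally, the $C^1((0,T);\dot H^1(\Omega))$ regularity and the legitimacy of differentiating under the integral sign follow from the local integrability of these bounds together with dominated convergence, using the analyticity of $E$ from \cref{lem:E}. I expect the main obstacle to be the bookkeeping in the two-sided splits, in particular tracking the signs of the exponents $2\alpha-1$ and $3\alpha/2-1$ so as to separate correctly the bounded contributions from the genuine singularities $\omega_1,\omega_2$, including the borderline logarithmic cases $\alpha=1/2$ and $\alpha=2/3$.
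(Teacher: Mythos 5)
Your proposal is correct and takes essentially the same route as the paper: the identical differentiated representation $(Sg)'(t)=E(t)g(0)+\int_0^t E(s)\,g'(t-s)\,\mathrm{d}s$ (and its mirror for $S^*g$ with boundary term $-E(T-t)g(T)$), the kernel bounds of \cref{lem:E}, and elementary estimation of the resulting scalar convolution integrals. Your split of $\int_0^t s^{\gamma-1}(T-t+s)^{\alpha-1}\,\mathrm{d}s$ at $s=T-t$ is merely a repackaging of the paper's substitution yielding $\int_0^{t/(T-t)}x^{\gamma-1}(1+x)^{\alpha-1}\,\mathrm{d}x$, with the same sign analysis of the exponents $2\alpha-1$ and $3\alpha/2-1$ producing $\omega_2$ and $\omega_1$, including the logarithmic borderline cases.
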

\begin{proof}
  By \cref{eq:Sg}, \cref{eq:SgS*g-g} and \cref{lem:E}, a straightforward
  computation gives
  \begin{equation}
    \label{eq:lxy}
    (Sg)'(t) = E(t)g(0) + \int_0^t E(s) g'(t-s) \, \mathrm{d}s,
    \quad 0 < t < T,
  \end{equation}
  and, by the techniques in the proof of \cite[Theorem 2.6]{Diethelm2010}, it
  is easy to verify that $ Sg \in C^1((0,T); \dot H^1(\Omega)) $.
  Furthermore, by \cref{eq:SgS*g-g}, \cref{eq:lxy,lem:E},
  \begin{small}
  \begin{align*}
    \nm{(Sg)'(t)}_{L^2(\Omega)}
    &  \leqslant
    C_\alpha \Big(
      t^{\alpha-1} \nm{g(0)}_{L^2(\Omega)} +
      A \int_0^t s^{\alpha-1}\big(
        (t-s)^{\alpha r + \alpha  -1}\!+\! (T\!-\!t\!+\!s)^{\alpha -1}
      \big) \, \mathrm{d}s
    \Big) \\ & =
    C_\alpha \Big(
      t^{\alpha-1} \nm{g(0)}_{L^2(\Omega)} +
      A t^{\alpha r +2\alpha-1} \int_0^1 x^{\alpha-1}
      (1-x)^{\alpha r + \alpha  -1} \mathrm{d}x \\
      &\qquad \qquad \qquad \qquad \qquad \quad
      \ \ \, {} + A(T-t)^{2\alpha-1}  \int_{0}^{t/(T-t)} x^{\alpha-1}(1+x)^{\alpha-1} \, \mathrm{d}x
    \Big) \\
    & \leqslant C_{\alpha,T} \big( A + \nm{g(0)}_{L^2(\Omega)} \big)
    \Big( t^{\alpha  -1} + \omega_2(T-t) \Big)
  \end{align*}
  \end{small}
  and
  \begin{small}
  \begin{align*}
    \nm{(Sg)'(t)}_{\dot H^1(\Omega)} & \leqslant
    C_\alpha \! \Big(
      t^{\alpha/2-1} \! \nm{g(0)}_{L^2(\Omega)} \!+\!
      A\! \int_0^t \! s^{\alpha/2-1} \big(
        (t-s)^{\alpha r +\alpha-1} \!+\! (T\!-\!t\!+\!s)^{\alpha-1}
      \big)  \mathrm{d}s
    \Big) \\ & =
    C_\alpha \! \Big(
      t^{\alpha/2-1} \! \nm{g(0)}_{L^2(\Omega)} \!+\!
      A t^{\alpha r+3\alpha/2-1} \int_0^1 \! x^{\alpha/2-1} \big(
        (1-x)^{\alpha r +\alpha-1}   \mathrm{d}x \\
        & \qquad \qquad \qquad \qquad \ \
        \qquad {} + A(T-t)^{3\alpha/2-1} \int_{0}^{t/(T-t)} x^{\alpha/2-1}(1+x)^{\alpha-1} \, \mathrm{d}x
      \Big)\\
      & \leqslant C_{\alpha,T} \big( A + \nm{g(0}_{L^2(\Omega)} \big)
      \big( t^{\alpha/2-1} + \omega_1(T-t) \big)
    \end{align*}
    \end{small}
    for all $ 0 < t < T $. This proves estimates \cref{eq:90,eq:91}. Since the rest of
    this lemma can be proved analogously, this completes the proof.
  \end{proof}

	From \cref{eq:optim_cond} it follows that
	\begin{equation}
	\label{eq:u}
	u = f(p),
	\end{equation}
	where $ f: \mathbb R \to \mathbb R $ is defined by
	\begin{equation*}
	f(v) := \begin{cases}
	u^* & \text{ if } v < -\nu u^*, \\
	-v/\nu & \text{ if } -\nu u^* \leqslant v \leqslant -\nu u_*, \\
	u_* & \text{ if } v > -\nu u_*.
	\end{cases}
	\end{equation*}
  We set
  \[
    f'(r) := \begin{cases}
      0 & \text{ if } r \leqslant -\nu u^*, \\
      -1/\nu & \text{ if  } -\nu u^* < r < \nu u_*, \\
      0 & \text{ if } r \geqslant -\nu u_*.
    \end{cases}
  \]
  For any $ v \in W^{1,1}(0,T;L^2(\Omega)) $, from \cite[Thoerem
  7.8]{Gilbarg2001} we conclude that $ f(v) \in W^{1,1}(0,T;L^2(\Omega)) $ and
  \[
    (f(v))'(t) = f'(v(t)) v'(t), \quad 0 < t < T.
  \]
  Furthermore, applying \cite[Lemma 28.1]{Tartar2007} yields the following
  interpolation result.
	\begin{lemma} 
		\label{lem:fv}
		If $ v \in W^{\beta,q}(0,T;L^2(\Omega)) $ with $ 0 \leqslant \beta \leqslant 1 $
		and $ 1 < q < \infty $, then
		\begin{equation*}
		\nm{f(v)}_{W^{\beta,q}(0,T;L^2(\Omega))}
		\leqslant C_{\nu,u_*,u^*,q,T,\Omega} \big(
		1 + \nm{v}_{W^{\beta,q}(0,T;L^2(\Omega))}
		\big).
		\end{equation*}
	\end{lemma}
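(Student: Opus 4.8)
The plan is to prove the bound at the two endpoints $\beta=0$ and $\beta=1$ and then fill in the intermediate range $0<\beta<1$ by \emph{nonlinear} interpolation; the essential point is that $f$ is not linear, so the classical interpolation of bounded linear operators does not apply directly. I will use throughout the observation that $f(w)=P_{[u_*,u^*]}(-w/\nu)$ is the metric projection of $-w/\nu$ onto the interval $[u_*,u^*]$. Consequently $f$ is globally Lipschitz on $\mathbb R$ with constant $1/\nu$ and satisfies $u_*\leqslant f\leqslant u^*$, and by the chain rule already recorded above (via \cite[Theorem 7.8]{Gilbarg2001}) one has $(f(v))'(t)=f'(v(t))v'(t)$ with $\snm{f'}\leqslant 1/\nu$.

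First I would handle $\beta=0$, where $W^{0,q}(0,T;L^2(\Omega))=L^q(0,T;L^2(\Omega))$. Boundedness of $f$ gives $\nm{f(v)(t)}_{L^2(\Omega)}\leqslant\max\{\snm{u_*},\snm{u^*}\}\,\snm{\Omega}^{1/2}$ for a.e.\ $t$, whence $\nm{f(v)}_{L^q(0,T;L^2(\Omega))}\leqslant C$ independently of $v$; moreover, integrating the pointwise Lipschitz inequality $\nm{f(v)(t)-f(w)(t)}_{L^2(\Omega)}\leqslant \nu^{-1}\nm{v(t)-w(t)}_{L^2(\Omega)}$ shows that $f$ is Lipschitz from $L^q(0,T;L^2(\Omega))$ into itself. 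For $\beta=1$, the chain rule together with $\snm{f'}\leqslant\nu^{-1}$ yields $\nm{(f(v))'}_{L^q(0,T;L^2(\Omega))}\leqslant\nu^{-1}\nm{v'}_{L^q(0,T;L^2(\Omega))}$, which combined with the $L^q$ bound gives the affine estimate $\nm{f(v)}_{W^{1,q}(0,T;L^2(\Omega))}\leqslant C\big(1+\nm{v}_{W^{1,q}(0,T;L^2(\Omega))}\big)$.

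Finally, since the definitions in \cref{sec:pre} give $W^{\beta,q}(0,T;L^2(\Omega))=\big(L^q(0,T;L^2(\Omega)),W^{1,q}(0,T;L^2(\Omega))\big)_{\beta,q}$ for $0<\beta<1$, I would invoke the nonlinear interpolation lemma \cite[Lemma 28.1]{Tartar2007}. Having verified that $f$ is Lipschitz at the lower endpoint $L^q$ and bounded with affine growth at the upper endpoint $W^{1,q}$, that lemma produces exactly $\nm{f(v)}_{W^{\beta,q}(0,T;L^2(\Omega))}\leqslant C\big(1+\nm{v}_{W^{\beta,q}(0,T;L^2(\Omega))}\big)$, covering the whole range $0\leqslant\beta\leqslant1$. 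The main obstacle is precisely this last step: because $f$ is nonlinear with corners at $-\nu u^*$ and $-\nu u_*$ it cannot be interpolated as a linear operator, so one must instead check the hypotheses of \cite[Lemma 28.1]{Tartar2007} — a Lipschitz bound at one endpoint and affine boundedness at the other — and one should note that the value assigned to $f'$ at the two kinks is immaterial, since $v'=0$ a.e.\ on the level sets $\{v=-\nu u^*\}$ and $\{v=-\nu u_*\}$.
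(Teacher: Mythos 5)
Your proposal is correct and follows essentially the same route as the paper: the paper likewise records the Lipschitz/chain-rule properties of $f$ (via \cite[Theorem 7.8]{Gilbarg2001}) to get the endpoint estimates and then obtains the lemma by invoking the nonlinear interpolation result \cite[Lemma 28.1]{Tartar2007}. Your write-up merely makes explicit the endpoint verifications at $\beta=0$ and $\beta=1$ that the paper leaves implicit.
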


  \begin{lemma} 
    \label{lem:u-to-u}
    If $ u \in W^{\beta,q_0}(0,T;L^2(\Omega)) $ with $ \beta \in (0,1-\alpha)
    \setminus \{1-2\alpha\} $ and $ 1<q_0 < 1/(1- \alpha r) $, then
    \begin{small}
    \begin{equation}
      \label{eq:u-to-u-alpha}
      \begin{aligned}
        & \nm{u}_{
          W^{\min\{2\alpha+\beta,1\},q_0}(0,T;L^2(\Omega))
        } \\
        \leqslant{} & C_{\alpha,\beta,q_0,r,\nu,u_*,u^*,T,\Omega} \big(
          1 + \nm{y_0}_{\dot H^{2r}(\Omega)} +
          \nm{y_d}_{H^{1}(0,T;L^2(\Omega))} +
          \nm{u}_{W^{\beta,q_0}(0,T;L^2(\Omega))}
        \big).
      \end{aligned}
    \end{equation}
    \end{small}
  \end{lemma}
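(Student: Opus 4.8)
The plan is to run a bootstrapping argument along the chain $ u \mapsto y \mapsto p \mapsto u $ dictated by \cref{eq:y-p-def,eq:u}: since each of $ S $ and $ S^* $ raises the temporal Sobolev index by $ \alpha $ (cf.~\cref{eq:Sg-real,eq:S^*g-real}) while $ f $ preserves it (\cref{lem:fv}), one full pass should upgrade $ u $ from $ W^{\beta,q_0} $ to $ W^{2\alpha+\beta,q_0} $, capped at the maximal index $ 1 $. First I would record the elementary but crucial inequality $ \beta < 1-\alpha < 1-\alpha r < 1/q_0 $, which follows from $ r<1 $ and $ q_0 < 1/(1-\alpha r) $; by the interpolation remark of \cref{sec:pre} this gives $ W^{\beta,q_0}(0,T;L^2(\Omega)) = {}_0W^{\beta,q_0}(0,T;L^2(\Omega)) $ with equivalent norms, so that $ u $ automatically carries the left endpoint condition needed to feed $ S $.

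For the state $ y = Su + S(\D_{0+}^\alpha y_0) $ I would estimate the two summands separately. The regular part $ Su $ is controlled directly by \cref{eq:Sg-real} (legitimate since $ 0<\beta<1-\alpha $ avoids the forbidden exponents), yielding $ Su \in {}_0W^{\alpha+\beta,q_0}(0,T;L^2(\Omega)) $ with norm bounded by $ \nm{u}_{W^{\beta,q_0}(0,T;L^2(\Omega))} $. The singular part is the homogeneous contribution of the initial datum: writing $ (\D_{0+}^\alpha y_0)(t)=t^{-\alpha}y_0/\Gamma(1-\alpha) $ and using the representation \cref{eq:Sg} together with the kernel bound of \cref{lem:E}, one obtains $ \nm{(S(\D_{0+}^\alpha y_0))'(t)}_{L^2(\Omega)} \leqslant C t^{\alpha r-1}\nm{y_0}_{\dot H^{2r}(\Omega)} $. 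Here the hypothesis $ q_0 < 1/(1-\alpha r) $ is exactly what makes $ t^{\alpha r-1}\in L^{q_0}(0,T) $, so that $ S(\D_{0+}^\alpha y_0)\in W^{1,q_0}(0,T;L^2(\Omega)) $. Combining the two gives $ y \in W^{\min\{\alpha+\beta,1\},q_0}(0,T;L^2(\Omega)) $ with the desired bound.

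The delicate step is passing from $ y-y_d $ to $ p=S^*(y-y_d) $, because $ S^* $ only gains regularity on the right endpoint spaces $ {}^0W^{\cdot,q_0} $, whereas $ y-y_d $ need not vanish at $ t=T $ (the $ y_d(T) $ singularity flagged in the introduction). I would therefore set $ c:=(y-y_d)(T)\in L^2(\Omega) $, which is well defined once $ \gamma:=\min\{\alpha+\beta,1\}>1/q_0 $ (the complementary case $ \gamma<1/q_0 $ needs no correction, since then $ {}^0W^{\gamma,q_0}=W^{\gamma,q_0} $), and split $ y-y_d=(y-y_d-c)+c $. The function $ y-y_d-c $ lies in $ {}^0W^{\gamma,q_0}(0,T;L^2(\Omega)) $, so \cref{eq:S^*g-real} applies and produces $ S^*(y-y_d-c)\in {}^0W^{\alpha+\gamma,q_0}(0,T;L^2(\Omega)) $; the excluded value $ \beta=1-2\alpha $ is precisely what guarantees $ \gamma\ne 1-\alpha $, keeping us off the forbidden exponent in \cref{eq:S^*g-real}. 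For the constant part, $ (S^*c)'(t)=-E(T-t)c $ by \cref{eq:S*g}, whence $ \nm{(S^*c)'(t)}_{L^2(\Omega)}\leqslant C(T-t)^{\alpha-1}\nm{c}_{L^2(\Omega)} $ is in $ L^{q_0}(0,T) $ (again using $ q_0<1/(1-\alpha)$), so $ S^*c\in W^{1,q_0}(0,T;L^2(\Omega)) $. Since $ \alpha+\gamma\geqslant\min\{2\alpha+\beta,1\} $, both pieces embed into $ W^{\min\{2\alpha+\beta,1\},q_0}(0,T;L^2(\Omega)) $, and a continuous dependence estimate bounds $ \nm{c}_{L^2(\Omega)} $ by $ \nm{y-y_d}_{W^{\gamma,q_0}} $, hence by the data and $ \nm{u}_{W^{\beta,q_0}} $.

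Finally, applying \cref{lem:fv} with exponent $ \min\{2\alpha+\beta,1\}\leqslant 1 $ to $ u=f(p) $ converts the bound on $ p $ into the claimed bound on $ u $. I expect the main obstacle to be the treatment of $ p $: correctly subtracting the nonvanishing trace at $ t=T $ and matching the endpoint and trace hypotheses of \cref{eq:S^*g-real}, in tandem with verifying that the two constraints $ \beta\ne 1-2\alpha $ and $ q_0<1/(1-\alpha r) $ line up exactly with the forbidden exponent and with the integrability of the singular initial contribution.
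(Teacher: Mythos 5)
Your bootstrap $u \mapsto y \mapsto p \mapsto u$ is exactly the paper's skeleton, and your exponent bookkeeping (the identification $W^{\beta,q_0} = {}_0W^{\beta,q_0}$ from $\beta q_0 < 1$, the use of \cref{eq:Sg-real} for $Su$, the bound $\nm{(S\D_{0+}^\alpha y_0)'(t)}_{L^2(\Omega)} \leqslant C t^{\alpha r - 1}\nm{y_0}_{\dot H^{2r}(\Omega)}$ made $L^{q_0}$-integrable by $q_0 < 1/(1-\alpha r)$, and the final application of \cref{lem:fv} at the capped index) matches the paper's proof step for step. Where you genuinely diverge is the passage through $S^*$: the paper only writes out the case $\beta < 1-2\alpha$, where no trace correction at $t=T$ is needed at all, because there $\alpha+\beta < 1-\alpha < 1-\alpha r < 1/q_0$, so $(\alpha+\beta)q_0 < 1$ and $W^{\alpha+\beta,q_0}(0,T;L^2(\Omega)) = {}^0W^{\alpha+\beta,q_0}(0,T;L^2(\Omega))$ directly; your subtraction of $c = (y-y_d)(T)$ is superfluous in that regime. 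However, your device is precisely what the unwritten ``analogous'' case $1-2\alpha < \beta < 1-\alpha$ requires, since there $(\alpha+\beta)q_0$ can exceed $1$; indeed the paper itself deploys the same trick later, in \cref{lem:u-to-u-p}, where it splits off $(Su_1 - y_d)(T)$ and handles the constant part through the kernel bound $\nm{E(T-t)}_{\mathcal L(L^2(\Omega))} \lesssim (T-t)^{\alpha-1}$ of \cref{lem:E}, exactly as you do with $(S^*c)'(t) = -E(T-t)c$. So your write-up is in effect more complete than the paper's, at the cost of carrying machinery not needed in the displayed case.

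Two minor points to tighten. First, your dichotomy $\gamma > 1/q_0$ versus $\gamma < 1/q_0$ omits the borderline $\gamma q_0 = 1$, which is attainable under the lemma's hypotheses (e.g.\ $\alpha = 1/2$, $r = 1/2$, $\beta = 0.3$, $q_0 = 1.25$); there the trace $(y-y_d)(T)$ is not defined on $W^{\gamma,q_0}$ and the identification ${}^0W^{\gamma,q_0} = W^{\gamma,q_0}$ also fails, so you need an $\epsilon$-reduction of the input index (harmless for the iteration in which the lemma is used, but strictly a hole in the statement as you argue it). Second, the estimate $\nm{(S\D_{0+}^\alpha y_0)'(t)}_{L^2(\Omega)} \leqslant C t^{\alpha r-1}\nm{y_0}_{\dot H^{2r}(\Omega)}$ does not follow from \cref{eq:Sg} and \cref{lem:E} alone, since \cref{lem:E} gives only $\mathcal L(L^2(\Omega))$ and $\mathcal L(L^2(\Omega),\dot H^1(\Omega))$ bounds and no $\dot H^{2r}(\Omega) \to L^2(\Omega)$ smoothing with the $t^{\alpha r}$ gain; the paper obtains it as \cref{eq:SDalpha_v_h1} via the Mittag--Leffler representation \cref{eq:SDalpha_v}, and you should cite that instead.
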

	\begin{proof}
    We only prove the case $ \beta <1-2\alpha $, since the other cases can be
    proved analogously. For simplicity, we denote by $ C $ in this proof a
    generic positive constant depending only on $ \alpha $, $ \beta $, $ q_0 $,
    $r$, $ \nu $, $ u_* $, $ u^* $, $ T $ and $ \Omega $, and its value may
    differ in different places. Some straightforward calculations give
		\[
		\nm{S\D_{0+}^\alpha y_0}_{W^{1,q_0}(0,T;L^2(\Omega))}
		\leqslant C \nm{y_0}_{\dot H^{2r}(\Omega)}
		\quad\text{(by \cref{eq:SDalpha_v_h1})}
		\]
		and
		\begin{align*}
		\nm{Su}_{{}_0W^{\alpha+\beta,q_0}(0,T;L^2(\Omega))}
		& \leqslant C \nm{u}_{{}_0W^{\beta,q_0}(0,T;L^2(\Omega))}
		\quad\text{(by \cref{eq:Sg-real})} \\
		& \leqslant C \nm{u}_{W^{\beta,q_0}(0,T;L^2(\Omega))}
		\quad\text{(by the fact $ \beta q_0 < 1 $),}
		\end{align*}
		so that
		\begin{align*}
		\nm{y}_{W^{\alpha+\beta,q_0}(0,T;L^2(\Omega))} &=
		\nm{S(u+\D_{0+}^\alpha y_0)}_{W^{\alpha+\beta,q_0}(0,T;L^2(\Omega))} \\
		& \leqslant C\big(
		\nm{u}_{W^{\beta,q_0}(0,T;L^2(\Omega))} +
		\nm{y_0}_{\dot H^{2r}(\Omega)}
		\big).
		\end{align*}
		By \cref{eq:S^*g-real} and the fact $(\alpha+\beta)q_0<1$,
		\begin{align*}
		& \nm{p}_{
			{}^0W^{2\alpha+\beta,q_0}(0,T;L^2(\Omega))
		} =
		\nm{S^*(y-y_d)}_{
			{}^0W^{2\alpha+\beta,q_0}(0,T;L^2(\Omega))
		} \\
		\leqslant{} & C \nm{y-y_d}_{{}^0W^{\alpha+\beta,q_0}(0,T;L^2(\Omega))}
		\leqslant C \nm{y-y_d}_{W^{\alpha+\beta,q_0}(0,T;L^2(\Omega))} \\
		\leqslant{} & C\big(
		\nm{y}_{W^{\alpha+\beta,q_0}(0,T;L^2(\Omega))} +
		\nm{y_d}_{H^1(0,T;L^2(\Omega))}
		\big).
		\end{align*}
		In addition,
		\begin{align*}
		\nm{u}_{W^{2\alpha+\beta,q_0}(0,T;L^2(\Omega))} &=
		\nm{f(p)}_{W^{2\alpha+\beta,q_0}(0,T;L^2(\Omega))}
		\quad\text{(by \cref{eq:u})} \\
		& \leqslant C \big(
		1 + \nm{p}_{
			W^{2\alpha+\beta,q_0}(0,T;L^2(\Omega))
		}
		\big) \quad\text{(by \cref{lem:fv}).}
		\end{align*}
		Finally, combining the above three estimates proves \cref{eq:u-to-u-alpha} and
		hence this lemma.
	\end{proof}

\begin{lemma} 
    \label{lem:u-to-u-p}
    Assume that $ 1 < q \leqslant 2 $, $ 0<A<\infty $, and $ u(t) = u_1(t) +
    u_2(t) $ for each $ 0 < t < T $, with $ u_1 \in {}_0W^{1,q}(0,T;L^2(\Omega))
    $ and $ u_2 \in W_\text{loc}^{1,\infty}(0,T;L^2(\Omega)) $. If
    \begin{equation*}
      \nm{u_2'(t)}_{L^2(\Omega)} \leqslant
      A \big( t^{\alpha r +\alpha -1}+ (T-t)^{\alpha -1} \big) ,
      \quad\text{a.e.}~ 0 < t < T,
    \end{equation*}
    then there exists a decomposition
    \begin{equation}
      \label{eq:u1+u2=u}
      u(t) = \widetilde u_1(t) + \widetilde u_2(t),
      \quad 0 < t < T,
    \end{equation}
    such that
    \begin{equation}
      \label{eq:wtu1}
      \nm{\widetilde u_1}_{{}_0W^{1,q+\alpha/(1-\alpha)}(0,T;L^2(\Omega))}
      \leqslant C\big(
        \nm{u_1}_{{}_0W^{1,q}(0,T;L^2(\Omega))} +
        \nm{y_d}_{H^{1}(0,T;L^2(\Omega))}
      \big)
    \end{equation}
    and
    \begin{small}
    \begin{equation}
      \label{eq:wtu2}
      \begin{aligned}
        \nm{\widetilde u_2'(t)}_{L^2(\Omega)} & \leqslant
        C \Big(
          A + \nm{u(0)}_{L^2(\Omega)} +
          \nm{y_0}_{\dot H^{2r}(\Omega)} +
          \nm{y_d}_{H^1(0,T;L^2(\Omega))} \\
          & \qquad\qquad\qquad {} + \nm{u_1}_{{}_0W^{1,q}(0,T;L^2(\Omega))}
        \Big) \big( t^{\alpha r +\alpha-1} + (T-t)^{\alpha -1} \big)
      \end{aligned}
    \end{equation}
    \end{small}
    for almost all $ 0 < t < T $,
    where $ C $ is a positive constant depending
    only on $ \alpha $, $r$, $ \nu $, $ u_* $, $ u^* $, $ q $, $ T $ and $ \Omega $.
\end{lemma}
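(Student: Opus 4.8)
The plan is to exploit the three identities that pin down $u$, namely $y=S(u+\D_{0+}^\alpha y_0)$, $p=S^*(y-y_d)$ and $u=f(p)$ from \cref{eq:y-p-def,eq:u}, and to propagate the assumed splitting $u=u_1+u_2$ through the solution operators $S$ and $S^*$. First I would decompose the state as $y = Su_1 + \big(Su_2 + S\D_{0+}^\alpha y_0\big) =: Y_1+Y_2$. Since $u_1\in{}_0W^{1,q}(0,T;L^2(\Omega))$ and $1\notin\{1-\alpha,2-\alpha\}$, estimate \cref{eq:Sg-real} with $\beta=1$ gives $Y_1\in{}_0W^{1+\alpha,q}(0,T;L^2(\Omega))$ with $\nm{Y_1}_{{}_0W^{1+\alpha,q}(0,T;L^2(\Omega))}\leqslant C\nm{u_1}_{{}_0W^{1,q}(0,T;L^2(\Omega))}$. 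For the singular part, the hypothesis on $u_2'$ is exactly \cref{eq:SgS*g-g}, so the first half of \cref{lem:SgS*g} controls $\nm{(Su_2)'(t)}_{L^2(\Omega)}$ by $t^{\alpha-1}+\omega_2(T-t)$, while the growth estimate for $S\D_{0+}^\alpha y_0$ (of the type used in the proof of \cref{lem:u-to-u}) contributes a term $\lesssim t^{\alpha r-1}\nm{y_0}_{\dot H^{2r}(\Omega)}$; as $\alpha r-1<\alpha-1$, adding these yields $\nm{Y_2'(t)}_{L^2(\Omega)}\leqslant A_1\big(t^{\alpha r-1}+\omega_2(T-t)\big)$ with $A_1=C(A+\nm{u(0)}_{L^2(\Omega)}+\nm{y_0}_{\dot H^{2r}(\Omega)})$, where $\nm{u_2(0)}_{L^2(\Omega)}=\nm{u(0)}_{L^2(\Omega)}$ because $u_1$ vanishes at $0$.

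Next I would apply $S^*$ to $y-y_d=(Y_1-y_d)+Y_2$. The term $S^*Y_2$ is handled by the second half of \cref{lem:SgS*g}, whose hypothesis is precisely the bound on $Y_2'$ just obtained, giving $\nm{(S^*Y_2)'(t)}_{L^2(\Omega)}\leqslant C(A_1+\nm{Y_2(T)}_{L^2(\Omega)})\big(t^{\alpha r+\alpha-1}+(T-t)^{\alpha-1}\big)$. For $S^*(Y_1-y_d)$ the difficulty is that $Y_1-y_d$ need not vanish at $T$, so I would peel off its terminal value and write $Y_1-y_d=\big[(Y_1-y_d)-(Y_1-y_d)(T)\big]+(Y_1-y_d)(T)$. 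The bracketed function lies in ${}^0W^{1,q}(0,T;L^2(\Omega))$ (here $q\leqslant 2$ is used so that $y_d\in H^1(0,T;L^2(\Omega))\subseteq W^{1,q}(0,T;L^2(\Omega))$), and \cref{eq:S^*g-real} with $\beta=1$ places its $S^*$-image in ${}^0W^{1+\alpha,q}(0,T;L^2(\Omega))$; the constant terminal part, being time-independent, has $S^*$-image with derivative $-E(T-t)(Y_1-y_d)(T)$, which by \cref{lem:E} is $\lesssim (T-t)^{\alpha-1}\nm{(Y_1-y_d)(T)}_{L^2(\Omega)}$. Collecting terms produces a decomposition $p=p_{\mathrm{reg}}+p_{\mathrm{sing}}$ with $p_{\mathrm{reg}}=S^*\big[(Y_1-y_d)-(Y_1-y_d)(T)\big]\in{}^0W^{1+\alpha,q}(0,T;L^2(\Omega))$ and $\nm{p_{\mathrm{sing}}'(t)}_{L^2(\Omega)}\leqslant C\,B\,\big(t^{\alpha r+\alpha-1}+(T-t)^{\alpha-1}\big)$, where $B$ collects $A$, $\nm{u(0)}_{L^2(\Omega)}$, $\nm{y_0}_{\dot H^{2r}(\Omega)}$, $\nm{y_d}_{H^1(0,T;L^2(\Omega))}$ and $\nm{u_1}_{{}_0W^{1,q}(0,T;L^2(\Omega))}$.

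Finally I would transfer this back to $u$. Since $u=f(p)$ and $p\in W^{1,1}(0,T;L^2(\Omega))$, the chain rule $(f(p))'=f'(p)p'$ (stated before \cref{lem:fv}) lets me set $\widetilde u_1(t):=\int_0^t f'(p(s))\,p_{\mathrm{reg}}'(s)\,\mathrm ds$ and $\widetilde u_2:=u-\widetilde u_1$, so that $\widetilde u_1(0)=0$, $\widetilde u_2'=f'(p)\,p_{\mathrm{sing}}'$, and, using $\snm{f'}\leqslant 1/\nu$, the bound \cref{eq:wtu2} follows directly from the estimate on $p_{\mathrm{sing}}'$. For \cref{eq:wtu1} I would use $\nm{\widetilde u_1'}_{L^{\tilde q}(0,T;L^2(\Omega))}\leqslant \nu^{-1}\nm{p_{\mathrm{reg}}'}_{L^{\tilde q}(0,T;L^2(\Omega))}$ together with the one-dimensional Sobolev embedding ${}^0W^{1+\alpha,q}(0,T;L^2(\Omega))\hookrightarrow W^{1,\tilde q}(0,T;L^2(\Omega))$ that trades the extra smoothness $\alpha$ for integrability; a short computation shows the admissible exponent $q/(1-\alpha q)$ dominates $\tilde q=q+\alpha/(1-\alpha)$ for every $q>1$, and the case $\alpha q\geqslant 1$ (where the embedding reaches every finite exponent) is even easier. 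The main obstacle is the bookkeeping at the right endpoint: reconciling the left-vanishing spaces natural to $S$ with the right-vanishing spaces natural to $S^*$, isolating the terminal contributions $(Y_1-y_d)(T)$ and $Y_2(T)$ that generate the $(T-t)^{\alpha-1}$ singularity, and confirming that the gained smoothness suffices to reach the claimed integrability uniformly in $\alpha$ and $q$.
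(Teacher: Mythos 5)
Your proposal is correct and takes essentially the same route as the paper: by linearity of $S^*$ your $p_{\mathrm{reg}}$ and $p_{\mathrm{sing}}$ coincide with the paper's $\mathbb I_1 = S^*\big(Su_1 - y_d - (Su_1-y_d)(T)\big)$ and $\mathbb I_2$, and your definitions $\widetilde u_1(t)=\int_0^t f'(p(s))\,p_{\mathrm{reg}}'(s)\,\mathrm{d}s$, $\widetilde u_2 = u - \widetilde u_1$, together with the embedding of ${}^0W^{1+\alpha,q}(0,T;L^2(\Omega))$ into ${}^0W^{1,q+\alpha/(1-\alpha)}(0,T;L^2(\Omega))$, are exactly the steps used there. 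The only cosmetic differences are that you handle the terminal constant $(Su_1-y_d)(T)$ via the explicit kernel bound $\nm{E(T-t)}_{\mathcal L(L^2(\Omega))}\lesssim (T-t)^{\alpha-1}$ rather than citing \cref{eq:(S^*v)'-growth}, and you spell out the exponent arithmetic for the embedding that the paper leaves implicit.
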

\begin{proof}
  A simple calculation gives, by \cref{eq:y-p-def}, that
  \begin{equation}
    \label{eq:P=I1+I2}
    p = \mathbb I_1 + \mathbb I_2,
  \end{equation}
  where
  \begin{align*}
    \mathbb I_1 &:= S^*\big( Su_1 - y_d - (Su_1 - y_d)(T) \big), \\
    \mathbb I_2 &:= S^*\big(
      S(u_2 + \D_{0+}^\alpha y_0) + (Su_1 - y_d)(T)
    \big).
  \end{align*}
  We have
  \begin{align*}
    & \nm{\mathbb I_1}_{{}^0W^{1+\alpha,q}(0,T;L^2(\Omega))} \\
    \leqslant{} &
    C \nm{ Su_1 - y_d - (Su_1 - y_d)(T) }_{
      {}^0W^{1,q}(0,T;L^2(\Omega))
    } \quad\text{(by \cref{eq:S^*g-real})} \\
    \leqslant{} &
    C \big(
      \nm{Su_1}_{{}_0W^{1,q}(0,T;L^2(\Omega))} +
      \nm{y_d}_{H^{1}(0,T;L^2(\Omega))}
    \big) \\
    \leqslant{} &
    C \big(
      \nm{u_1}_{{}_0W^{1,q}(0,T;L^2(\Omega))} +
      \nm{y_d}_{H^{1}(0,T;L^2(\Omega))}
    \big) \quad\text{(by \cref{eq:Sg-real}).}
  \end{align*}
  As $ \scriptstyle {}^0W^{1+\alpha,q}(0,T;L^2(\Omega)) $ is continuously
  embedded into $ \scriptstyle {}^0W^{1,q+\alpha/(1-\alpha)}(0,T;L^2(\Omega)) $,
  it holds
  \begin{equation}
    \label{eq:E1}
    \nm{\mathbb I_1}_{{}^0W^{1,q+\alpha/(1-\alpha)}(0,T;L^2(\Omega))}
    \leqslant C \big(
      \nm{u_1}_{{}_0W^{1,q}(0,T;L^2(\Omega))} +
      \nm{y_d}_{H^{1}(0,T;L^2(\Omega))}
    \big).
  \end{equation}
  From \cref{eq:90} it follows that
  \begin{align*}
    \nm{(Su_2)'(t)}_{L^2(\Omega)}\!
    \leqslant C(A+u_2(0))\big( t^{\alpha-1}+ \omega_2(T-t) \big).
  \end{align*}
  Therefore,  the fact that
  \begin{small}
  \begin{align*}
    \nm{(Su_1)(T)}_{L^2(\Omega)} \!\leqslant\!
    C \nm{Su_1}_{{}_0W^{1+\alpha/2,q}(0,T;L^2(\Omega))}
    \!\leqslant\! C \nm{u_1}_{{}_0W^{1\!-\!\alpha/2,q}(0,T;L^2(\Omega))}
    \ \ \text{(by \cref{eq:Sg-real})}
  \end{align*}
  \end{small}
  and
  \[
    \nm{(S \D_{0+}^\alpha y_0)' (t)} \leqslant Ct^{\alpha r-1}\nm{y_0}_{\dot{H}^{2r}(\Omega)} \quad \text{(by \cref{eq:SDalpha_v_h1})},
  \]
  together with   \cref{lem:SgS*g} and
  \cref{eq:(S^*v)'-growth}, yields
  \begin{small}
  \begin{equation}
    \label{eq:E2}
    \begin{aligned}
      \nm{\mathbb I_2'(t)}_{L^2(\Omega)} & \leqslant
      C \Big(
        A + \nm{u(0)}_{L^2(\Omega)} +
        \nm{y_0}_{\dot H^{2r}(\Omega)} +
        \nm{u_1}_{{}_0W^{1,q}(0,T;L^2(\Omega))} \\
        & \qquad\qquad\qquad\quad {} +
        \nm{y_d}_{H^{1}(0,T;L^2(\Omega))}
      \Big) \big(
        t^{\alpha r +\alpha -1}+ (T-t)^{\alpha-1}
      \big)
    \end{aligned}
  \end{equation}
  \end{small}
  for all $ 0 < t < T $.

  Finally, letting
  \begin{align}
    \widetilde u_1(t) &:= \int_0^t
    f'( p(s) )
    \mathbb I_1'(s) \, \mathrm{d}s,
    \quad 0 < t < T, \label{eq:wtu1-def} \\
    \widetilde u_2(t) &:= u(0) + \int_0^t
    f'( p(s) )
    \mathbb I_2'(s) \, \mathrm{d}s,
    \quad 0 < t < T, \label{eq:wtu2-def}
  \end{align}
  by \cref{eq:u,eq:P=I1+I2} we obtain
  \begin{align*}
    \widetilde u_1(t) \!+\! \widetilde u_2(t) & =
    u(0) +f(p(t))- f(p(0))
    = u(t), \quad 0 < t \leqslant T,
  \end{align*}
  which proves \cref{eq:u1+u2=u}. Furthermore, \cref{eq:wtu1} follows from
  \cref{eq:E1,eq:wtu1-def}, and \cref{eq:wtu2} follows from
  \cref{eq:E2,eq:wtu2-def}. This completes the proof.
\end{proof}

Finally, we are in a position to prove \cref{thm:regu-u,thm:regu-y-p}.

\medskip\noindent{\bf Proof of \cref{thm:regu-u}}. By the fact $ u\in
  U_{ad}\subset L^2(0,T;L^2(\Omega)) $, a similar proof as that of
  \cref{lem:u-to-u} gives
  \[
    \nm{u}_{W^{\alpha,q_0}(0,T;L^2(\Omega))}
    \leqslant C,
  \]
  and then applying \cref{lem:u-to-u} several times yields
  \[
    \nm{u}_{W^{1,q_0}(0,T;L^2(\Omega))} \leqslant C,
  \]
  where $ q_0 := \frac12(1+1/(1-\alpha r)) < 1/(1-\alpha r) $
  and $ C $ is a positive constant depending only on $ \alpha $, $r$, $ \nu $,
  $ u_* $, $ u^* $, $ y_0 $, $ y_d $, $ T $ and $ \Omega $. Letting
  \[
    m := \min\{n \in \mathbb N:\ q_0+n\alpha/(1-\alpha) \geqslant 2 \}
  \]
  and applying \cref{lem:u-to-u-p} $ m $ time(s) then prove \cref{thm:regu-u} ($u_2=0$ for the first time).
\hfill\ensuremath{\blacksquare}

\medskip\noindent{\bf Proof of \cref{thm:regu-y-p}}. Let $ y_1 := S u_1 $ and
  $ y_2 := S(u_2 + \D_{0+}^\alpha y_0) $, where $ u_1 $ and $ u_2 $ are
  defined in \cref{thm:regu-u}. By \cref{eq:Sg-real-complex,eq:u1} we have
  \begin{equation}
    \nm{y_1}_{{}_0H^{1+\alpha}(0,T;L^2(\Omega))} +
    \nm{y_1}_{{}_0H^1(0,T;\dot H^2(\Omega))}
    \leqslant C,
  \end{equation}
  and by \cref{eq:u2}, \cref{eq:90}, \cref{eq:91} and \cref{eq:SDalpha_v_h1}
  we have that $$ y_2 \in C([0,T];L^2(\Omega)) \bigcap C^1((0,T);\dot
  H^1(\Omega)) $$ and
  \begin{align*}
    &  \nm{y_2'(t)}_{L^2(\Omega)} \leqslant C
    \big( t^{\alpha r-1} + \omega_2(T-t) \big),
    \quad 0 < t < T, \\
    &\nm{y_2'(t)}_{\dot H^1(\Omega)} \leqslant C
    \big( t^{(r-1/2)\alpha-1} + \omega_1(T-t) \big),
    \quad 0 < t < T,
  \end{align*}
  where $ C $ is a positive constant depending only on $ \alpha $, $r$, $ \nu $, $
  u_* $, $ u^* $, $ y_0 $, $ y_d $, $ T $ and $ \Omega $.
  Hence, $ y = y_1 + y_2
  $ is the desired decomposition in \cref{thm:regu-y-p}. Since the rest of this
  theorem can be proved analogously, this concludes the proof.
\hfill\ensuremath{\blacksquare}

\section{Discretization}
\label{sec:conv}
Assume that $0<r<1$, $ M >1 $ is an integer and
\begin{equation}
  \label{eq:sigma}
  \left\{
    \begin{array}{ll} \sigma_1 >\max\left\{
        1, \frac{2-\alpha}{(2r-1)\alpha+1}
      \right\}, \\
      \sigma_2> \max\left\{
        1, \frac{2-\alpha}{\alpha+1}
      \right\}.
    \end{array}
  \right.
\end{equation}
Let $ 0 = t_0 < t_1 < \ldots < t_{2M} = T$ be a graded partition of the temporal
interval $[0, T]$ with
\begin{align*}
  t_j  :=\left\{\begin{array}{ll}
  \big( \frac jM \big)^{\sigma_{1}} \frac{T}{2},& \text{ if }
  \ 0 \leqslant j \leqslant M, \\\\
  T - \big( 2-\frac jM \big)^{\sigma_{2}} \frac{T}{2}, & \text{ if }
  \ M < j \leqslant 2M.
\end{array}
  \right.
\end{align*}
Define $\tau_j := t_j-t_{j-1}$ for each $ 1 \leqslant j \leqslant 2M $ and set $
\tau := \max\{\tau_j: 1 \leqslant j \leqslant 2M\}.$ Let $ \mathcal K_h $ be a
conventional conforming and shape-regular triangulation of $ \Omega $ consisting
of $ d $-simplexes with mesh size $h:=\max_{K \in \mathcal K_h} \{
\text{diameter of } K\}$. Then we introduce the following finite element spaces:
\begin{align*}
  \mathcal V_h &:= \left\{
    v_h \in \dot H^1(\Omega):\
    \text{$ v_h $ is linear on $ K $ for each $ K \in \mathcal K_h $}
  \right\}, \\
  \mathcal W_{h\tau} &:= \left\{
    V \in L^2(0,T;\mathcal V_h):\,
    V \ \text{is constant on $ (t_{j-1},t_j) $ for each }
    1 \leqslant j \leqslant 2M
  \right\}.
\end{align*}
For any $ g \in L^2(0,T;L^2(\Omega)) $, define $ S_{h\tau}g$, $S^*_{h\tau}g \in
\mathcal W_{h\tau}$ respectively by that
\begin{align}
  \big(
    \D_{0+}^{\alpha/2} S_{h\tau} g ,\D_{T-}^{\alpha/2} V
  \big)_{\Omega \times (0,T)} +\big(
    \nabla S_{h\tau} g , \nabla  V
  \big)_{\Omega \times (0,T)} =
  \dual{g,V}_{\Omega \times (0,T)},  \label{def:shtau} \\
  \big(
    \D_{T-}^{\alpha/2} S^*_{h\tau} g ,\D_{0+}^{\alpha/2} V
  \big)_{\Omega \times (0,T)} + \big(
    \nabla S^*_{h\tau} g , \nabla  V
  \big)_{\Omega \times (0,T)} =
  \dual{g,V}_{\Omega \times (0,T)}, \label{def:shtau*}
\end{align}
for all $ V \in \mathcal W_{h\tau} $. It is evident that
\begin{equation}
  \label{eq:Shtau-Shtau*}
  (S_{h \tau} g_1, g_2)_{\Omega \times (0,T)} =
  (g_1, S_{h \tau}^*g_2)_{\Omega \times (0,T)}
\end{equation}
for all $ g_1, g_2 \in \mathcal W_{h\tau} $.
\begin{remark}
  By \cref{lem:dual} and the Lax-Milgram theorem, it is easy to verify that the
  operators $ S_{h\tau}$ and $S^*_{h\tau}$ are well defined.
\end{remark}
With the above two operators, we consider the following optimal control problem:
\begin{equation}
  \label{eq:model2}
  \min\limits_{
    \substack{
      U \in U_{\text{ad}}
    }
  } J(U) = \frac12 \nm{S_{h\tau} (U+\D_{0+}^{\alpha}y_0) - y_d}_{L^2(0,T;L^2(\Omega))}^2 +
  \frac\nu2 \nm{U}_{L^2(0,T;L^2(\Omega))}^2.
\end{equation}
Similar to the continuous case, there exists a unique discrete control $ U \in
U_\text{ad} $ such that
\begin{equation}
  \label{eq:optim_cond_full}
  \left(
    S^*_{h\tau}  \big(
      S_{h\tau} (U+\D_{0+}^{\alpha} y_0) - y_d
    \big)+ \nu U, \, v-U
  \right)_{\Omega \times (0,T)} \geqslant 0, \quad  \forall v\in U_{ad}.
\end{equation}
The corresponding discrete state $Y$ and co-state $P$ are defined respectively
by
\begin{equation}
  \label{eq:Y-P}
  Y:=S_{h\tau}(U+\D_{0+}^{\alpha} y_0)
  \quad \text{and} \quad P:=S_{h\tau}^*(Y-y_d).
\end{equation}
\begin{remark}
  Since the co-state $P$ is in the finite dimensional space $\mathcal
  W_{h\tau}$, the control $u$ is indirectly discretized by the projection
  $$U=Q_{U_{ad}}(-P/\nu),$$ where $Q_{U_{ad}}$ is the $L^2$ projection onto the
  admissible set $U_{ad}$. This is the key point of the variational
  discretization concept (cf. \cite{M2008}).
\end{remark}

In this section, we use $\bar a \lesssim \bar b$ to denote $\bar a \leqslant C
\bar b$, where $C$ is a positive constant independent of $h$ and $M$. The main
result of this section is the following theorem.
\begin{theorem}
  \label{th:ape}
  Assume that $ 0 < r < \min\{1/2, (1-\alpha)/\alpha\} $. Let $u$ and $y$ be the
  control and state of \cref{eq:model} respectively, and let $U$ and $Y$ be the
  control and state of \cref{eq:model2} respectively. If $y_0 \in \dot
  H^{2r}(\Omega) $ and $y_d \in H^{1}(0,T;L^2(\Omega)) $, then
  \begin{equation}
    \label{eq:main-error-est}
    \nm{u-U}_{L^2(0,T;L^2(\Omega))} +
    \nm{y-Y}_{L^2(0,T;L^2(\Omega))}
    \lesssim h^{\min\{1/\alpha+2r,2\}} + M^{-1}.
  \end{equation}
\end{theorem}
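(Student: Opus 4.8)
```latex
\textbf{Proof proposal.} The plan is to follow the standard three-term error
decomposition for variationally discretized optimal control problems, but to
feed in the refined regularity of \cref{thm:regu-u,thm:regu-y-p} and the graded
mesh condition \cref{eq:sigma} at the precise points where the singular parts
$u_2,y_2,p_2$ enter. First I would test the continuous optimality condition
\cref{eq:optim_cond} with $v=U$ and the discrete one
\cref{eq:optim_cond_full} with $v=u$, add the two inequalities, and rearrange to
extract the coercive term $\nu\nm{u-U}_{L^2(0,T;L^2(\Omega))}^2$ on the left.
The right-hand side will consist of cross terms comparing the continuous
solution operators $S,S^*$ with their discrete counterparts
$S_{h\tau},S^*_{h\tau}$ acting on $u$, $\D_{0+}^\alpha y_0$, and $y_d$; using
the self-adjointness relations \cref{eq:S-S*,eq:Shtau-Shtau*} I would convert all
of these into inner products of the form
$\dual{(S-S_{h\tau})(\cdot),\cdot}$ and
$\dual{(S^*-S^*_{h\tau})(\cdot),\cdot}$, so that the control error is controlled
by the \emph{state/co-state approximation errors} of the two operators. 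This is
the purely algebraic part and should be routine.

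The crux is then an a priori error estimate for $S-S_{h\tau}$ (and symmetrically
$S^*-S^*_{h\tau}$) in the $L^2(0,T;L^2(\Omega))$ norm, split into a spatial
contribution of order $h^{\min\{1/\alpha+2r,2\}}$ and a temporal contribution of
order $M^{-1}$. The plan is to prove these two contributions against the
weighted pointwise bounds of \cref{thm:regu-y-p}: for the spatial part I would
use the Ritz/elliptic projection into $\mathcal V_h$ together with
\cref{eq:regu-p1}--\cref{eq:regu-p2-h1}, and for the temporal part I would use
that the piecewise-constant dG-in-time scheme is first order on each subinterval
and bound the local consistency error on $(t_{j-1},t_j)$ by
$\tau_j\,\nm{(Sg)'}_{L^2(t_{j-1},t_j;L^2(\Omega))}$ and analogously for the
co-state. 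Summing these local contributions over the graded partition
\cref{eq:sigma}, the integrable singularities $t^{\alpha r-1}$,
$t^{\alpha r-\alpha/2-1}$ at $t=0$ and the boundary-layer factors
$(T-t)^{\alpha-1}$, $\omega_1(T-t)$, $\omega_2(T-t)$ at $t=T$ are exactly
compensated by the grading exponents $\sigma_1,\sigma_2$: the thresholds in
\cref{eq:sigma} are precisely chosen so that
$\sum_j \tau_j^2 \sup_{(t_{j-1},t_j)} \nm{(\cdot)'}^2$ stays of order $M^{-2}$.

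The main obstacle I anticipate is controlling these graded sums sharply near the
two endpoints simultaneously, because the singular exponents differ at $t=0$ and
$t=T$ and are further split between the $L^2(\Omega)$ and $\dot H^1(\Omega)$
norms with different weights; the key inequalities are the quadrature/summation
estimates
\[
  \sum_{j}\int_{t_{j-1}}^{t_j}\!\!\big(t_j-t\big)\,
  t^{2(\alpha r-1)}\,\mathrm{d}t \lesssim M^{-2}
  \quad\text{and}\quad
  \sum_{j}\int_{t_{j-1}}^{t_j}\!\!\big(t_j-t\big)\,(T-t)^{2(\alpha-1)}\,
  \mathrm{d}t \lesssim M^{-2},
\]
which hold exactly under \cref{eq:sigma}. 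Verifying these (and their
$\omega_1,\omega_2,\ln$ variants at the critical exponents $\alpha=2/3,1/2$)
against the grading is the delicate bookkeeping step. Once the operator error
estimates are in hand, I would close the argument as follows: the control error
follows immediately from the coercivity extraction above, and the state error
$\nm{y-Y}_{L^2(0,T;L^2(\Omega))}$ is bounded by
$\nm{(S-S_{h\tau})(u+\D_{0+}^\alpha y_0)}_{L^2(0,T;L^2(\Omega))}
+\nm{S_{h\tau}(u-U)}_{L^2(0,T;L^2(\Omega))}$, where the first term is the already
estimated operator error and the second is dominated by $\nm{u-U}$ via stability
of $S_{h\tau}$, giving \cref{eq:main-error-est}.
```
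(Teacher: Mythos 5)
Your opening reduction (testing the two optimality conditions against each other and converting everything into errors of the solution operators applied to the continuous data) is the same Hinze-type reduction the paper uses, so that part is sound. The gaps are in both of your core estimates. For the temporal part, the plan ``first order on each subinterval, then sum $\sum_j \tau_j^2 \sup \nm{(\cdot)'}^2$'' fails at the left endpoint: under the hypothesis $r < \min\{1/2,(1-\alpha)/\alpha\}$ one always has $\alpha r < 1/2$, hence $2(\alpha r - 1) < -1$, and your first displayed ``key inequality'' is literally divergent, since on the first subinterval $\int_0^{t_1}(t_1-t)\,t^{2(\alpha r-1)}\,\mathrm{d}t = \infty$ (the factor $t_1-t$ does not vanish at $t=0$). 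The paper's \cref{lem:I-Qtau} avoids ever integrating the squared derivative bound across $(0,t_1)$: there the error is controlled by function-value and H\"older-type bounds ($\nm{g_1}_{L^\infty(0,t_1;L^2(\Omega))} \lesssim t_1^{\alpha r+\alpha}$, differences $s^{\alpha r+\alpha}-t^{\alpha r+\alpha}$ in the terms $\mathbb I_1,\mathbb I_3$). More structurally, a local-truncation-plus-stability summation is not available for the nonlocal operator $\D_{0+}^\alpha$, and it is not the paper's mechanism: the paper bounds the interpolation error $\nm{(I-Q_\tau)y_h}_W \lesssim M^{-1+\alpha/2}$ in the norm $W$ combining ${}_0H^{\alpha/2}(0,T;L^2(\Omega))$ and $L^2(0,T;\dot H^1(\Omega))$ (via Slobodeckij double-integral computations on the graded mesh), and then upgrades to $M^{-1}$ through the energy estimate $\nm{y_h-\widetilde Y}_{L^2(0,T;L^2(\Omega))} \lesssim M^{-\alpha/2}\nm{(I-Q_\tau)y_h}_W$ of \cref{lem:yh-wtY}. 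That superconvergence factor $M^{-\alpha/2}$, which is what actually closes the first order, has no counterpart in your argument; note also that the paper runs this analysis on the spatially semidiscrete $y_h,p_h$ (with a discrete analogue of \cref{thm:regu-y-p}), not on $y,p$ directly.

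The spatial part is mis-aimed as well. Using the Ritz projection together with \cref{eq:regu-p1}--\cref{eq:regu-p2-h1} cannot yield $h^{\min\{1/\alpha+2r,2\}}$: the singular parts $y_2,p_2$ in \cref{thm:regu-y-p} carry only $\dot H^1(\Omega)$ spatial regularity pointwise in time, so an elliptic-projection estimate against those bounds caps out at $O(h)$. The fractional spatial rate comes from a source your proposal never invokes: the Mittag--Leffler estimates of \cref{lem:regu2}, which give $S\D_{0+}^\alpha y_0 \in L^2\big(0,T;\dot H^{\min\{1/\alpha+2r,2\}}(\Omega)\big)\cap {}_0H^{\alpha/2}\big(0,T;\dot H^{\min\{1/\alpha+2r-1,1\}}(\Omega)\big)$, combined with \cref{eq:Sg-real-complex} for $Su$; this is exactly the regularity fed into the energy-argument error estimate (cf.~\cite{Li2019SIAM}) in \cref{lem:y-yh}. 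To repair the proposal you must replace both central steps: the spatial estimate by the $L^2$-in-time fractional spatial regularity route, and the temporal estimate by the $H^{\alpha/2}$-seminorm bound for $(I-Q_\tau)$ on the graded mesh plus the $M^{-\alpha/2}$ energy factor.
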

\begin{remark}
  By following a similar routine in the proof of \cref{th:ape}, we can show that
  the estimate \eqref{eq:main-error-est} also holds for $0<r<1$ with $r\neq
  \frac{1}{\alpha}-1, \frac{1}{\alpha}-\frac12.$
\end{remark}
\begin{remark}
  Note that, under the condition that $ y_0 = 0 $ and $ y_d \in
  H^1(0,T;L^2(\Omega)) $, Jin et
  al.~\cite[Theorem~3.10]{jin2017pointwise-in-time} derived temporal accuracy $
  O(\tau^{1/2+\min\{1/2,\alpha-\epsilon\}}) $, where $ \epsilon > 0 $ can be
  arbitrarily small.
\end{remark}
\subsection{Proof of \texorpdfstring{\cref{th:ape}}{}} 
Throughout this subsection, $ u $, $ y $ and $p$ are the control, state and
co-state of \cref{eq:model}, respectively. By \cref{eq:optim_cond},
\cref{eq:optim_cond_full} and the standard technique in
\cite{Hinze2009Variational}, we obtain
\begin{align*}
  & \|u-U\|_{L^2(0,T;L^2(\Omega))}+ \nm{y-Y}_{L^2(0,T;L^2(\Omega))} \\
  \lesssim{}&
  \nm{y-\widetilde{Y}}_{L^2(0,T;L^2(\Omega))} +
  \nm{p-\widetilde{P}}_{L^2(0,T;L^2(\Omega))},
\end{align*}
where
\begin{align}
  \widetilde Y &:= S_{h\tau}(u+\D_{0+}^\alpha y_0), \\
  \widetilde P &:= S_{h\tau}^*(y-y_d).
\end{align}
Therefore, to conclude the proof of \cref{th:ape}, it remains to prove
\begin{align}
  \nm{y-y_h}_{L^2(0,T;L^2(\Omega))} +
  \nm{p-p_h}_{L^2(0,T;L^2(\Omega))}
  & \lesssim h^{\min\{1/\alpha+2r,2\}},
  \label{eq:y-yh-p-ph} \\
  \nm{y_h-\widetilde Y}_{L^2(0,T;L^2(\Omega))} +
  \nm{p_h-\widetilde P}_{L^2(0,T;L^2(\Omega))}
  & \lesssim M^{-1}, \label{eq:yh-wtY-ph-wtP}
\end{align}
where $ y_h $ is the solution of the equation
\begin{equation}
  \begin{cases}
    \D_{0+}^\alpha(y_h - Q_h y_0) - \Delta_h y_h = Q_hu, \\
    y_h(0) = Q_hy_0,
  \end{cases}
\end{equation}
and $ p_h $ is the solution of the equation
\begin{equation}
  \begin{cases}
    (\D_{T-}^\alpha - \Delta_h) p_h = Q_h(y-y_d), \\
    p_h(T) = 0.
  \end{cases}
\end{equation}
Here $ Q_h $ is the $ L^2(\Omega) $-orthogonal projection operator onto $
\mathcal V_h $ and $ \Delta_h: \mathcal V_h \to \mathcal V_h $ is the discrete
Laplace operator defined by
\begin{equation}
  (\Delta_h v_h, w_h)_\Omega =
  -(\nabla v_h, \nabla w_h)_\Omega
  \quad \forall v_h, w_h \in \mathcal V_h.
\end{equation}


Let us first prove estimate \cref{eq:y-yh-p-ph} in the following lemma.
\begin{lemma}
  \label{lem:y-yh}
  Under the condition of \cref{th:ape}, we have
  \begin{align}
    \nm{y-y_h}_{L^2(0,T;L^2(\Omega))}
    & \lesssim h^{\min\{1/\alpha+2r,2\}},
    \label{eq:y-yh} \\
    \nm{p-p_h}_{L^2(0,T;L^2(\Omega))}
    & \lesssim h^2. \label{eq:p-ph}
  \end{align}
\end{lemma}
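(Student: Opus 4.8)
The plan is to bound the state error \cref{eq:y-yh} and the co-state error \cref{eq:p-ph} separately, in each case splitting off the part of the solution that enjoys full spatial $ \dot H^2(\Omega) $-regularity (which yields the optimal rate $ h^2 $) from the part carrying the limited regularity of the nonsmooth datum $ y_0 \in \dot H^{2r}(\Omega) $. Throughout, $ R_h $ denotes the Ritz (elliptic) projection onto $ \mathcal V_h $, so that $ -\Delta_h R_h = Q_h(-\Delta) $ on $ \dot H^2(\Omega) $, and $ S_h, S_h^* $ denote the spatially semidiscrete solution operators, for which I shall use the discrete analogues of the smoothing estimates \cref{eq:Sg-real-complex,eq:S^*g-real-complex}. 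The co-state is the simpler case, so I treat it first.

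Since $ y - y_d \in L^2(0,T;L^2(\Omega)) $, estimate \cref{eq:S^*g-real-complex} with $ \beta = 0 $ gives $ p = S^*(y-y_d) \in {}^0H^\alpha(0,T;L^2(\Omega)) \cap L^2(0,T;\dot H^2(\Omega)) $ with norm $ \lesssim 1 $. Writing $ p - p_h = (p - R_h p) + (R_h p - p_h) =: \rho + \vartheta $, the Ritz estimate gives $ \nm{\rho}_{L^2(0,T;L^2(\Omega))} \lesssim h^2 \nm{p}_{L^2(0,T;\dot H^2(\Omega))} \lesssim h^2 $. Testing the spatial Galerkin orthogonality against $ v_h \in \mathcal V_h $ and using the Ritz orthogonality $ (\nabla\rho, \nabla v_h) = 0 $, I find that $ \vartheta $ solves the discrete equation $ (\D_{T-}^\alpha - \Delta_h)\vartheta = -\D_{T-}^\alpha\rho $, whence $ \vartheta = -\D_{T-}^\alpha S_h^*\rho $ because $ \D_{T-}^\alpha $ commutes with $ S_h^* $. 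The discrete analogue of \cref{eq:S^*g-real-complex} then yields $ \nm{S_h^*\rho}_{{}^0H^\alpha(0,T;L^2(\Omega))} \lesssim \nm{\rho}_{L^2(0,T;L^2(\Omega))} $, and since $ \D_{T-}^\alpha $ maps $ {}^0H^\alpha(0,T;L^2(\Omega)) $ boundedly into $ L^2(0,T;L^2(\Omega)) $, we get $ \nm{\vartheta}_{L^2(0,T;L^2(\Omega))} \lesssim \nm{\rho}_{L^2(0,T;L^2(\Omega))} \lesssim h^2 $. Combining the two bounds proves \cref{eq:p-ph}.

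For the state, I would absorb the initial condition into the source and use linearity to split $ y = Su + y^0 $ with $ y^0 := S\D_{0+}^\alpha y_0 $, and correspondingly $ y_h = S_hQ_hu + y^0_h $. Because $ u \in U_{\text{ad}} $ is bounded, $ \nm{u}_{L^2(0,T;L^2(\Omega))} \lesssim 1 $, so \cref{eq:Sg-real-complex} gives $ Su \in L^2(0,T;\dot H^2(\Omega)) $ with norm $ \lesssim 1 $; the exact argument of the previous paragraph, mutatis mutandis (now with the left-sided operator $ S_h $ and the ${}_0H^\alpha$ smoothing), yields $ \nm{Su - S_hQ_hu}_{L^2(0,T;L^2(\Omega))} \lesssim h^2 $. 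It therefore remains to bound $ \nm{y^0 - y^0_h}_{L^2(0,T;L^2(\Omega))} $, where $ y^0 $ is precisely the solution of the homogeneous fractional diffusion equation with the nonsmooth datum $ y_0 \in \dot H^{2r}(\Omega) $ (recall $ \D_{0+}^\alpha y_0 = t^{-\alpha}y_0/\Gamma(1-\alpha) $). Here the reduced rate appears.

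The crux, and the main obstacle, is a sharp pointwise-in-time semidiscrete estimate for this nonsmooth-data part: using resolvent bounds for the difference $ E(t) - E_h(t)Q_h $ between the continuous operator of \cref{lem:E} and its discrete counterpart $ E_h $, together with the smoothing $ \nm{y^0(t)}_{\dot H^2(\Omega)} \lesssim t^{-\alpha(1-r)}\nm{y_0}_{\dot H^{2r}(\Omega)} $, I expect a family of bounds $ \nm{(y^0 - y^0_h)(t)}_{L^2(\Omega)} \lesssim h^\mu\, t^{-\alpha(\mu/2 - r)}\nm{y_0}_{\dot H^{2r}(\Omega)} $ valid for $ 2r \leqslant \mu \leqslant 2 $, in which a higher spatial order $ \mu $ is paid for by a stronger temporal singularity. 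Integrating the square of this bound over $ (0,T) $ is finite precisely when $ 2\alpha(\mu/2 - r) < 1 $, i.e. $ \mu < 1/\alpha + 2r $; choosing $ \mu $ up to $ \min\{1/\alpha + 2r, 2\} $ (the borderline case $ \mu = 1/\alpha + 2r $ requiring a more careful, e.g. dyadic-in-time, treatment) then gives $ \nm{y^0 - y^0_h}_{L^2(0,T;L^2(\Omega))} \lesssim h^{\min\{1/\alpha+2r,2\}} $. Together with the $ h^2 $ bound for $ Su $ this establishes \cref{eq:y-yh}, and the chief difficulty is proving the time-weighted pointwise estimate and then balancing spatial order against temporal integrability so as to reach the sharp exponent $ 1/\alpha + 2r $.
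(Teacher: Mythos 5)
Your route is genuinely different from the paper's. The paper never touches pointwise-in-time error estimates: it establishes space--time Sobolev regularity of the exact solutions --- by \cref{eq:Sg-real-complex} and interpolation, $Su \in {}_0H^{\alpha/2}(0,T;\dot H^1(\Omega)) \cap L^2(0,T;\dot H^2(\Omega))$, and by \cref{lem:regu2}, $S\D_{0+}^\alpha y_0 \in {}_0H^{\alpha/2}(0,T;\dot H^{\min\{1/\alpha+2r-1,1\}}(\Omega)) \cap L^2(0,T;\dot H^{\min\{1/\alpha+2r,2\}}(\Omega))$ --- and then invokes a routine energy argument (cf.~\cite{Li2019SIAM}) that converts $L^2(0,T;\dot H^s(\Omega))$-regularity directly into an $O(h^s)$ bound; \cref{eq:p-ph} is obtained analogously from $p \in {}^0H^\alpha(0,T;L^2(\Omega)) \cap L^2(0,T;\dot H^2(\Omega))$. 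Because \cref{lem:regu2} delivers integrated regularity \emph{exactly} at the borderline index $s=\min\{1/\alpha+2r,2\}$ (e.g.\ $S\D_{0+}^\alpha v \in L^2(0,T;\dot H^{2r+1/\alpha}(\Omega))$ for $1/2<\alpha<1$), the sharp exponent comes for free, with no epsilon loss and no balancing of temporal weights. Your Thom\'ee-style treatment (Ritz projection plus semidiscrete maximal regularity for the inhomogeneous parts; nonsmooth-data smoothing estimates for the homogeneous part) buys independence from \cref{lem:regu2} and aligns with classical parabolic FEM theory, but it pushes all the difficulty into the time-weighted pointwise estimates. (A minor slip in the co-state step: since $\Delta_h R_h = Q_h\Delta$, the error equation reads $(\D_{T-}^\alpha-\Delta_h)\vartheta = -\D_{T-}^\alpha Q_h\rho$; the missing $Q_h$ is harmless.)

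The one genuine gap is exactly where you flagged it. First, the family $\nm{(y^0-y^0_h)(t)}_{L^2(\Omega)} \lesssim h^\mu\, t^{-\alpha(\mu/2-r)}\nm{y_0}_{\dot H^{2r}(\Omega)}$, $2r\leqslant\mu\leqslant 2$, is asserted (``I expect'') rather than proved; second, even granting it, your integration step yields only $h^\mu$ for $\mu$ \emph{strictly} below $1/\alpha+2r$, so when $1/\alpha+2r<2$ (equivalently $2\alpha(1-r)>1$, precisely the regime under the hypothesis $r<\min\{1/2,(1-\alpha)/\alpha\}$ where the reduced rate is active) the proof as written loses an epsilon against the stated exponent. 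Both defects are repairable. Writing $F_h(t)$ for the difference of the continuous and semidiscrete homogeneous solution operators in \cref{eq:SDalpha_v}, the standard bounds $\nm{F_h(t)}_{\mathcal L(L^2(\Omega))} \lesssim \min\{1, h^2 t^{-\alpha}\}$ and $\nm{F_h(t)}_{\mathcal L(\dot H^2(\Omega),L^2(\Omega))} \lesssim \min\{t^\alpha, h^2\}$ interpolate (first in each $\min$, then in the data index $r$) to give precisely $h^\mu t^{-\alpha(\mu/2-r)}$. For the endpoint, do not integrate a single fixed $\mu$: split at $t_*=h^{2/\alpha}$, use $\mu=2r$ (no time singularity) on $(0,t_*)$ and $\mu=2$ on $(t_*,T)$; then
\[
  h^{2r} t_*^{1/2} = h^{2r+1/\alpha},
  \qquad
  h^2 \Big( \int_{t_*}^T t^{-2\alpha(1-r)}\,\mathrm{d}t \Big)^{1/2}
  \lesssim h^2\, t_*^{1/2-\alpha(1-r)} = h^{1/\alpha+2r},
\]
which recovers the sharp rate. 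With these two additions your argument is complete; the co-state and $Su$ parts are sound as written, modulo the standard verification that the semidiscrete analogues of \cref{eq:Sg-real-complex,eq:S^*g-real-complex} hold uniformly in $h$.
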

\begin{proof} 
  By \cref{eq:Sg-real-complex} and the fact $ u \in U_\text{ad} $, we have
  \[
    Su \in {}_0H^\alpha(0,T;L^2(\Omega))
    \cap L^2(0,T;\dot H^2(\Omega)),
  \]
  so that by interpolation we obtain
  \[
    Su \in {}_0H^{\alpha/2}(0,T;\dot H^1(\Omega)).
  \]
  In addition, \cref{lem:regu2} implies
  \[
    S\D_{0+}^\alpha y_0 \in
    {}_0H^{\alpha/2}\big(
      0,T;\dot H^{\min\{1/\alpha+2r-1,1\}}(\Omega)
    \big) \cap L^2\big(
      0,T;\dot H^{\min\{1/\alpha+2r,2\}}(\Omega)
    \big).
  \]
  Consequently, we conclude from the fact $ y = S(u + \D_{0+}^\alpha y_0) $ that
  \[
    y \in {}_0H^{\alpha/2}\big(
      0,T;\dot H^{\min\{1/\alpha+2r-1,1\}}(\Omega)
    \big) \cap L^2\big(
      0,T;\dot H^{\min\{1/\alpha+2r,2\}}(\Omega)
    \big).
  \]
  A routine energy argument (cf.~\cite{Li2019SIAM}) then yields \cref{eq:y-yh}.
  Since \cref{eq:p-ph} can be derived analogously, this completes the proof.
\end{proof}

Then let us prove estimate \cref{eq:yh-wtY-ph-wtP}. Similar to the properties of
$ y $ and $ p $ presented in \cref{thm:regu-y-p}, under the condition of
\cref{th:ape}, we have the following properties: there exist decompositions
\begin{equation}
  \label{eq:yh-ph}
  y_h = y_{h,1} + y_{h,2}, \quad
  p_h = p_{h,1} + p_{h,2},
\end{equation}
where
\begin{align}
  & \nm{y_{h,1}}_{{}_0H^{1+\alpha}(0,T;L^2(\Omega))} +
  \nm{\Delta_h y_{h,1}}_{{}_0H^1(0,T;L^2(\Omega))}
  \lesssim 1, \label{eq:regu-yh1} \\
  & \nm{p_{h,1}}_{{}^0H^{1+\alpha}(0,T;L^2(\Omega))} +
  \nm{\Delta_h p_{h,1}}_{{}^0H^1(0,T;L^2(\Omega))}
  \lesssim 1, \label{eq:regu-ph1} \\
  & \nm{y_{h,2}'(t)}_{L^2(\Omega)}
  \lesssim t^{\alpha r - 1} + \omega_2(T-t)
  \quad \forall 0 < t < T, \\
  & \nm{y_{h,2}'(t)}_{\dot H^1(\Omega)} \lesssim
  t^{\alpha r - \alpha/2 - 1} + \omega_1(T-t)
  \quad \forall 0 < t < T, \\
  & \nm{p_{h,2}'(t)}_{L^2(\Omega)}
  \lesssim t^{\alpha r + \alpha - 1} + (T-t)^{\alpha-1}
  \quad \forall 0 < t < T, \\
  & \nm{p_{h,2}'(t)}_{\dot H^1(\Omega)} \lesssim
  t^{\alpha r + \alpha/2 - 1} + (T-t)^{\alpha/2-1}
  \quad \forall 0 < t < T.
\end{align}
Here $ \omega_1 $ and $ \omega_2 $ are defined by \cref{eq:omega1,eq:omega2},
respectively. For any $ v \in L^2(0,T;L^2(\Omega))$, space, define $ Q_\tau v
\in L^\infty(0,T;L^2(\Omega)) $ by
\[
  (Q_\tau v)|_{(t_{j-1},t_j)} :=
  \frac1{\tau_j} \int_{t_{j-1}}^{t_j} v(t) \, \mathrm{d}t,
  \quad  \forall  1  \leqslant j \leqslant 2M.
\]
By \cref{eq:regu-yh1,eq:regu-ph1} it is standard that
\begin{align}
  \nm{(I-Q_\tau)y_{h,1}}_W \lesssim M^{-1+\alpha/2}, \label{eq:I-Q-yh1} \\
  \nm{(I-Q_\tau)p_{h,1}}_W \lesssim M^{-1+\alpha/2}, \label{eq:I-Q-ph1}
\end{align}
where
\[
  \nm{\cdot}_W:= \nm{\cdot}_{{}_0H^{\alpha/2}(0,T;L^2(\Omega))}+
  \nm{\cdot}_{L^2(0,T;\dot{H}^1(\Omega))}.
\]

\begin{lemma}
  \label{lem:I-Qtau}
  Under the condition of \cref{th:ape}, we have
  \begin{align}
    \nm{(I-Q_\tau)y_h}_{W} & \lesssim M^{-1+\alpha/2},
    \label{eq:y-11} \\
    \nm{(I-Q_\tau)p_h}_{W} & \lesssim M^{-1+\alpha/2}.
    \label{eq:p-11}
  \end{align}
\end{lemma}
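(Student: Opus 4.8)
The plan is to use the decompositions $y_h=y_{h,1}+y_{h,2}$ and $p_h=p_{h,1}+p_{h,2}$ of \cref{eq:yh-ph} and to treat the two summands of $\nm{\cdot}_W$ separately. Since the smooth parts are already controlled in exactly this norm by \cref{eq:I-Q-yh1,eq:I-Q-ph1}, a triangle inequality reduces \cref{eq:y-11,eq:p-11} to bounding $\nm{(I-Q_\tau)y_{h,2}}_W$ and $\nm{(I-Q_\tau)p_{h,2}}_W$, that is, to estimating the projection error of the singular parts in both $L^2(0,T;\dot H^1(\Omega))$ and ${}_0H^{\alpha/2}(0,T;L^2(\Omega))$. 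I would carry out all four estimates by the same interval-by-interval scheme, feeding in the pointwise growth bounds for $y_{h,2}'$ and $p_{h,2}'$ recorded just after \cref{eq:regu-ph1}.

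For the component $L^2(0,T;\dot H^1(\Omega))$, on each interior subinterval $I_j=(t_{j-1},t_j)$ I would use the local estimate $\nm{(I-Q_\tau)v}_{L^2(I_j;X)}\lesssim\tau_j\nm{v'}_{L^2(I_j;X)}$ together with the derivative bounds, and then sum. The two endpoint intervals $(0,t_1)$ and $(t_{2M-1},T)$ must be handled separately, because for $r<1/2$ the $\dot H^1(\Omega)$-norm of $y_{h,2}$ blows up as $t\to0$; there I would instead use the best-constant bound $\nm{(I-Q_\tau)v}_{L^2(I;X)}\leqslant\nm{v-c}_{L^2(I;X)}$ and estimate $\nm{v(t)}_{\dot H^1(\Omega)}$ directly by integrating the derivative bound from the appropriate endpoint. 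A short computation gives a first-interval contribution $\lesssim t_1^{((2r-1)\alpha+1)/2}\approx M^{-\sigma_1((2r-1)\alpha+1)/2}$ for $y_{h,2}$ and a last-interval contribution $\lesssim M^{-\sigma_2(\alpha+1)/2}$ for $p_{h,2}$; the two inequalities in \cref{eq:sigma} are precisely what forces these, together with the interior sums, to be $\lesssim M^{-1+\alpha/2}$. The functions $\omega_1,\omega_2$ from \cref{eq:omega1,eq:omega2} enter only through the subordinate contributions near $t=T$ for the $y$-terms and near $t=0$ for the $p$-terms, which are no worse.

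The main obstacle is the component ${}_0H^{\alpha/2}(0,T;L^2(\Omega))$, since the Gagliardo seminorm of order $\alpha/2$ is nonlocal whereas $(I-Q_\tau)v$ is discontinuous across the nodes. The crucial simplification is that $\alpha/2<1/2$, so ${}_0H^{\alpha/2}=H^{\alpha/2}$ and piecewise-constant functions are admissible with no trace constraint at $t=0$; I would then bound the global seminorm by assembling the local approximation errors, the local fractional seminorms, and the nodal-jump contributions, each scaling like $\tau_j^{1-\alpha/2}\nm{v'}_{L^2(I_j;L^2(\Omega))}$. This loses a factor $\tau^{-\alpha/2}$ relative to the $L^2$-in-time estimate, which is exactly why the target rate is $M^{-1+\alpha/2}$ and not $M^{-1}$: in the bulk, where $\tau\approx M^{-1}$, it already produces $M^{-1+\alpha/2}$, while near the endpoints the grading keeps the singular contributions in check, and one checks that the resulting constraints on $\sigma_1,\sigma_2$ coincide with those from the $\dot H^1(\Omega)$ analysis. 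Summing the four estimates and invoking \cref{eq:I-Q-yh1,eq:I-Q-ph1} then yields \cref{eq:y-11,eq:p-11}.
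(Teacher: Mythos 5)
Your proposal follows essentially the same route as the paper: after the triangle-inequality reduction via \cref{eq:yh-ph,eq:I-Q-yh1,eq:I-Q-ph1}, the paper proves the $ {}_0H^{\alpha/2}(0,T;L^2(\Omega)) $ bound for the singular part by splitting $ (I-Q_\tau)p_{h,2} $ at $ T/2 $, extending by zero (legitimate since $ \alpha/2 < 1/2 $, as you note), and localizing the Gagliardo seminorm via \cite[Lemma 16.3]{Tartar2007} into precisely your local-seminorm and nodal-jump terms, which are then summed using the growth bound \cref{eq:regu-p2} and the grading conditions \cref{eq:sigma}, while the $ L^2(0,T;\dot H^1(\Omega)) $ bound is indeed dismissed as easier. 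One caution on your fractional-norm step: on the two endpoint intervals the uniform scaling $ \tau_j^{1-\alpha/2} \nm{v'}_{L^2(I_j;L^2(\Omega))} $ is unusable because the derivative is generically not square-integrable there (e.g.\ $ (T-t)^{\alpha-1} \notin L^2 $ near $ T $ when $ \alpha \leqslant 1/2 $, and $ t^{\alpha r+\alpha-1} \notin L^2 $ near $ 0 $ when $ \alpha(1+r) \leqslant 1/2 $), so there one must bound increments by the primitive, $ \nm{g_1(s)-g_1(t)}_{L^2(\Omega)} \lesssim s^{\alpha r+\alpha} - t^{\alpha r+\alpha} $, exactly the fractional analogue of the best-constant trick you already deploy for the $ \dot H^1(\Omega) $ component, and exactly what the paper does in its terms $ \mathbb I_1 $ and $ \mathbb I_3 $.
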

\begin{proof}
  We only prove \cref{eq:p-11}, since \cref{eq:y-11} can be derived analogously.
  By \cref{eq:yh-ph,eq:I-Q-ph1} it remains to prove
  \begin{align}
    \nm{(I-Q_\tau)p_{h,2}}_{{}_0H^{\alpha/2}(0,T;L^2(\Omega))}
    & \lesssim M^{-1+\alpha/2}, \label{eq:p2} \\
    \nm{(I-Q_\tau)p_{h,2}}_{L^2(0,T;\dot H^1(\Omega))}
    & \lesssim M^{-1+\alpha/2}. \label{eq:p2-h1}
  \end{align}
  We will give the proof of \cref{eq:p2}, the proof of \cref{eq:p2-h1} being
  easier. To this end, we proceed as follows. Let
  \begin{align*}
    g_1(t) := \begin{cases}
      (p_{h,2} - Q_\tau p_{h,2})(t) & \text{ if } t \in (0,T/2), \\
      0 & \text{ if } t \in (-\infty,0) \cup (T/2, \infty),
    \end{cases} \\
    g_2(t) := \begin{cases}
      (p_{h,2}-Q_\tau p_{h,2})(t) & \text{ if } t \in (T/2,T), \\
      0 & \text{ if } t \in (-\infty,T/2) \cup (T, \infty).
    \end{cases}
  \end{align*}
  By \cite[Lemma 16.3]{Tartar2007} we obtain
  \begin{equation}
    \label{eq:g1}
    \nm{g_1}^2_{H^{\alpha/2}(-\infty,\infty;L^2(\Omega))}
    \lesssim \mathbb{I}_1 + \mathbb{I}_2 + \mathbb{I}_3 + \mathbb{I}_4,
  \end{equation}
  where
  \begin{align*}
    \mathbb I_1 &:=\int_{0}^{t_1} \int_{0}^{t_1}
    \frac{\nm{g_1(s)-g_1(t)}_{L^2(\Omega)}^2}{\snm{s- t}^{1+\alpha}}
    \, \mathrm{d}s \, \mathrm{d}t, \\
    \mathbb I_2 &:= \sum_{j=2}^{M} \int_{t_{j-1}}^{t_j} \int_{t_{j-1}}^{t_j}
    \frac{\nm{g_1(s)- g_1(t)}_{L^2(\Omega)}^2}{\snm{s-t}^{1+\alpha}}
    \, \mathrm{d}s \, \mathrm{d}t, \\
    \mathbb I_3 &:= \int_{0}^{t_1} \nm{g_1(t)}_{L^2(\Omega)}^2
    \big(
      (t_1-t)^{-\alpha} + t^{-\alpha}
    \big) \, \mathrm{d}t, \\
    \mathbb I_4 &:= \sum_{j=2}^{M}
    \int_{t_{j-1}}^{t_j} \nm{g_1(t)}_{L^2(\Omega)}^2
    \big(
      (t_j-t)^{-\alpha} + (t-t_{j-1})^{-\alpha}
    \big) \, \mathrm{d}t.
  \end{align*}
  By \cref{eq:regu-p2}, an elementary calculation gives the following four
  estimates:
  \begin{align*} 
    \mathbb I_1 &=
    2 \int_{0}^{t_1} \int_{t}^{t_1}
    \frac{\nm{g_1(s)-g_1(t)}_{L^2(\Omega)}^2}{\snm{s-t}^{1+\alpha}}
    \, \mathrm{d}s \, \mathrm{d}t \phantom{++++++++++++++++}  \\
    & \lesssim
    \int_{0}^{t_1} \int_{t}^{t_1}
    \frac{\left(s^{\alpha r +\alpha }-t^{\alpha r +\alpha }\right)^2}{(s-t)^{1+\alpha}}
    \, \mathrm{d}s \, \mathrm{d}t \\
    &=
    \int_{0}^{t_1} \int_{t}^{t_1} \left(
      \left(\frac{s}{t}\right)^{\alpha r +\alpha } -1
    \right)^2 \left(\frac{s-t}{t} \right)^{-(1+\alpha)}
    t^{2\alpha r+\alpha-1} \, \mathrm{d}s \, \mathrm{d}t \\
    & \lesssim
    \int_{0}^{t_1} t^{2\alpha r+\alpha } \int_{1}^{t_1/t}
    \big(x^{\alpha r +\alpha}-1 \big)^2 \big(x-1\big)^{-(1+\alpha)}
    \, \mathrm{d}x \, \mathrm{d}t \\
    & \lesssim
    \int_{0}^{t_1} t^{2\alpha r+ \alpha }
    \left(1+ \left(\frac{t_1}{t} \right)^{2\alpha r +\alpha}\right)\,  \mathrm{d}t \\
    & \lesssim t_1^{2\alpha r + \alpha + 1},
  \end{align*}
  \begin{align*}
    \mathbb I_2 &=
    2\sum_{j=2}^M \int_{t_{j-1}}^{t_j}
    \int_t^{t_j} \frac{\nm{g_1(s)-g_1(t)}_{L^2(\Omega)}^2}{
      (s-t)^{1+\alpha}
    } \, \mathrm{d}s \, \mathrm{d}t \phantom{+++++++++++++} \\
    & \lesssim
    \sum_{j=2}^M \int_{t_{j-1}}^{t_j}
    \int_t^{t_j} \frac{
      (s-t)^2 \nm{p_{h,2}'}_{L^\infty(s,t;L^2(\Omega))}^2
    }{
      (s-t)^{1+\alpha}
    } \, \mathrm{d}s \, \mathrm{d}t \\
    & \lesssim
    \sum_{j=2}^M \big(
      1+t_{j-1}^{2(\alpha r+\alpha-1)}
    \big) \int_{t_{j-1}}^{t_j}
    \int_t^{t_j} (s-t)^{1-\alpha} \, \mathrm{d}s \, \mathrm{d}t \\
    & \lesssim
    \sum_{j=2}^M \big( 1+t_{j-1}^{2(\alpha r+\alpha-1)} \big)
    (t_j-t_{j-1})^{3-\alpha},
  \end{align*}
  \begin{align*}
    \mathbb I_3
    & \lesssim \nm{g_1}_{L^\infty(0,t_1;L^2(\Omega))}^2
    \int_0^{t_1} \big( (t_1-t)^{-\alpha} + t^{-\alpha }\big) \, \mathrm{d}t
    \phantom{++++++++++++} \\
    & \lesssim t_1^{2(\alpha r + \alpha)}
    \int_0^{t_1} \big( (t_1-t)^{-\alpha} + t^{-\alpha} \big) \, \mathrm{d}t \\
    & \lesssim t_1^{2\alpha r + \alpha + 1}
  \end{align*}
  and
  \begin{align*}
    \mathbb I_4 & \lesssim
    \sum_{j=2}^M (t_j-t_{j-1})^2
    \nm{p_{h,2}'}_{L^\infty(t_{j-1},t_j;L^2(\Omega))}^2
    \int_{t_{j-1}}^{t_j} \big(
      (t_j-t)^{-\alpha} + (t-t_{j-1})^{-\alpha}
    \big) \, \mathrm{d}t \\
    & \lesssim \sum_{j=2}^M \big(1 + t_{j-1}^{2(\alpha r + \alpha - 1)} \big)
    (t_j-t_{j-1})^{3-\alpha}.
  \end{align*}
  Since
  \begin{align*}
    \sum_{j=2}^M (t_j-t_{j-1})^{3-\alpha}
    &\lesssim M^{(\alpha-3)\sigma_1}
    \sum_{j=2}^M (j^{\sigma_1} - (j-1)^{\sigma_1})^{3-\alpha} \\
    & \lesssim
    M^{(\alpha-3)\sigma_1}
    \sum_{j=2}^M j^{(\sigma_1-1)(3-\alpha)} \\
    & \lesssim M^{\alpha-2}
  \end{align*}
  and
  \begin{align*} 
    & \sum_{j=2}^M t_{j-1}^{2(\alpha r + \alpha - 1)}
    (t_j-t_{j-1})^{3-\alpha} \\
    \lesssim{} &
    M^{-(2\alpha r + \alpha + 1)\sigma_1}
    \sum_{j=2}^M (j-1)^{2(\alpha r+\alpha-1)\sigma_1}
    \big(
      j^{\sigma_1} - (j-1)^{\sigma_1}
    \big)^{3-\alpha} \\
    \lesssim{} &
    M^{-(2\alpha r + \alpha + 1)\sigma_1}
    \sum_{j=2}^M j^{2(\alpha r+\alpha-1)\sigma_1}
    j^{(\sigma_1-1)(3-\alpha)} \\
    ={} &
    M^{-(2\alpha r + \alpha + 1)\sigma_1}
    \sum_{j=2}^M j^{(2\alpha r+\alpha+1)\sigma_1 + \alpha - 3} \\
    \lesssim{} &
    M^{\alpha-2} \quad \text{(by \cref{eq:sigma}),}
  \end{align*}
  combining the above estimates for $ \mathbb I_2 $ and $ \mathbb I_4 $ yields
  \[
    \mathbb I_2 + \mathbb I_4 \lesssim M^{\alpha-2}.
  \]
  In addition, combining the above estimate for $ \mathbb I_1 $ and $ \mathbb
  I_3 $ yields
  \[
    \mathbb I_1 + \mathbb I_3 \lesssim
    t_1^{2\alpha r + \alpha + 1}
    \lesssim M^{-(2\alpha r + \alpha + 1)\sigma_1}
    \lesssim M^{\alpha-2} \quad\text{(by \cref{eq:sigma}).}
  \]
  Consequently, we conclude from \cref{eq:g1} that
  \[
    \nm{g_1}_{H^{\alpha/2}(-\infty,\infty;L^2(\Omega))}
    \lesssim M^{\alpha-2}.
  \]
  A similar argument yields
  \[
    \nm{g_2}_{H^{\alpha/2}(-\infty,\infty;L^2(\Omega))}
    \lesssim M^{\alpha-2}.
  \]
  Therefore, \cref{eq:p2} follows from the estimate
  \begin{align*}
    \nm{g_1 + g_2}_{{}_0H^{\alpha/2}(0,T;L^2(\Omega))}
    & \lesssim \nm{g_1+g_2}_{H^{\alpha/2}(-\infty,\infty;L^2(\Omega))} \\
    & \lesssim \nm{g_1}_{H^{\alpha/2}(-\infty,\infty;L^2(\Omega))} +
    \nm{g_2}_{H^{\alpha/2}(-\infty,\infty;L^2(\Omega))}.
  \end{align*}
  This completes the proof.
\end{proof}

\begin{lemma}
  \label{lem:yh-wtY}
  Under the condition of \cref{th:ape}, we have
  \begin{align}
    \nm{y_h - \widetilde Y}_{L^2(0,T;L^2(\Omega))}
    \lesssim M^{-1}, \label{eq:yh-wtY} \\
    \nm{p_h - \widetilde P}_{L^2(0,T;L^2(\Omega))}
    \lesssim M^{-1}. \label{eq:ph-wtP}
  \end{align}
\end{lemma}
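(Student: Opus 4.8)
The plan is to identify $\widetilde Y$ as the discontinuous-Galerkin-in-time approximation of the spatially semidiscrete solution $y_h$, prove a Galerkin orthogonality relation, derive an energy estimate of the suboptimal order $M^{-1+\alpha/2}$, and then upgrade it to the optimal order $M^{-1}$ in $L^2(0,T;L^2(\Omega))$ by an Aubin--Nitsche duality argument. Throughout I set $B(v,w):=(\D_{0+}^{\alpha/2}v,\D_{T-}^{\alpha/2}w)_{\Omega\times(0,T)}+(\nabla v,\nabla w)_{\Omega\times(0,T)}$, so that by \cref{def:shtau} the fully discrete state $\widetilde Y=S_{h\tau}(u+\D_{0+}^\alpha y_0)$ satisfies $B(\widetilde Y,V)=(u+\D_{0+}^\alpha y_0,V)_{\Omega\times(0,T)}$ for all $V\in\mathcal W_{h\tau}$. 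First I would test the semidiscrete equation for $y_h$ against an arbitrary $V\in\mathcal W_{h\tau}$: writing $y_h=z_h+Q_hy_0$ with $z_h(0)=0$, a short computation combining \cref{lem:dual} (which turns $(\D_{0+}^\alpha z_h,V)$ and the time-constant part into the $\D^{\alpha/2}$--$\D^{\alpha/2}$ pairing), the identity $(\Delta_h y_h,V)=-(\nabla y_h,\nabla V)$, and the projection properties $(Q_hu,V)=(u,V)$ and $(\D_{0+}^\alpha Q_hy_0,V)=(\D_{0+}^\alpha y_0,V)$ shows $B(y_h,V)=(u+\D_{0+}^\alpha y_0,V)$. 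Hence the orthogonality $B(y_h-\widetilde Y,V)=0$ for all $V\in\mathcal W_{h\tau}$.

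For the energy estimate, write $e:=y_h-\widetilde Y=\rho+\theta$ with $\rho:=(I-Q_\tau)y_h$ and $\theta:=Q_\tau y_h-\widetilde Y\in\mathcal W_{h\tau}$. Because $\theta$ is piecewise constant in time while $\rho$ has vanishing time-average on every subinterval, $(\nabla\rho,\nabla\theta)_{\Omega\times(0,T)}=0$; testing the orthogonality with $V=\theta$ then gives $B(\theta,\theta)=-(\D_{0+}^{\alpha/2}\rho,\D_{T-}^{\alpha/2}\theta)_{\Omega\times(0,T)}$. The coercivity and continuity bounds of \cref{lem:dual}, together with $\nm{\theta}_W^2\lesssim B(\theta,\theta)$, yield $\nm{\theta}_W\lesssim\nm{\rho}_{{}_0H^{\alpha/2}(0,T;L^2(\Omega))}\leqslant\nm{\rho}_W$. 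Invoking \cref{lem:I-Qtau} for $\nm{\rho}_W=\nm{(I-Q_\tau)y_h}_W\lesssim M^{-1+\alpha/2}$ then produces $\nm{e}_W\leqslant\nm{\rho}_W+\nm{\theta}_W\lesssim M^{-1+\alpha/2}$, which is one half-power short of the target.

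To reach the optimal order I would run a duality argument. Given $\phi\in L^2(0,T;L^2(\Omega))$, let $w$ solve the spatially semidiscrete backward problem $(\D_{T-}^\alpha-\Delta_h)w=Q_h\phi$ with $w(T)=0$. Using \cref{lem:dual}, the definition of $\Delta_h$, and the fact that $e$ is $\mathcal V_h$-valued, one checks $(e,\phi)_{\Omega\times(0,T)}=B(e,w)$; subtracting $Q_\tau w\in\mathcal W_{h\tau}$ and using orthogonality gives $(e,\phi)=B(e,w-Q_\tau w)$, which is bounded by $C\nm{e}_W\nm{(I-Q_\tau)w}_W$ by the continuity of $B$. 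The discrete smoothing estimate (the $\Delta_h$-analogue of \cref{eq:S^*g-real-complex}) yields $w\in{}^0H^\alpha(0,T;L^2(\Omega))\cap L^2(0,T;\dot H^2(\Omega))$ with norm $\lesssim\nm{\phi}_{L^2(0,T;L^2(\Omega))}$, hence $w\in{}^0H^{\alpha/2}(0,T;\dot H^1(\Omega))$ by interpolation. Since the graded mesh satisfies $\tau\sim M^{-1}$, a local-to-global fractional approximation estimate for $Q_\tau$ (the diagonal blocks of the Gagliardo seminorm being dominated by the global seminorm) gives $\nm{(I-Q_\tau)w}_W\lesssim\tau^{\alpha/2}\nm{\phi}\lesssim M^{-\alpha/2}\nm{\phi}$. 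Combining, $(e,\phi)\lesssim M^{-1+\alpha/2}\,M^{-\alpha/2}\nm{\phi}=M^{-1}\nm{\phi}$, and taking the supremum over $\nm{\phi}_{L^2(0,T;L^2(\Omega))}=1$ proves \cref{eq:yh-wtY}. Estimate \cref{eq:ph-wtP} then follows by the symmetric argument, interchanging the roles of $S_{h\tau}$ and $S^*_{h\tau}$ and of the forward and backward operators.

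The main obstacle is the final step: establishing $\nm{(I-Q_\tau)w}_W\lesssim M^{-\alpha/2}\nm{\phi}$ for the adjoint $w$ generated by merely $L^2$-in-time data. The $L^2(0,T;\dot H^1(\Omega))$ part is routine, but the $H^{\alpha/2}(0,T;L^2(\Omega))$ part of the projection error requires a careful fractional-order approximation bound of the type used in \cref{lem:I-Qtau} (via \cite[Lemma 16.3]{Tartar2007}), and one must verify that the discrete smoothing estimate for $(\D_{T-}^\alpha-\Delta_h)^{-1}$ holds \emph{uniformly in $h$}, so that the constant in the duality estimate does not degenerate as $h\to0$.
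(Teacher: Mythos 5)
Your proposal is correct and follows essentially the same route as the paper: the paper compresses your Galerkin-orthogonality/energy/duality argument into the single phrase ``by an energy argument (cf.~\cite{Li2019SIAM})'', which yields precisely your intermediate bound $\nm{y_h-\widetilde Y}_{L^2(0,T;L^2(\Omega))}\lesssim M^{-\alpha/2}\nm{(I-Q_\tau)y_h}_W$, and then concludes via \cref{lem:I-Qtau} exactly as you do. The two technical points you flag at the end (the nonlocal $H^{\alpha/2}$-in-time projection-error bound for $Q_\tau$ and the $h$-uniform smoothing estimate for $(\D_{T-}^\alpha-\Delta_h)^{-1}$) are exactly what the cited reference supplies, so your argument contains no gap beyond what the paper itself delegates to that citation.
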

\begin{proof}
  By an energy argument (cf.~\cite{Li2019SIAM}), we obtain 
  \begin{align*}
    \nm{y_h - \widetilde Y}_{L^2(0,T;L^2(\Omega))}
    \lesssim M^{-\alpha/2}
    \nm{(I-Q_\tau)y_h}_W, \\
    \nm{p_h - \widetilde P}_{L^2(0,T;L^2(\Omega))}
    \lesssim M^{-\alpha/2}
    \nm{(I-Q_\tau)p_h}_W,
  \end{align*}
  so that \cref{eq:yh-wtY,eq:ph-wtP} follows from \cref{eq:y-11,eq:p-11},
  respectively. This completes the proof.
\end{proof}


\section{Numerical results}
\label{sec:numer}
This section provides three numerical experiments to verify the theoretical
results. We use uniform grids for the spatial
discretization and employ a fixed point method \cite{M2008} to solve the
discrete system. The convergent condition is that the difference of the discrete control (in $l^2$ norm) between two steps is less than 1e-13. We adopt the following setting:
\begin{align*}
  & \alpha =0.4 \ \text{or} \ 0.8; \quad r = 0 \ \text{or} \ 0.25;\\
  & \nu = 1; \quad T = 1; \quad \Omega = (0,1); \quad u_* = -0.1; \  u^* =0.1;\\
  & y_0(x) := x^{2r-0.49}(1-x) \quad \text{for all}
  \quad 0 \leqslant x \leqslant 1; \\
  & y_d(x,t) := x^{-0.49}(1-x)  \quad\text{for all}
  \quad \ 0 < x \leqslant 1 \text{ and } 0 \leqslant t \leqslant T.
\end{align*}
Let $ U^{m,n} $ be the numerical solution of \cref{eq:optim_cond_full} with the
mesh parameters $ M=2^m $, $ h= 1/n $ and 
\begin{equation}
\label{eq:sigma2}
\left\{
\begin{array}{ll} \sigma_1 = \max\left\{
1, \frac{2-\alpha}{(2r-1)\alpha+1}
\right\}, \\
\sigma_2= \max\left\{
1, \frac{2-\alpha}{\alpha+1}
\right\}.
\end{array}
\right.
\end{equation} Define the discrete state and co-state respectively as
  \begin{align*}
    Y^{m,n} :&=
    S_{h\tau}(U^{m,n}+ \D_{0+}^\alpha y_0), \\
    P^{m,n} :&=
    S^*_{h\tau}(S_{h\tau}(U^{m,n}+ \D_{0+}^\alpha y_0)-y_d).
  \end{align*}
  Throughout this section,
  $\nm{\cdot}_{L^2(0,T;L^2(\Omega))}$ is abbreviated to $\nm{\cdot}$ for
  convenience.


  \medskip\noindent\textbf{Experiment 1.} This experiment verifies the spatial
  accuracy.
  The reference solutions are $U^{14,512}$, $ Y^{14,512}$ and
  $P^{14,512}$. \cref{tab:space,tab:space2} demonstrate that the
  accuracies of state are close to $ \mathcal O(h^{\min\{2,1/\alpha +2r \}}) $, and this agrees well with
  \cref{th:ape}. In particular, it is observed that the convergence orders of the co-state and control are higher than the state in the case $\alpha = 0.8$.

  \begin{table}[H]
    \centering
    \caption{Convergence history with $r = 0$.}
    \label{tab:space}
    \footnotesize
    \setlength{\tabcolsep}{6pt}
    \begin{tabular}{cccccccc}
      \toprule &$n$&
      \multicolumn{2}{c}{$\nm{ Y^{14,512}- Y^{14,n}}$} &
      \multicolumn{2}{c}{$\nm{P^{14,512}- P^{14,n}}$} &
      \multicolumn{2}{c}{$\nm{U^{14,512}- U^{14,n}}$} \\
      \midrule
      \multirow{5}{*}{$\alpha=0.4$}
      &	$ 10 $ &  2.12e-3     & Order  & 1.60e-3 &Order  &  1.50e-3 &  Order \\
      &$ 20 $ & 5.94e-4  & 1.84 &   4.38e-4 & 1.87  &  4.16e-4    & 1.85   \\
      &$ 30 $ &2.78e-5   & 1.87  &  2.03e-4 &1.90 &   1.94e-4 &  1.89 \\
      &$ 40 $ & 1.61e-5  & 1.90    & 1.17e-4  &1.92  &  1.12e-4 & 1.91   \\
      &$ 50 $ & 1.05e-5   &1.99  &7.57e-5    &1.94& 7.26e-5  & 1.94    \\
      \midrule
      \multirow{5}{*}{$\alpha=0.8$}
      &$ 10 $ &  1.01e-2      & Order & 1.60e-3 &Order  & 1.49e-3 &  Order \\
      &$ 20 $ &4.23e-3  & 1.25 &  4.38e-4 & 1.87  &  4.13e-4    & 1.85   \\
      &$ 30 $ &2.53e-3   & 1.27   &  2.03e-4 &1.90&    1.93e-4 &  1.88 \\
      &$ 40$ & 1.74e-3  & 1.30 &   1.17e-4  &1.92  &   1.11e-4 & 1.90   \\
      &$ 50 $ &1.30e-3  &1.31  &7.57e-5    &1.94 &7.22e-5  & 1.94    \\
      \bottomrule
    \end{tabular}
  \end{table}

  \begin{table}[H]
    \centering
    \caption{Convergence history with $r = 0.25$.}
    \label{tab:space2}
    \footnotesize
    \setlength{\tabcolsep}{6pt}
    \begin{tabular}{cccccccc}
      \toprule &$n$&
      \multicolumn{2}{c}{$\nm{ Y^{14,512}- Y^{14,n}}$} &
      \multicolumn{2}{c}{$\nm{P^{14,512}- P^{14,n}}$} &
      \multicolumn{2}{c}{$\nm{U^{14,512}- U^{14,n}}$} \\
      \midrule
      \multirow{5}{*}{$\alpha=0.4$}
      &	$ 10 $ &  5.77e-4      & Order  & 1.56e-3 &Order  &  1.49e-3 &  Order \\
      &$ 20 $ & 1.45e-4  & 2.00 &   4.30e-4 & 1.86  &  4.14e-4    & 1.85   \\
      &$ 30 $ &6.43e-5   & 2.01  &  2.00e-4 &1.89 &   1.93e-4 &  1.89 \\
      &$ 40 $ & 3.58e-5  & 2.03    & 1.15e-4  &1.91  &  1.11e-4 & 1.90   \\
      &$ 50 $ & 2.28e-5   &2.03  &7.46e-5    &1.94& 7.23e-5  & 1.94     \\
      \midrule
      \multirow{5}{*}{$\alpha=0.8$}
      &$ 10 $ &  1.70e-3      & Order & 1.57e-3 &Order  & 1.48e-3 &  Order \\
      &$ 20 $ &5.07e-4  & 1.74  &  4.31e-3 & 1.86  &  4.12e-3    & 1.85   \\
      &$ 30 $ &2.46e-4   & 1.78   & 2.00e-3 &1.89 &   1.92e-4 &  1.88 \\
      &$ 40$ & 1.45e-4  & 1.83  &   1.15e-3  &1.91 &   1.11e-4 & 1.91   \\
      &$ 50 $ & 9.58e-5   &1.86  &7.47e-4    &1.94 &7.21e-5  & 1.93     \\
      \bottomrule
    \end{tabular}
  \end{table}

  \medskip\noindent\textbf{Experiment 2.} This experiment investigates the
  temporal accuracy with graded temporal grids. The reference solutions are $ U^{14,512}$, $Y^{14,512}$ and
  $ P^{14,512}$. \cref{tab:time3,tab:time4}
  illustrate that the temporal accuracy of the numerical control, state and
  co-state are close to $ \mathcal O(M^{-1}) $, which agrees well with
  \cref{th:ape}.

  \begin{table}[H]
    \centering
    \caption{Convergence history with $r=0$.}
    \label{tab:time3}
    \footnotesize
    \setlength{\tabcolsep}{6pt}
    \begin{tabular}{cccccccc}
      \toprule &$m$&
      \multicolumn{2}{c}{$\nm{ Y^{m,512}-Y^{14,512}}$} &
      \multicolumn{2}{c}{$\nm{ P^{m,512}- P^{14,512}}$} &
      \multicolumn{2}{c}{$\nm{U^{m,512}- U^{14,512}}$} \\
      \midrule
      \multirow{5}{*}{$\alpha=0.4$}
      &$ 8 $ &3.06e-4  & Order&  2.29e-4 &Order &  2.28e-4&  Order \\
      &$ 9 $ &1.54e-4 & 0.99 &   1.36e-4   & 0.75&   1.36e-4   & 0.75 \\
      &$ 10 $ &7.72e-5   & 1.00 & 7.87e-5&0.79 &   7.86e-5&  0.79 \\
      &$ 11 $ &3.85e-5  &1.00 & 4.40e-5 &0.84 &  4.39e-5& 0.84  \\
      &$ 12 $ &1.88e-5 &1.03&2.33e-5 &0.91&2.33e-5  & 0.91     \\
      \midrule
      \multirow{5}{*}{$\alpha=0.8$}
      &$ 8 $ &9.13e-4  & Order    & 2.02e-4 & Order  &  1.97e-4    & Order  \\
      &$ 9 $ &4.53e-4   & 1.01    & 1.01e-4&0.99   &  9.91e-5&  0.99 \\
      &$ 10 $ & 2.25e-4  & 1.01& 5.03e-5  &1.01   &   4.92e-5 & 1.01  \\
      &$ 11 $ & 1.11e-4   &1.02   &2.46e-5    &1.03&2.40e-5 & 1.03    \\
      &$12$ &  5.39e-5     & 1.04  & 1.17e-5 &1.07   &1.14e-5 & 1.07 \\
      \bottomrule
    \end{tabular}
  \end{table}

  \begin{table}[H]
    \centering
    \caption{Convergence history with $r=0.25$.}
    \label{tab:time4}
    \footnotesize
    \setlength{\tabcolsep}{6pt}
    \begin{tabular}{cccccccc}
      \toprule &$m$&
      \multicolumn{2}{c}{$\nm{ Y^{m,512}-Y^{14,512}}$} &
      \multicolumn{2}{c}{$\nm{ P^{m,512}- P^{14,512}}$} &
      \multicolumn{2}{c}{$\nm{U^{m,512}- U^{14,512}}$} \\
      \midrule
      \multirow{5}{*}{$\alpha=0.4$}
      &$ 8 $ &4.30e-4  & Order&  2.23e-4 &Order &  2.22e-4&  Order \\
      &$ 9 $ &2.33e-4 & 0.89 &   1.33e-4   & 0.75&   1.32e-4   & 0.75 \\
      &$ 10 $ &1.25e-4   & 0.90 & 7.67e-5&0.79 &   7.66e-5&  0.79 \\
      &$ 11 $ &6.57e-5  &0.92 & 4.29e-5 &0.84 &  4.28e-5& 0.84  \\
      &$ 12 $ &3.37e-5 &0.96&2.28e-5 &0.91&2.28e-5 & 0.91     \\
      \midrule
      \multirow{5}{*}{$\alpha=0.8$}
      &$ 8 $ &1.30e-3  & Order    & 1.95e-4 & Order  &  1.94e-4    & Order  \\
      &$ 9 $ &7.31e-4   & 0.84   & 9.82e-5&0.99   &  9.76e-5&  0.99 \\
      &$ 10 $ &4.05e-4  & 0.85& 4.88e-5  &1.01   &   4.85e-5 & 1.01  \\
      &$ 11 $ & 2.19e-4  &0.88   &2.38e-5    &1.03&2.38e-5 & 1.03    \\
      &$12$ &  1.14e-4     &0.95  & 1.13e-5 &1.08   &1.13e-5 & 1.08 \\
      \bottomrule
    \end{tabular}
  \end{table}

  \medskip\noindent\textbf{Experiment 3.} This experiment  investigates the temporal
  accuracy with uniform temporal grids. The reference solutions are $
  U^{14,512}$, $ Y^{14,512}$ and $P^{14,512}$, and we use $
  \widetilde U^{m,512}$, $\widetilde  Y^{m,512}$ and $\widetilde P^{m,512}$ to denote the corresponding numerical solutions of \cref{eq:optim_cond_full} with the
  mesh parameters $ M=2^m $, $ h=512 $ and  $\sigma_{1} = \sigma_{2}=1$. From \cref{tab:time1,tab:time2}, it is easy to see that the errors are generally larger than the cases with graded temporal grids.
  \begin{table}[H]
    \centering
    \caption{Convergence history with $r=0$.}
    \label{tab:time1}
    \footnotesize
    \setlength{\tabcolsep}{6pt}
    \begin{tabular}{cccccccc}
      \toprule &$m$&
      \multicolumn{2}{c}{$\nm{ \widetilde Y^{m,512}-Y^{14,512}}$} &
      \multicolumn{2}{c}{$\nm{ \widetilde P^{m,512}- P^{14,512}}$} &
      \multicolumn{2}{c}{$\nm{\widetilde U^{m,512}- U^{14,512}}$} \\
      \midrule
      \multirow{5}{*}{$\alpha=0.4$}
      &$ 8 $ &4.66e-3  & Order&  3.88e-4 &Order &  3.64e-4&  Order \\
      &$ 9 $ &3.35e-3 & 0.48 &   2.49e-4   & 0.64&   2.36e-4   & 0.63 \\
      &$ 10 $ &2.40e-3   & 0.48 & 1.56e-4&0.67&   1.49e-4&  0.66 \\
      &$ 11 $ &1.71e-3 &0.48 & 9.55e-5 &0.71&  9.19e-5& 0.70 \\
      &$ 12 $ &1.22e-3 &0.49 &5.72e-5 &0.74&5.53e-5  & 0.73     \\
      \midrule
      \multirow{5}{*}{$\alpha=0.8$}
      &$ 8 $ &1.11e-2  & Order    & 2.09e-4 & Order  &  1.97e-4    & Order  \\
      &$ 9 $ &7.86e-3   & 0.49   &1.05e-4&0.99   &  9.97e-5&  0.99 \\
      &$ 10 $ &5.57e-3  & 0.50& 5.21e-5  &1.01   &   4.95e-5 & 1.01  \\
      &$ 11 $ & 3.94e-3  &0.50   &2.55e-5    &1.03&2.42e-5 & 1.03    \\
      &$12$ &  2.78e-4     &0.50  & 1.22e-5 &1.06   &1.15e-5 & 1.07\\
      \bottomrule
    \end{tabular}
  \end{table}
  \begin{table}[H]
    \centering
    \caption{Convergence history with $r=0.25$.}
    \label{tab:time2}
    \footnotesize
    \setlength{\tabcolsep}{6pt}
    \begin{tabular}{cccccccc}
      \toprule &$m$&
      \multicolumn{2}{c}{$\nm{ \widetilde Y^{m,512}-Y^{14,512}}$} &
      \multicolumn{2}{c}{$\nm{ \widetilde P^{m,512}- P^{14,512}}$} &
      \multicolumn{2}{c}{$\nm{\widetilde U^{m,512}- U^{14,512}}$} \\
      \midrule
      \multirow{5}{*}{$\alpha=0.4$}
      &$ 8 $ &2.14e-3  & Order&  3.60e-4 &Order &  3.53e-4&  Order \\
      &$ 9 $ &1.45e-3 & 0.56 &   2.33e-4   & 0.63&   2.29e-4   & 0.62 \\
      &$ 10 $ &9.68e-4   & 0.58& 1.47e-4&0.67 &   1.45e-4&  0.66 \\
      &$ 11 $ &6.42e-4  &0.59 & 9.02e-5 &0.70 &  8.93e-5& 0.70  \\
      &$ 12 $ &4.24e-5 &0.60&5.42e-5 &0.73&5.37e-5 & 0.73     \\
      \midrule
      \multirow{5}{*}{$\alpha=0.8$}
      &$ 8 $ &2.45e-3  & Order    & 1.97e-4 & Order  &  1.94e-4    & Order  \\
      &$ 9 $ &1.51e-3   & 0.70  & 9.92e-5&0.99   &  9.78e-5&  0.99 \\
      &$ 10 $ &9.24e-4  & 0.71& 4.92e-5  &1.01   &   4.85e-5 & 1.01  \\
      &$ 11 $ & 5.62e-4  &0.72  &2.41e-5    &1.03&2.37e-5 & 1.03    \\
      &$12$ &  3.39e-4     &0.73  & 1.14e-5 &1.07   &1.13e-5 & 1.07 \\
      \bottomrule
    \end{tabular}
  \end{table}
  \appendix

	\section{Regularity of a fractional diffusion equation}\label{sec:regu-S}
	\subsection{Regularity in interpolation spaces}

	We first introduce the interpolation space theory (cf.~\cite[Chapter 2,
	pp.~54--55]{Lunardi2018}). Assume that $ (X,Y) $ is an interpolation couple of
	complex Banach spaces. For any $ 0 < \theta_1 < \theta_2 < 1 $, $ 0 < \theta < 1
	$ and $ 1 \leqslant q \leqslant \infty $,
	\begin{equation}
	\label{eq:complex_real}
	([X,Y]_{\theta_1}, [X,Y]_{\theta_2})_{\theta,q} =
	(X,Y)_{(1-\theta)\theta_1 + \theta\theta_2,q}
	\end{equation}
	with equivalent norms, where $[\cdot,\cdot]_{\theta}$ and $
	(\cdot,\cdot)_{\theta,q} $ denote the interpolation spaces defined by the
	complex method and the real method, respectively. For each $ w \in [X,Y]_\theta
	$ with $ 0 < \theta < 1 $,
	\begin{equation}
	\label{eq:Ktw}
	K(t,w) \leqslant 2t^\theta \nm{w}_{[X,Y]_\theta},
	\quad t > 0,
	\end{equation}
	where
	\[
	K(t,w) := \inf_{w=x+y,x\in X,y\in Y} \nm{x}_X + t \nm{y}_Y.
	\]
	If $ Y $ is continuously embedded into $ X $, then
	\begin{equation}
	\label{eq:65}
	[X,Y]_{\theta_2}  \text{ is continuously embedded into }
	[X,Y]_{\theta_1}
	\end{equation}
	for any $ 0 < \theta_1 < \theta_2 < 1 $.

	\begin{lemma}
		\label{lem:complex-real}
		If $ 0 < r < s < 1 $ and $ 1 \leqslant q < \infty $, then
		\begin{equation}
		\label{eq:complex-real}
		\nm{w}_{(X,Y)_{r,q}} \leqslant C \nm{w}_{[X,Y]_s}
		\end{equation}
		for all $ w \in [X,Y]_s $, where $ C $ is a positive constant independent of $
		w $.
	\end{lemma}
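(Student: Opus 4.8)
The plan is to estimate the real interpolation norm directly from its $K$-functional representation, splitting the defining integral at $t=1$ and bounding $K(t,w)$ differently on the two pieces. For $1 \leqslant q < \infty$ one has
\[
\nm{w}_{(X,Y)_{r,q}}^q = \int_0^\infty \big( t^{-r} K(t,w) \big)^q \, \frac{\mathrm{d}t}{t},
\]
so I would write $\int_0^\infty = \int_0^1 + \int_1^\infty$ and show that each part is controlled by $\nm{w}_{[X,Y]_s}^q$. The two regimes call for genuinely different inputs, and I expect the large-$t$ regime to be the only delicate point.

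On $(0,1)$ the growth estimate \cref{eq:Ktw} is exactly what is needed: it gives $K(t,w) \leqslant 2 t^s \nm{w}_{[X,Y]_s}$, whence $t^{-r} K(t,w) \leqslant 2 t^{s-r} \nm{w}_{[X,Y]_s}$. Since $r < s$, the exponent $(s-r)q$ is positive, so $\int_0^1 t^{(s-r)q-1}\,\mathrm{d}t$ converges and
\[
\int_0^1 \big( t^{-r} K(t,w) \big)^q \, \frac{\mathrm{d}t}{t} \leqslant C \, \nm{w}_{[X,Y]_s}^q .
\]

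On $(1,\infty)$ the bound \cref{eq:Ktw} is useless, because $t^{s-r}$ fails to be integrable against $t^{-1}$ at infinity; this is the heart of the argument. Here I would instead use the trivial decomposition $w = w + 0$, which gives the $t$-independent bound $K(t,w) \leqslant \nm{w}_X$. To turn this into a bound by $\nm{w}_{[X,Y]_s}$ I would invoke the continuous embedding $Y \hookrightarrow X$ in force in this subsection: it yields $X + Y = X$ and hence $[X,Y]_s \hookrightarrow X+Y = X$, so that $\nm{w}_X \leqslant C \nm{w}_{[X,Y]_s}$. Because $r > 0$, the integral $\int_1^\infty t^{-rq-1}\,\mathrm{d}t$ converges and
\[
\int_1^\infty \big( t^{-r} K(t,w) \big)^q \, \frac{\mathrm{d}t}{t} \leqslant C \, \nm{w}_{[X,Y]_s}^q .
\]

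Adding the two contributions and taking $q$-th roots gives \cref{eq:complex-real}. The main obstacle, as noted, is the behaviour at infinity: the power-law estimate \cref{eq:Ktw} does not decay there, so one must secure a uniform-in-$t$ bound on $K(t,w)$, which is precisely where the embedding $[X,Y]_s \hookrightarrow X$ (equivalently, the hypothesis $Y \hookrightarrow X$) enters. Everything else reduces to two elementary integrations of powers of $t$.
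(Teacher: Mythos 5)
Your proof is correct, and it shares the paper's skeleton: both split the norm integral at $t=1$ and handle $(0,1)$ identically, via \cref{eq:Ktw} with $\theta=s$ and the positivity of $(s-r)q$. The only genuine difference is the tail. The paper applies \cref{eq:Ktw} a second time, now with $\theta=r/2<r$, to get $t^{-r}K(t,w)\leqslant 2t^{-r/2}\nm{w}_{[X,Y]_{r/2}}$ on $(1,\infty)$, and then controls $\nm{w}_{[X,Y]_{r/2}}$ by $\nm{w}_{[X,Y]_s}$ through the monotonicity of the complex scale \cref{eq:65}; you instead take the trivial decomposition $w=w+0$, yielding the uniform bound $K(t,w)\leqslant\nm{w}_X$, and pay for it with the embedding $[X,Y]_s\hookrightarrow X+Y=X$. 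Note that both routes rest on the same hypothesis not recorded in the lemma's statement, namely the continuous embedding $Y\hookrightarrow X$: the paper needs it for \cref{eq:65}, and you need it to identify $X+Y$ with $X$; it holds for all couples used in this appendix, so neither proof has a gap on this point. In terms of trade-offs, your argument is marginally more elementary (a single use of \cref{eq:Ktw}, no auxiliary space $[X,Y]_{r/2}$, and the faster decay $t^{-r}$ rather than $t^{-r/2}$, though both are integrable against $\mathrm{d}t/t$), whereas the paper's version stays entirely within the complex interpolation scale and never invokes the standard embedding of $[X,Y]_s$ into the sum space. Either way the conclusion follows from the same two elementary power integrals, so your proposal is a valid, slightly different realization of the same strategy.
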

	\begin{proof}
		A straightforward calculation gives that
		\begin{align*}
		& \nm{w}_{(X,Y)_{r,q}}^q =
		\int_0^\infty \snm{t^{-r} K(t,w)}^q  \frac{\mathrm{d}t}{t} \\
		\leqslant{} & 2^q \int_0^1 \left( t^{s-r} \nm{w}_{[X,Y]_s} \right)^q
		\frac{\mathrm{d}t}t + 2^q
		\int_1^\infty \left( t^{-r/2} \nm{w}_{[X,Y]_{r/2}} \right)^q
		\frac{\mathrm{d}t}t \quad\text{(by \cref{eq:Ktw})} \\
		={} & \frac{2^q}{(s-r)q} \nm{w}_{[X,Y]_s}^q +
		\frac{2^{q+1}}{qr} \nm{w}_{[X,Y]_{r/2}}^q \\
		\leqslant{} & C \nm{w}_{[X,Y]_s}^q
		\quad\text{(by \cref{eq:65}),}
		\end{align*}
		where $ C $ is a positive constant independent of $ w $. This proves
		\cref{eq:complex-real} and hence this lemma.
	\end{proof}

	For $ m \in \mathbb N_{>0} $, $ 0 < \theta < 1 $ and $ 1 \leqslant q < \infty $,
	define
	\[
	{}_0H^{m\theta,q}(0,T;X) := \big[
	L^q(0,T;X), \, {}_0W^{m,q}(0,T;X)
	\big]_{\theta},
	\]
	where $ X $ is a Hilbert space. We  use $ {}_0H^{0,q}(0,T;X) $ to denote the
	space $ L^q(0,T;X) $. For $ 0 < \beta < \infty $ and $ 1 \leqslant q < \infty $,
	we have the following properties: if $ \beta \in \mathbb N_{>0} $ then
	\begin{equation}
	\label{eq:H-W-1}
	{}_0H^{\beta,q}(0,T;X) = {}_0W^{\beta, q}(0,T;X)
	\quad\text{ with equivalent norms;}
	\end{equation}
	if $ q = 2 $ then (cf.~\cite[Corollary 4.37]{Lunardi2018})
	\begin{equation}
	\label{eq:H-W-2}
	{}_0H^{\beta,q}(0,T;X) = {}_0W^{\beta,q}(0,T;X)
	\quad\text{ with equivalent norms.}
	\end{equation}

	By \cite[Theorem 4.5.15]{Pruss2016}, for any $ g \in
	{}_0H^{\beta,q}(0,T;L^2(\Omega)) $ with $ 0 \leqslant \beta < \infty $ and $ 1 <
	q < \infty $, there exists a unique $ Sg \in
	{}_0H^{\alpha+\beta,q}(0,T;L^2(\Omega)) \bigcap {}_0H^{\beta,q}(0,T;\dot
	H^2(\Omega)) $ such that
	\[
	(\D_{0+}^\alpha - \Delta) Sg = g;
	\]
	moreover,
	\begin{equation}
	\label{eq:Sg-complex}
	\nm{Sg}_{{}_0H^{\alpha+\beta,q}(0,T;L^2(\Omega))} +
	\nm{Sg}_{{}_0H^{\beta,q}(0,T;\dot H^2(\Omega))}
	\leqslant C_{\alpha,\beta,q}
	\nm{g}_{{}_0H^{\beta,q}(0,T;L^2(\Omega))}.
	\end{equation}

	\begin{lemma}
		\label{lem:regu-lp}
		Assume that $ 1 < q < \infty $. If $ g \in L^q(0,T;L^2(\Omega)) $, then
		\begin{equation}
		\label{eq:lbj-3}
		\nm{Sg}_{{}_0W^{\beta,q}(0,T;L^2(\Omega))}
		\leqslant C_{\alpha,\beta,q} \nm{g}_{L^q(0,T;L^2(\Omega))}
		\end{equation}
		for all $ 0 < \beta < \alpha $. If $ g \in {}_0W^{\beta,q}(0,T;L^2(\Omega)) $
		with $ 0 < \beta < \infty $, then
		\begin{equation}
		\label{eq:lbj-2}
		\nm{Sg}_{{}_0W^{\beta,q}(0,T;\dot H^2(\Omega))}
		\leqslant C_{\alpha,\beta,q} \nm{g}_{{}_0W^{\beta,q}(0,T;L^2(\Omega))}.
		\end{equation}
		If $ g \in {}_0W^{\beta,q}(0,T;L^2(\Omega)) $ with $ 0 < \beta < \infty $ and
		$ \alpha + \beta \not\in \mathbb N $, then
		\begin{equation}
		\label{eq:lbj-1}
		\nm{Sg}_{{}_0W^{\alpha+\beta,q}(0,T;L^2(\Omega))}
		\leqslant C_{\alpha,\beta,q} \nm{g}_{{}_0W^{\beta,q}(0,T;L^2(\Omega))}.
		\end{equation}
	\end{lemma}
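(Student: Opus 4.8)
The plan is to deduce all three estimates from the complex-scale bound \cref{eq:Sg-complex} by interpolating the single operator $S$, and then to convert the resulting norms into the real ($W$-)norms that the statement asks for. The tools for this conversion are \cref{eq:H-W-1,eq:H-W-2} (the complex-interpolation space ${}_0H^{\beta,q}$ agrees with the classical space ${}_0W^{\beta,q}$ at integer order, and at every order when $q=2$), the reiteration identity \cref{eq:complex_real} (a real interpolant of two complex interpolants is again a real interpolant), and the one-sided comparison \cref{lem:complex-real}. The guiding principle is that \cref{eq:Sg-complex} is sharp on the \emph{complex} scale while the targets are \emph{real} spaces, so the whole game is to switch scales without losing order; this is possible precisely because every interpolation below has its endpoints at integer order, where the two scales coincide.

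Estimate \cref{eq:lbj-3} is then immediate. Taking $\beta=0$ in \cref{eq:Sg-complex} gives $\nm{Sg}_{{}_0H^{\alpha,q}(0,T;L^2(\Omega))}\leqslant C\nm{g}_{L^q(0,T;L^2(\Omega))}$. Writing $X=L^q(0,T;L^2(\Omega))$ and $Y={}_0W^{1,q}(0,T;L^2(\Omega))$, the definitions give ${}_0H^{\alpha,q}=[X,Y]_\alpha$ and ${}_0W^{\beta,q}=(X,Y)_{\beta,q}$, so \cref{lem:complex-real} with $r=\beta<s=\alpha$ (both in $(0,1)$) yields $\nm{Sg}_{{}_0W^{\beta,q}}\leqslant C\nm{Sg}_{{}_0H^{\alpha,q}}\leqslant C\nm{g}_{L^q}$. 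This is the only place the strict inequality $\beta<\alpha$ enters, and it is exactly the hypothesis.

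For \cref{eq:lbj-2} I would interpolate $S$ between consecutive integer orders. By \cref{eq:Sg-complex,eq:H-W-1}, for each $n\in\mathbb N$ the map $S$ is bounded from ${}_0W^{n,q}(0,T;L^2(\Omega))$ into ${}_0W^{n,q}(0,T;\dot H^2(\Omega))$. Fixing $n=\lfloor\beta\rfloor$ and applying the real ($K$-)method with parameter $\theta=\beta-n$ to the endpoint bounds at orders $n$ and $n+1$ produces $S:{}_0W^{\beta,q}(0,T;L^2(\Omega))\to{}_0W^{\beta,q}(0,T;\dot H^2(\Omega))$, which is \cref{eq:lbj-2}; here the source space is by definition the real interpolant, both endpoints are integral, and no order is lost, so no value of $\beta$ need be excluded.

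Estimate \cref{eq:lbj-1}, which carries the genuine $\alpha$-shift in time, is the heart of the matter. For non-integer $\beta$ I would again interpolate between integer source orders: \cref{eq:Sg-complex,eq:H-W-1} give $S:{}_0W^{n,q}(0,T;L^2(\Omega))\to{}_0H^{n+\alpha,q}(0,T;L^2(\Omega))$ and $S:{}_0W^{n+1,q}(0,T;L^2(\Omega))\to{}_0H^{n+1+\alpha,q}(0,T;L^2(\Omega))$ with $n=\lfloor\beta\rfloor$; the real method at $\theta=\beta-n\in(0,1)$ makes the source ${}_0W^{\beta,q}$ by definition, while \cref{eq:complex_real} (after writing both target endpoints as complex interpolants of a common couple $(L^q,{}_0W^{N,q})$) identifies the target $\big({}_0H^{n+\alpha,q},{}_0H^{n+1+\alpha,q}\big)_{\theta,q}$ with ${}_0W^{\alpha+\beta,q}(0,T;L^2(\Omega))$. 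This identification is legitimate exactly when $\alpha+\beta\notin\mathbb N$, for otherwise the interpolant lands at integer order and fails to coincide with the classical space — precisely the excluded set in the statement. The step I expect to be the main obstacle is the case of \emph{integral} $\beta$ (still with $\alpha+\beta\notin\mathbb N$): the construction above would then force an integer-order \emph{source}, where the real (Besov) and complex (Bessel-potential) scales differ when $q\neq2$. Here one must instead keep the source fixed at integer order through \cref{eq:H-W-1} and reconcile the two scales by a careful application of \cref{eq:complex_real} (and, when $q=2$, directly via \cref{eq:H-W-2}); this is the only point at which the argument genuinely depends on how complex and real interpolation compare away from Hilbert couples, and it is where the condition $\alpha+\beta\notin\mathbb N$ must be exploited most delicately.
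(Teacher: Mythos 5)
Your proposal follows essentially the same route as the paper: \cref{eq:lbj-3} from \cref{eq:Sg-complex} with $\beta=0$ combined with \cref{lem:complex-real}; \cref{eq:lbj-2} by real interpolation of \cref{eq:Sg-complex} between consecutive integer orders via \cref{eq:H-W-1} (the paper invokes the operator-interpolation theorem from Lunardi); and \cref{eq:lbj-1} by the real method with integer source endpoints, identifying the target through \cref{eq:complex_real} and reiteration, which is exactly where $\alpha+\beta\notin\mathbb N$ enters in the paper as well. The integer-$\beta$ obstacle you flag is likewise glossed in the paper (its explicit proof treats $\beta\in(0,1]\setminus\{1-\alpha\}$ and declares the remaining cases analogous), so your account matches the paper's argument in both substance and level of detail.
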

	\begin{proof}
		Estimate \cref{eq:lbj-3} follows from \cref{lem:complex-real,eq:Sg-complex},
		and estimate \cref{eq:lbj-2} follows from \cref{eq:H-W-1},
		\cref{eq:Sg-complex} and \cite[Theorem 1.6]{Lunardi2018}. Let us proceed to
		prove \cref{eq:lbj-1} for the case $ \beta \in (0,1] \setminus \{1-\alpha\} $.
		By \cref{eq:Sg-complex} it holds that
		\[
		\nm{Sw}_{{}_0H^{\alpha,q}(0,T;L^2(\Omega))}
		\leqslant C_{\alpha,q} \nm{w}_{L^q(0,T;L^2(\Omega))}
		\]
		for all $ w \in L^q(0,T;L^2(\Omega)) $ and that
		\[
		\nm{Sw}_{{}_0H^{\alpha+1,q}(0,T;L^2(\Omega))}
		\leqslant C_{\alpha,q} \nm{w}_{{}_0W^{1,q}(0,T;L^2(\Omega))}
		\]
		for all $ w \in {}_0W^{1,q}(0,T;L^2(\Omega)) $. Hence, applying \cref{eq:complex_real} and the real
		interpolation of type $ (\beta,q) $ yields that
		\begin{align*}
		\nm{Sw}_{{}_0W^{\alpha+\beta,q}(0,T;L^2(\Omega))}
		\leqslant C_{\alpha,\beta,q}
		\nm{w}_{{}_0W^{\beta,q}(0,T;L^2(\Omega))}
		\end{align*}
		for all $ w \in {}_0W^{\beta,q}(0,T;L^2(\Omega)) $. For $ 1 < \beta < \infty $
		and $ \alpha+\beta \not\in \mathbb N $, \cref{eq:lbj-1} can be proved
		analogously. This completes the proof.
	\end{proof}
	\begin{remark}
		In the above proof of \cref{eq:lbj-1}, we have used the following well-known
		result (cf.~\cite[Proposition 3.8]{Lunardi2018}): if $ j, k, m, n \in \mathbb
		N $ and $ 0 < r,s < 1 $ satisfy that $ j < k $, $ m < n $ and $ (1-r)j+r k =
		(1-s)m+sn \not\in \mathbb N $, then
		\begin{align*}
		& \big(
		{}_0W^{j,q}(0,T;L^2(\Omega)),\,
		{}_0W^{k,q}(0,T;L^2(\Omega))
		\big)_{r,q} \\
		={} & \big(
		{}_0W^{m,q}(0,T;L^2(\Omega)),\,
		{}_0W^{n,q}(0,T;L^2(\Omega))
		\big)_{s,q},
		\end{align*}
		with equivalent norms, where $ 1 \leqslant q < \infty $.
	\end{remark}
	\subsection{ Regularity from the Mittag-Leffler function}
	For any $ \beta, \gamma > 0 $, define the Mittag-Leffler function $
	E_{\beta,\gamma} $ by that
	\begin{equation}
	\label{eq:ml}
	E_{\beta,\gamma}(z) :=
	\sum_{k=0}^\infty \frac{z^k}{\Gamma(k\beta + \gamma)},
	\quad z \in \mathbb C.
	\end{equation}
	This function has a useful growth estimate \cite{Podlubny1998}:
	\begin{equation}
	\snm{E_{\beta,\gamma}(-t)} \leqslant
	\frac{C_{\beta,\gamma}}{1+t}, \quad t > 0.
	\end{equation}
	Moreover, the Mittag-Leffler function admits the asymptotic expansion (cf. \cite[pp. 207]{function}):
	\[
	E_{\beta,1}(-t) = \sum _{k=1}^N \frac{(-1)^{k+1}t^{-k}}{\Gamma(1-k\beta)} + O(t^{-N-1}), \quad \text{as} \ t \rightarrow \infty.
	\]
	A straightforward calculation gives that \cite{Sakamoto2011}, for any $ v \in
	L^2(\Omega) $,
	\begin{align}
	& (Sv)(t) = t^\alpha \sum_{k=0}^\infty
	E_{\alpha,1+\alpha}(-\lambda_k t^\alpha)
	\dual{v,\phi_k}_\Omega \, \phi_k,
	\label{eq:Sv} \\
	& (S\D_{0+}^\alpha v)(t) =
	\sum_{k=0}^\infty E_{\alpha,1}(-\lambda_k t^\alpha)
	\dual{v,\phi_k}_\Omega \, \phi_k,
	\label{eq:SDalpha_v}
	\end{align}
	for each $ 0 \leqslant t \leqslant T $. Here
	\begin{equation}\label{basis}
	\{\phi_k: k \in \mathbb N \} \subset H_0^1(\Omega) \cap H^2(\Omega)
	\end{equation}
	is an orthonormal basis of $ L^2(\Omega) $ such that   for all $ k \in \mathbb N $,
	\begin{equation}\label{eigen}
	-\Delta \phi_k = \lambda_k \phi_k \quad \text{ in  }   \Omega,
	\end{equation}
	where $ \{ \lambda_k: \ k\in \mathbb N \} \subset \mathbb R_{>0} $ is a
	non-decreasing sequence.

	By the  above properties of Mittag-Leffler function, \cite[Lemma 3.4]{Luo2019} and some techniques in \cite{mclean2010regularity},  a few straightforward
	calculations yield the following two lemmas. 

	\begin{lemma}
		\label{lem:grow}
Assume that $ 0 < t < T $ and $ v \in L^2(\Omega) $ then
			\begin{align}
			t^{-\alpha} \nm{(Sv)(t)}_{L^2(\Omega)} +
			t^{1-\alpha} \nm{(Sv)'(t)}_{L^2(\Omega)}
			& \leqslant C_\alpha \nm{v}_{L^2(\Omega)},
			\label{eq:(Sv)'-growth} \\
			(T-t)^{-\alpha} \nm{(S^*v)(t)}_{L^2(\Omega)} +
			(T-t)^{1-\alpha} \nm{(S^*v)'(t)}_{L^2(\Omega)}
			&  \leqslant C_\alpha \nm{v}_{L^2(\Omega)}.
			\label{eq:(S^*v)'-growth}
			\end{align}
Moreover, if $ v \in \dot H^{2r }(\Omega) $ with $0<r<1$ then
		\begin{small}
			\begin{align}
			\nm{(S\D_{0+}^\alpha v)(t)}_{L^2(\Omega)}
			\leqslant C_\alpha t^{\alpha r} \nm{v}_{\dot H^{2r}(\Omega)}.
			\label{eq:SDalpha_v_l2} \\
			\label{eq:SDalpha_v_h1}
			\nm{(S\D_{0+}^\alpha v)'(t)}_{L^2(\Omega)} +
			t^{\alpha/2} \nm{(S\D_{0+}^\alpha v)'}_{\dot H^1(\Omega)}
			\leqslant C_\alpha t^{\alpha r-1} \nm{v}_{\dot H^{2r}(\Omega)}.
			\end{align}
		\end{small}
	\end{lemma}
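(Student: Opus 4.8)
The plan is to prove every inequality by expanding $v$ in the eigenbasis $\{\phi_k\}$ of $-\Delta$ and invoking the spectral representations \cref{eq:Sv,eq:SDalpha_v} together with the scalar growth bound $\snm{E_{\beta,\gamma}(-x)}\leqslant C_{\beta,\gamma}/(1+x)$ recorded before \cref{eq:Sv}. Since $\{\phi_k\}$ is orthonormal in $L^2(\Omega)$ and $-\Delta\phi_k=\lambda_k\phi_k$, Parseval's identity turns each $L^2(\Omega)$- or $\dot H^s(\Omega)$-norm of the relevant series into a weighted sum $\sum_k w_k\snm{\dual{v,\phi_k}_\Omega}^2$, where $w_k$ is a power of $\lambda_k$ times the square of a Mittag-Leffler multiplier evaluated at $x=\lambda_k t^\alpha$. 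In this way every estimate reduces to a single uniform-in-$x$ scalar inequality of the form $x^{a}\snm{E_{\alpha,\gamma}(-x)}\leqslant C_\alpha$ for an appropriate exponent $a$ and second index $\gamma$, which I would verify by splitting into the regimes $0<x\leqslant 1$ and $x\geqslant 1$.

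For the first pair \cref{eq:(Sv)'-growth,eq:(S^*v)'-growth}, I would first record from \cref{eq:Sv} the term-by-term identity
\[
(Sv)'(t)=t^{\alpha-1}\sum_{k}E_{\alpha,\alpha}(-\lambda_k t^\alpha)\dual{v,\phi_k}_\Omega\,\phi_k,
\]
which stems from $\tfrac{\mathrm d}{\mathrm dt}\big[t^\alpha E_{\alpha,1+\alpha}(-\lambda t^\alpha)\big]=t^{\alpha-1}E_{\alpha,\alpha}(-\lambda t^\alpha)$; the term-by-term differentiation is legitimate because $E$ is analytic (\cref{lem:E}) and the series converges uniformly on compact subsets of $(0,T)$, exactly as in \cref{lem:SgS*g}. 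Bounding $\snm{E_{\alpha,1+\alpha}(-x)}$ and $\snm{E_{\alpha,\alpha}(-x)}$ by $C_\alpha$ then yields the $t^{-\alpha}$ and $t^{1-\alpha}$ scalings for $Sv$ and $(Sv)'$ against $\nm{v}_{L^2(\Omega)}$. The estimate \cref{eq:(S^*v)'-growth} for $S^*$ follows with no extra work from the reflection $(S^*v)(t)=(Sv)(T-t)$, which is immediate on comparing \cref{eq:Sg,eq:S*g} for the time-constant datum $v$; this carries every power of $t$ into the corresponding power of $T-t$.

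The substantive estimates are those for $S\D_{0+}^\alpha v$. Differentiating \cref{eq:SDalpha_v} through $\tfrac{\mathrm d}{\mathrm dt}E_{\alpha,1}(-\lambda t^\alpha)=-\lambda t^{\alpha-1}E_{\alpha,\alpha}(-\lambda t^\alpha)$ gives
\[
(S\D_{0+}^\alpha v)'(t)=-t^{\alpha-1}\sum_{k}\lambda_k E_{\alpha,\alpha}(-\lambda_k t^\alpha)\dual{v,\phi_k}_\Omega\,\phi_k.
\]
Because $\nm{v}_{\dot H^{2r}(\Omega)}^2=\sum_k\lambda_k^{2r}\snm{\dual{v,\phi_k}_\Omega}^2$, the $L^2(\Omega)$ part of \cref{eq:SDalpha_v_h1} reduces to the scalar bound $x^{1-r}\snm{E_{\alpha,\alpha}(-x)}\leqslant C_\alpha$, which already follows from the decay $\snm{E_{\alpha,\alpha}(-x)}\leqslant C_\alpha/(1+x)$, while the function-value bound \cref{eq:SDalpha_v_l2} reduces to controlling the multiplier $\lambda_k^{-r}E_{\alpha,1}(-\lambda_k t^\alpha)$ by the same growth estimate. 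The $\dot H^1(\Omega)$ part of \cref{eq:SDalpha_v_h1}, however, reduces to $x^{3/2-r}\snm{E_{\alpha,\alpha}(-x)}\leqslant C_\alpha$, and here lies the main obstacle: for $r<1/2$ the crude decay $1/(1+x)$ is not enough for large $x$. I would overcome this with the sharper decay $\snm{E_{\alpha,\alpha}(-x)}\leqslant C_\alpha/(1+x)^2$, which comes from the asymptotic expansion of the Mittag-Leffler function, since the leading $O(x^{-1})$ term of $E_{\alpha,\alpha}(-x)$ carries the coefficient $1/\Gamma(\alpha-\alpha)=1/\Gamma(0)=0$ and therefore drops out, leaving $O(x^{-2})$ decay. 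With this refinement $x^{3/2-r}(1+x)^{-2}$ is bounded on $(0,\infty)$ for every $0<r<1$, which closes the $\dot H^1$ estimate and, together with the preceding reductions, completes the lemma.
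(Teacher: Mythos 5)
Your overall route---eigenfunction expansion, Parseval's identity, and uniform scalar bounds on the Mittag-Leffler multipliers---is exactly the route the paper intends (its ``proof'' is a pointer to the growth and asymptotic properties of $E_{\beta,\gamma}$ recorded before \cref{eq:Sv}, plus the techniques of McLean and of Luo et al.), and most of your reductions are carried out correctly: the differentiation identities for $t^\alpha E_{\alpha,1+\alpha}(-\lambda t^\alpha)$ and $E_{\alpha,1}(-\lambda t^\alpha)$, the reflection $(S^*v)(t)=(Sv)(T-t)$ for time-independent data, and in particular the key observation that $E_{\alpha,\alpha}(-x)=O(x^{-2})$ because the $x^{-1}$ term of the asymptotic expansion carries the coefficient $1/\Gamma(\alpha-\alpha)=1/\Gamma(0)=0$. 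That last point is precisely the ingredient needed for the $\dot H^1$ half of \cref{eq:SDalpha_v_h1} when $r<1/2$, and you identified it correctly.

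There is, however, a genuine gap in your one-sentence treatment of \cref{eq:SDalpha_v_l2}. Your reduction requires the scalar bound $\snm{E_{\alpha,1}(-x)}\leqslant C_\alpha x^{r}$ uniformly in $x>0$, and this fails as $x\to 0^+$, since $E_{\alpha,1}(0)=1$ while $x^r\to 0$; the decay estimate $\snm{E_{\alpha,1}(-x)}\leqslant C_\alpha/(1+x)$ gives no help in that regime. Nor can the step be repaired by a sharper multiplier bound, because \cref{eq:SDalpha_v_l2} as displayed is itself false near $t=0$: by \cref{eq:SDalpha_v} one has $(S\D_{0+}^\alpha v)(t)\to v$ in $L^2(\Omega)$ as $t\to 0^+$ (take $v=\phi_1$: the left-hand side tends to $1$ while the right-hand side tends to $0$). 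What your method does prove---and what is evidently intended---is the difference estimate
\[
\nm{(S\D_{0+}^\alpha v)(t)-v}_{L^2(\Omega)}\leqslant C_\alpha t^{\alpha r}\nm{v}_{\dot H^{2r}(\Omega)},
\]
which follows from $\snm{E_{\alpha,1}(-x)-1}\leqslant C_\alpha\min\{1,x\}\leqslant C_\alpha x^{r}$ together with Parseval's identity. You should flag this discrepancy rather than assert the displayed inequality: note that \cref{eq:SDalpha_v_l2} is not invoked anywhere in the subsequent analysis (the proofs of \cref{lem:u-to-u}, \cref{lem:u-to-u-p} and \cref{thm:regu-y-p} use only \cref{eq:SDalpha_v_h1}), so the correction is harmless to the rest of the paper. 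Apart from this point, your argument is sound.
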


  \begin{lemma} \label{lem:regu2}
    Assume that $ v \in \dot H^{2r}(\Omega) $ with $0<r<1$. If $ 0<\alpha<1/3$, then
    \begin{align*}
      \nm{S \D_{0+}^{\alpha} v}_{
        {}_0H^{\alpha/2}(0,T;\dot H^{2+2r}(\Omega))
      } \leqslant C_{\alpha,r,\Omega,T}
      \nm{v}_{\dot H^{2r}(\Omega)}.
    \end{align*}
    If $\alpha=1/3$, then for any $ 0 < \epsilon < 1/2 $,
    \begin{align*}
      \nm{S \D_{0+}^{\alpha} v}_{
        {}_0H^{\alpha/2}(0,T;\dot H^{2+2r-\epsilon}(\Omega))
      } \leqslant C_{\alpha,r,\Omega,T}
      \epsilon^{-1/2} \nm{v}_{\dot H^{2r}(\Omega)}.
    \end{align*}
    If $ 1/3<\alpha<1$, then
    \begin{align*}
      \nm{S \D_{0+}^{\alpha} v}_{
        {}_0H^{\alpha/2}(0,T;\dot H^{1/\alpha+2r-1}(\Omega))
      } \leqslant C_{\alpha,r,\Omega,T}
      \nm{v}_{\dot H^{2r}(\Omega)}.
    \end{align*}
    If		$ 0<\alpha<1/2$, then
    \begin{align*}
      \nm{S \D_{0+}^{\alpha} v}_{L^2(0,T;\dot H^{2+2r}(\Omega))}
      \leqslant C_{\alpha,r,\Omega,T} \nm{v}_{\dot H^{2r}(\Omega)}.
    \end{align*}
    If 	$\alpha=1/2$, then for any $ 0 < \epsilon < 1/2$,
    \begin{align*}
      \nm{S \D_{0+}^{\alpha} v}_{
        L^2(0,T;\dot H^{2+2r-\epsilon}(\Omega))
      } \leqslant C_{\alpha,r,\Omega,T}
      \epsilon^{-1/2} \nm{v}_{\dot H^{2r} (\Omega)}.
    \end{align*}
    If $ 1/2 < \alpha < 1 $, then
    \begin{align*}
      \nm{S \D_{0+}^{\alpha} v}_{
        L^2(0,T;\dot H^{2r+1/\alpha}(\Omega))
      } \leqslant C_{\alpha,r,\Omega,T}
      \nm{v}_{\dot H^{2r}(\Omega)}.
    \end{align*}
  \end{lemma}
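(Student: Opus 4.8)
The plan is to diagonalize every norm over the eigenbasis $\{\phi_k\}$ and reduce to uniform-in-$k$ scalar estimates for the Mittag-Leffler profile. Writing $v=\sum_k v_k\phi_k$ with $v_k=\dual{v,\phi_k}_\Omega$, \cref{eq:SDalpha_v} gives $(S\D_{0+}^\alpha v)(t)=\sum_k v_k E_{\alpha,1}(-\lambda_k t^\alpha)\phi_k$, so that for any exponent $s$ the squared norms split as $\sum_k \lambda_k^s\snm{v_k}^2\,\Phi_k$, with $\Phi_k$ a scalar functional of $t\mapsto E_{\alpha,1}(-\lambda_k t^\alpha)$. Since $\nm{v}_{\dot H^{2r}(\Omega)}^2=\sum_k\lambda_k^{2r}\snm{v_k}^2$, it suffices to prove $\lambda_k^{s-2r}\Phi_k\leqslant C$ uniformly in $k$, with $C$ of the advertised dependence (including the $\epsilon^{-1}$ blow-ups on the squared norms). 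Throughout I would use the growth bound $\snm{E_{\alpha,1}(-z)}\leqslant C_\alpha/(1+z)$ together with the asymptotic expansion quoted after \cref{eq:ml}, which give $E_{\alpha,1}(-\tau^\alpha)\sim c\,\tau^{-\alpha}$ and, through $\tfrac{\mathrm{d}}{\mathrm{d}t}E_{\alpha,1}(-\lambda t^\alpha)=-\lambda t^{\alpha-1}E_{\alpha,\alpha}(-\lambda t^\alpha)$, the companion bound $\bsnm{\tfrac{\mathrm{d}}{\mathrm{d}\tau}E_{\alpha,1}(-\tau^\alpha)}\sim c'\,\tau^{-\alpha-1}$ as $\tau\to\infty$.

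For the three $L^2(0,T;\dot H^s(\Omega))$ estimates I would take $\Phi_k=\int_0^T\snm{E_{\alpha,1}(-\lambda_k t^\alpha)}^2\,\mathrm{d}t$ and split the integral at $t^\ast:=\lambda_k^{-1/\alpha}$. On $(0,t^\ast)$ the integrand is $O(1)$, contributing $O(\lambda_k^{-1/\alpha})$; on $(t^\ast,T)$ the decay $E_{\alpha,1}(-\lambda_k t^\alpha)\sim(\lambda_k t^\alpha)^{-1}$ bounds it by $C\lambda_k^{-2}\int_{t^\ast}^T t^{-2\alpha}\,\mathrm{d}t$. This last integral converges (dominated by the endpoint $T$) when $\alpha<1/2$, diverges like $\ln(T/t^\ast)\sim\ln\lambda_k$ when $\alpha=1/2$, and is dominated by its lower endpoint $t^\ast$ (giving $C\lambda_k^{2-1/\alpha}$) when $\alpha>1/2$. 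Hence $\Phi_k=O(\lambda_k^{-2})$, $O(\lambda_k^{-2}\ln\lambda_k)$, $O(\lambda_k^{-1/\alpha})$ respectively, i.e.\ the admissible exponents $s=2+2r$, $s=2+2r-\epsilon$, $s=2r+1/\alpha$. The factor $(\lambda_k t^\alpha)^{-1}$ is exactly one power of $\Delta$, which is why $s$ cannot exceed $2+2r$. At $\alpha=1/2$ the $\epsilon$-loss comes from $\sup_{\lambda\geqslant\lambda_1}\lambda^{-\epsilon}\ln\lambda\lesssim\epsilon^{-1}$, which after taking square roots yields the stated $\epsilon^{-1/2}$.

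For the three ${}_0H^{\alpha/2}(0,T;\dot H^s(\Omega))$ estimates I would first note that $\alpha<1$ forces $(\alpha/2)\cdot 2<1$, so by the first remark in \cref{sec:pre} this space equals $H^{\alpha/2}(0,T;\dot H^s(\Omega))$ and its Sobolev--Slobodeckij seminorm (treated as in \cite[Lemma 16.3]{Tartar2007}, as later in \cref{lem:I-Qtau}) may be used; the $L^2$ part is already controlled above, so the new content is the seminorm. Rescaling by $\tau=\lambda_k^{1/\alpha}t$ reduces it to $\lambda_k^{1-1/\alpha}G(\lambda_k^{1/\alpha}T)$, where $G(X):=\int_0^X\!\!\int_0^X\snm{F(\tau)-F(\rho)}^2\snm{\tau-\rho}^{-1-\alpha}\,\mathrm{d}\tau\,\mathrm{d}\rho$ and $F(\tau):=E_{\alpha,1}(-\tau^\alpha)$. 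The near-origin part of $G$ is bounded uniformly in $X$ because $\tau\mapsto\tau^\alpha\in H^{\alpha/2}(0,1)$; the decisive contribution is at large scales, where a dyadic estimate over $\tau,\rho\sim R$ separated by $\sim R$ gives, via $F\sim c\,\tau^{-\alpha}$, a per-scale size $\sim R^{1-3\alpha}$. Summing dyadically yields $G(X)=O(1)$ for $\alpha>1/3$, $O(\ln X)$ for $\alpha=1/3$, and $O(X^{1-3\alpha})$ for $\alpha<1/3$, which is precisely the transition at $\alpha=1/3$. Feeding this back, the seminorm forces $s\leqslant 2r+1/\alpha-1$ when $\alpha>1/3$ and $s\leqslant 2+2r$ when $\alpha\leqslant 1/3$; intersecting with the $L^2$ ceiling $s\leqslant 2+2r$ produces $\dot H^{1/\alpha+2r-1}(\Omega)$ for $\alpha>1/3$, $\dot H^{2+2r}(\Omega)$ for $\alpha<1/3$, and the borderline $\dot H^{2+2r-\epsilon}(\Omega)$ with the $\epsilon^{-1/2}$ factor (again from $\sup_\lambda\lambda^{-\epsilon}\ln\lambda\lesssim\epsilon^{-1}$) at $\alpha=1/3$.

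The main obstacle is the sharp growth law for $G(X)$, i.e.\ the fractional-in-time seminorm of the Mittag-Leffler profile over the expanding intervals $(0,\lambda_k^{1/\alpha}T)$; this is where the exponent $1-3\alpha$ and the critical value $\alpha=1/3$ originate. The delicate points are (i) separating the diagonal contribution (controlled through $F'\sim c'\tau^{-\alpha-1}$) from the off-diagonal large-scale contribution (controlled through $F\sim c\tau^{-\alpha}$), since a crude Lipschitz bound fails near the non-smooth point $\tau=0$; and (ii) tracking uniformity in $k$ together with the $T$-dependence hidden in $X=\lambda_k^{1/\alpha}T$, while checking that the bounded-below smallest eigenvalues enter only through the constant. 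Everything else reduces to the elementary one-dimensional integrals above.
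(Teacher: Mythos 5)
Your proposal is correct and takes essentially the same route as the paper: the paper derives \cref{lem:regu2} from the eigenfunction expansion \cref{eq:SDalpha_v} together with the quoted Mittag-Leffler growth bound and asymptotic expansion, \cite[Lemma 3.4]{Luo2019} and the techniques of \cite{mclean2010regularity}, which is exactly the spectral diagonalization plus uniform-in-$k$ scalar estimates (with the thresholds at $\alpha=1/3$ and $\alpha=1/2$ and the $\epsilon^{-1/2}$ endpoint losses) that you reconstruct. The only difference is one of presentation: you carry out explicitly, via the rescaling $\tau=\lambda_k^{1/\alpha}t$ and dyadic summation for $G(X)$, the computations that the paper delegates to the cited references.
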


\end{document}